\newtheorem{theorem}{Theorem}[section]
\newtheorem{definition}{Definition}[section]
\newtheorem{proposition}{Proposition}[section]
\newtheorem{remark}{Remark}[section]
\newtheorem{corollary}{Corollary}[section]
\newtheorem{lemma}{Lemma}[section]
\providecommand{\keywords}[1]
{ \small \textbf{\textit{Keywords---}}#1 }
\title{IGA Laplace Eigenfrequencies Distributions and Estimations: Impact of Reparametrization on Eigenfrequency Behavior}
\author[$\;$]{}
\affil[$\;$]{ {\large Noureddine LAMSAHEL}}
\affil[$\;$]{ {\scriptsize  Mohammed VI Polytechnic University, The UM6P Vanguard Center, Benguerir 43150, Lot 660, Hay Moulay Rachid, Morocco.}}
\affil[$\;$]{ {\scriptsize  University of Littoral C\^ote d'Opale, Laboratory of Pure and Applied Mathematics, 50 Rue F. Buisson, 62228 Calais-Cedex, France.}}
\affil[$\;$]{ {\scriptsize Emails: noureddine.lamsahel@um6p.ma, noureddine.lamsahel@etu.univ-littoral.fr  }}
\affil[$\;$]{ {\large }}
\affil[$\;$]{ {\large Abdeladim EL AKRI}}
\affil[$\;$]{ {\scriptsize   Mohammed VI Polytechnic University, The UM6P Vanguard Center, Benguerir 43150, Lot 660, Hay Moulay Rachid, Morocco.}}
\affil[$\;$]{ {\scriptsize Email: Abdeladim.ElAkri@um6p.ma}}
\affil[$\;$]{ {\large }}
\affil[$\;$]{ {\large Ahmed RATNANI}}
\affil[$\;$]{ {\scriptsize  Mohammed VI Polytechnic University, The UM6P Vanguard Center, Benguerir 43150, Lot 660, Hay Moulay Rachid, Morocco.}}
\affil[$\;$]{ {\scriptsize Email: Ahmed.RATNANI@um6p.ma}}
\affil[$\;$]{ {\large }}
\date{}
\begin{document}

\maketitle

\begin{abstract}
This work addresses the Galerkin isogeometric discretization of the one-dimensional Laplace eigenvalue problem subject to homogeneous Dirichlet boundary conditions on a bounded interval. We employ GLT theory to analyze the behavior of the eigenfrequencies when a reparametrization is applied to the computational domain. Under suitable assumptions on the reparametrization transformation, we prove that a structured pattern emerges in the distribution of eigenfrequencies when the problem is reframed through GLT-symbol analysis. Additionally, we establish results that refine and extend those of \cite{bianchi2021analysis}, including a uniform discrete Weyl's law. Furthermore, we derive several eigenfrequency estimates by establishing that the symbol exhibits asymptotically linear behavior near zero. 
\end{abstract}

\noindent\keywords{Laplace operator, Isogeometric Galerkin discretization, Reparametrization,  GLT theory, eigenfrequencies, Behavior,  Spectral symbol.}

\section{Introduction}\label{sec1}
Isogeometric Analysis (IGA), introduced in \cite{hughes2005isogeometric}, is a powerful paradigm for analyzing and approximating partial differential equations \cite{cottrell2009isogeometric}. The main purpose of IGA is to bridge computer-aided design (CAD) and finite element analysis (FEA), see \cite{cottrell2009isogeometric,hughes2005isogeometric}. This connection gives IGA an advantage over classical (FEA). In particular, the high smoothness of B-spline and NURBS basis functions enables higher accuracy per degree of freedom \cite{bressan2019approximation,sande2019sharp,sande2020explicit} and provides a better description of the spectrum of the involved operator compared to classical (FEA), see \cite{cottrell2006isogeometric,hughes2014finite,hughes2008duality}.

Several authors have considered the analysis of the IGA Laplace operator problem as a foundational study for solving and investigating IGA problems; see, for instance, \cite{cottrell2006isogeometric,ekstrom2018eigenvalues,garoni2019symbol,manni2022application,reali2006isogeometric}. From the works \cite{sande2019sharp,sande2020explicit}, the authors deduce that B-splines of maximal smoothness on uniform grids are preferable for addressing the Laplace eigenvalue problem. However, despite this accurate description and estimation of the spectrum, there are always a few eigenvalues, called outliers \cite{cottrell2006isogeometric}, that are poorly approximated, with their corresponding values being significantly larger than the exact ones. In \cite{manni2022application}, the authors overcame this problem by constructing subspaces of the full IGA spaces subject to the three standard boundary conditions and proving that these spaces filter out the outliers while yielding the optimal approximation error. Compared to other outlier-free spaces, such as reduced spaces \cite{hiemstra2021removal}, the authors in \cite{manni2022application} provide a mathematical confirmation that guarantees the outlier-free property of these spaces.

Although optimal spline subspaces \cite{manni2022application} are the best spaces for addressing the Laplace eigenvalue problem, the construction of these spaces depends on selective uniform grids that vary according to the chosen boundary conditions and the parity of the interpolation degree. However, some applications require the use of non-uniform grids. In particular, for the uniform boundary observability of the wave equation, it is widely acknowledged that almost all classical discretizations of the $1d$ wave equation using uniform grids result in the divergence of the discrete controls; see \cite{glowinski1995exact,infante1999boundary} and the survey paper \cite{zuazua2005propagation}. In this context, the authors in \cite{ervedoza2016numerical} propose restoring the uniform observability property by utilizing non-uniform grids. The main idea involves constructing various non-uniform finite-difference and finite-element semi-discretizations of the wave equation through the use of an appropriate concave diffeomorphism,  referred to as a reparametrization. It is important to mention that in \cite{ervedoza2016numerical}, the authors avoided addressing the uniform gap property related to the eigenfrequency of the discretized Laplace operator, even though, in general, this condition simplifies the study. This omission was due to the complexity of deriving information about the eigenfrequencies (or eigenvalues); thus, this property was identified only through numerical analysis. The first theoretical investigation of the uniform gap condition in this particular direction is attributed to the work \cite{bianchi2018spectral}, which demonstrated that it is indeed possible to derive information about the asymptotic distribution of the eigenfrequencies. Moreover, it was proved that using concave or convex reparametrizations of the domain results in a specific behavior of these eigenfrequencies, named the average gap property, which is a necessary condition for the uniform gap property, as established in \cite{bianchi2018spectral}. Furthermore, the study in \cite{bianchi2018spectral} is not confined to classical discretization, as in \cite{ervedoza2016numerical}, but deals with various schemes, including IGA. However, it should be mentioned that this average gap condition is insufficient to establish the wave equation's uniform boundary observability.

The main issue in both \cite{ervedoza2016numerical} and \cite{bianchi2018spectral} is the lack of information on the eigenfrequency spectrum of the discretized Laplace operator when using reparametrization. In this work, we address the following question: \textsl{How much information can be derived about the eigenfrequency spectrum when using a strictly concave or strictly convex reparametrization of the domain?} As in \cite{bianchi2018spectral}, we use Generalized Locally Toeplitz (GLT) theory to analyze the impact of these reparametrizations on the eigenfrequency distributions. The GLT theory was initially introduced in \cite{tilli1998locally} and further developed in \cite{capizzano2003generalized,serra2006glt}. The main advantage of GLT theory lies in its capacity to provide a general conceptual background for addressing problems within a broader context. Additionally, it can help determine whether a discretization method effectively approximates the spectrum; see \cite{garoni2019symbol,bianchi2021analysis}. Equipped with this theory and building on the results in \cite{bianchi2021analysis}, we illustrate the potential to gain valuable and previously undiscovered insights into the relationship between these specific reparametrizations and eigenfrequency behavior. Furthermore, we obtain more favorable outcomes than those in \cite{bianchi2021analysis}, including a uniform convergence version of \cite[Theorem 3.1]{bianchi2021analysis} and various new estimations of the eigenfrequencies.

\subsection{Main results}
Let \( p,n\in\mathbb{N}^* \), and \( N=n+p-2 \). We define the following admissible set of reparametrizations:  
\[
\mathbf{C}_{[0,1]}=\left\{ \phi\in\mathcal{C}^2([0,1]):\; \phi^{'}>0,\;[\phi^{''}>0 \text{ or }\phi^{''}<0],\; \phi(0)=0 \text{ and } \phi(1)=1       \right\}.
\]
We denote by \( L_{\phi,n}^p \) the IGA matrix of size \( N \) that approximates the one-dimensional Laplace eigenvalue problem under homogeneous Dirichlet boundary conditions \eqref{pd1}. Let \( \phi \in  \mathbf{C}_{[0,1]} \), we introduce the following functions:  
\[
    \Psi_{\phi}^{p}(y)=\mu_{2} \left( \left\{  (x,\theta)\in [0,1]\times[0,\pi]:\;\; \sqrt{\omega_{\phi}^p(x,\theta)}\leq y    \right\} \right),\;\;\forall y\in Rg\left({\sqrt{\omega_{\phi}^p}}\right),
\]
 and (see \cite{bianchi2018spectral} or Proposition \ref{stefa} below)  
\[
  \sqrt{\xi_{\phi}^p}(x)=\left(\Psi_{\phi}^{p}\right)^{-1}(\pi x),\quad \forall x\in[0,1],  
\]
where $\sqrt{\xi^p_{\phi}}$ is the GLT spectral symbol of $\sqrt{n^{-2}L_{\phi,n}^p}$ according to Definition \ref{defspectralsymbol}, see \cite{bianchi2018spectral}. Namely, we have 
\[
\sqrt{n^{-2}L_{\phi,n}^p}\sim_{\lambda}\sqrt{\xi^p_{\phi}}.
\]
Here, \( \mu_{2} \) denotes the Lebesgue measure on \( \mathbb{R}^{2} \), \( Rg \) represents the range of a function, and the function \( \omega^p_{\phi} \) is given by \eqref{eq:spectralsymbo-full-matrix}. Then, our main results can be summarized as follows:  

\begin{enumerate}
\item  We have established that the spectral symbol of the eigenfrequency exhibits a linear behavior near zero. For all \( \phi \) in \( \mathbf{C}_{[0,1]} \), Theorem \ref{symbolnear0} reveals that  
   \[
   \sqrt{\xi_{\phi}^p}(x)\sim_{0^+} \gamma x,
   \]
   where the constant \( \gamma \) is given by \eqref{gammadif}.

\item Based on \cite{bianchi2021analysis}, we derive Lemma \ref{approximationwithoutout}, which allows for the approximation of the eigenfrequency spectrum without outliers. We then use this lemma and the regularity of \( \Psi^p_{\phi} \) to establish an ordering relation, as illustrated in Theorem \ref{tdis1}. Specifically, for two distinct reparametrizations \( \phi_1 \) and \( \phi_2 \) in the admissible set \( \mathbf{C}_{[0,1]} \), and for any closed interval \( I \) in  
   \[
   Rg\left(\sqrt{\omega_{\phi_1}^p}\right) \cap Rg\left(\sqrt{\omega_{\phi_2}^p}\right),
   \]
   such that  
   \[
   \Psi_{\phi_1}^{p}(y)>\Psi_{\phi_2}^{p}(y),\quad \forall y\in  I,
   \]
   we can order the families of eigenfrequencies within \( I \). More precisely, let \( n \) be large enough, then for any \( k\in\{1,2,\dots,N\} \), such that  
   \[
   \sqrt{n^{-2}\lambda^{\phi_1}_{k,h}},\sqrt{n^{-2}\lambda^{\phi_2}_{k,h}}\in I,
   \]
   we have  
   \[
   \sqrt{\lambda_{k,h}^{\phi_1}}<\sqrt{\lambda_{k,h}^{\phi_2}}.
   \]

\item  We illustrate how the convexity of \( \Psi_{\phi}^p \) impacts the distribution of the eigenfrequency at a macro level. Theorem \ref{cl2.5} demonstrates that if \( \Psi_{\phi}^p \) is strictly convex on an interval, then all eigenfrequencies within this interval are shifted to its right. Conversely, if \( \Psi_{\phi}^p \) is strictly concave, then the eigenfrequencies concentrate on the left side of the interval.

\item We proved that the simple convergence in discrete Weyl's law, as presented in \cite[Theorem 3.1]{bianchi2021analysis}, is in fact uniform for all reparametrizations \( \phi \) in \( \mathbf{C}_{[0,1]} \). Specifically, Theorem \ref{gedis} indicates that the sequence of functions  
   \[
   G_n^p(y)= \dfrac{\left| \left\{k=1,\dots, N:\quad \sqrt{n^{-2}\lambda_{k,h}^{\phi}}\leq y\right\}\right|}{N+1},\quad \forall y\in Rg\left(\sqrt{\omega_{\phi}^p}\right),
   \]
   converges uniformly to  \( \frac{1}{\pi}\Psi^p_{\phi} \).
   
\item We leverage the asymptotic linearity of the symbol \( \sqrt{\xi^p_{\phi}} \) near zero to derive several eigenfrequency estimates, namely,  statement \( (ii) \) in Corollary \ref{equivalenceimportant}, \eqref{improved1} in Corollary \ref{improvedfrequincyestimation}, and \eqref{improved2} in Corollary \ref{improvedf}.
\end{enumerate}

In the study of the link between the reparametrization set \( \mathbf{C}_{[0,1]} \) and the distribution of eigenfrequency families, we constructed an infinite subset of \( \mathbf{C}_{[0,1]} \) formed by the reparametrizations defined in \eqref{eq:reparametrization-convex-example} and \eqref{eq:reparametrization-concave-example},  which enable the application of Theorems \ref{dis1} and \ref{dis2} to establish the ordering condition \eqref{hypoonordeing} in Theorem \ref{tdis1},  thereby ordering the families of eigenfrequencies. Furthermore, in the  special case where \( p=1 \), Proposition \ref{propc}, together with Theorem \ref{cl2.5}, indicates that the eigenfrequencies are shifted to the left within the interval  
\[
\left(0,\frac{\sqrt{6}}{\max\{\phi'(0),\phi'(1)\}}\right)
\]
for all \( \phi \) in \( \mathbf{C}_{[0,1]} \).

\subsection{Outline of the Paper}   

In Section \ref{sec:preliminaries}, we introduce the necessary notations, definitions, and preliminary results relevant to our analysis. Specifically, we start by defining  B-spline basis functions and briefly deriving the Galerkin Isogeometric Analysis (IGA) discretization for the one-dimensional Laplace eigenvalue  
problem. In  Subsection \ref{sec:preliminariesGLT} of this section, we provide an overview of the essential results from the abstract Generalized Locally Toeplitz (GLT) theory. At the end of this section, we present the GLT symbol for our specific discretization,  followed by the spectral symbol of the discretized Laplace eigenfrequency.  

In Section \ref{sec3:symbolanalysis}, we analyze the eigenfrequency symbol in a neighborhood of zero. We demonstrate that when applying a strictly convex or strictly concave reparametrization to the domain \( [0,1] \), the resulting symbol is \( C^2 \)-regular and strictly convex near zero. Additionally, it exhibits  linear behavior in this region.  

In Section \ref{sec4:impactofmapping}, we use GLT theory to examine the impact of reparametrization and symbol convexity on the distribution of eigenfrequencies. First, we establish Lemma \ref{approximationwithoutout}, which demonstrates the feasibility of approximating non-outlier eigenfrequencies by uniformly sampling the symbol. We then leverage this lemma, along with the regularity of the symbol’s inverse (see \cite{bianchi2018spectral} or Proposition \ref{stefa} below), to establish an ordering relation between families of eigenfrequencies generated by different reparametrizations. We conclude the section by establishing a shifting property for eigenfrequencies, which arises from the symbol's convexity.  

In Section \ref{improvedestimation}, we present an improvement over certain results in \cite{bianchi2021analysis}, specifically demonstrating a uniform version of discrete Weyl’s law. This result shows how simple convergence becomes uniform under a carefully chosen reparametrization. Additionally, we leverage the linearity of the symbol near zero to derive new, more accurate estimates of the eigenfrequencies via symbol sampling.  

Finally, Section \ref{sec:conclusions} summarizes the main findings of the paper and discusses possible future directions.

\section{Preliminaries}\label{sec:preliminaries}
Consider the following one-dimensional Laplace eigenvalue problem with homogeneous Dirichlet boundary conditions:
\begin{equation}\label{pd1}
\left\{
\begin{array}{ll}
 -\partial_{xx}u=\lambda u,\quad x\in (0,1),\\
 u(0)=u(1)=0.
\end{array}
\right.
\end{equation}
It is well-known that the system (\ref{pd1})  has a set of exact, non-trivial solutions given by $\lambda_k = \left( k\pi\right)^2$, $u_k(x) = \sin(k\pi x)$, for $k \in \mathbb{N}^*$. Here, $\lambda_k$ is the $k$-th eigenvalue of the operator $-\partial_{xx}$ with Dirichlet boundary conditions, and $u_k$ is the corresponding eigenfunction.  

This paper investigates the behavior of the IGA approximation of eigenfrequencies $(\sqrt{\lambda_k})_{k\geq1}$ when using strictly convex or concave reparametrizations to create a non-uniform spatial mesh. Specifically, we aim to understand how these mappings affect the distribution of the approximate eigenfrequencies and their estimation using uniform sampling of the IGA spectral symbol. The necessary background for our analysis is provided in this section, starting with the IGA Galerkin discretization of \eqref{pd1} in Subsection \ref{sec:descritization}, followed by a brief overview of GLT theory in Subsection \ref{sec:preliminariesGLT}.  Finally, we present the GLT symbol of the IGA eigenfrequencies in Subsection \ref{symbol}.

\subsection{Galerkin B-spline IGA Discretization}\label{sec:descritization}
System \eqref{pd1}, employing $B$-spline functions \cite{de1978practical}, involves discretizing the weak form of problem \eqref{pd1} stated as follows: for $k \geq 1$, find $u_k \in H^1_0(0,1)$ and $\lambda_k \in \mathbb{R}_+$ such that
\begin{equation}\label{pd2}
A(u_k,v)=\lambda_k \;L(u_k,v),\quad \forall v \in H^1_0(0,1),
\end{equation}
where 
$$ A(u_k,v)=\displaystyle\int_0^1u_k'(x)\;v'(x) \, dx,\;\;\text{and}\;\; L(u_k,v)=\displaystyle\int_0^1 u_k(x)\;v(x) \, dx. $$
The next step involves constructing a finite-dimensional subspace to approximate the solution space $H^1_0(0,1)$. This subspace is defined by a finite set of basis functions. In the standard IGA discretization, these functions are constructed using $B$-spline functions.

We consider non-periodic and uniform knot vectors of the form
$$
(t_j)_{0 \leq j \leq 2p+n}=\left( \displaystyle\underbrace{0\cdots0}_{p+1},\dfrac{1}{n},\dfrac{2}{n},\cdots,\dfrac{n-1}{n},\displaystyle\underbrace{1\cdots1}_{p+1} \right),
$$
where $n, p \in \mathbb{N}^*$. The $B$-spline functions of degree $p$ on these knots are defined recursively as follows (for instance, see \cite{de1978practical}): for $1 \leq k \leq p$,
\begin{equation}\label{eq:definition-B-spline-1}
N_j^k(t)=\dfrac{t-t_j}{t_{j+k}-t_{j}}N_j^{k-1}(t) +  \dfrac{t_{j+k+1}-t}{t_{j+k+1}-t_{j+1}}N_{j+1}^{k-1}(t),\quad\text{for}\;\;  0 \leq j \leq 2p+n-1-k,   
\end{equation}
with
\begin{equation}\label{eq:definition-B-spline-2}
N_j^0(t)=\mathcal{X}_{[t_j,t_{j+1})}(t),\quad\text{for}\;\;0 \leq j \leq 2p+n-1.
\end{equation}
Here, $p+n$ is the number of $B$-spline functions, and in cases where a fraction has a zero denominator, we assume it to be zero. We can then define the Schoenberg space
\begin{equation}\label{eq:Schoenberg-space}
\mathbb{S}^{p} = span \left\{ N_j^p: \; j=0, \dots,p+n-1 \right\}.
\end{equation}
In classical spline approximation theory (see \cite{de1978practical}), it is well-known that  $\mathbb{S}^{p}$ coincides with the space of splines of degree $p$ and smoothness $p-1$, namely
$$
\mathbb{S}^{p} = \left\{ s \in \mathcal{C}^{p-1}([0,1]),\,\,s|_{\left[ i/n, \, (i+1)/n \right)} \in \mathbb{P}_{p},\;\; i=0,\dots,n-1 \right\},
$$ 
where $\mathbb{P}_{p}$ denotes the space of polynomials of degree at most $p$.

The isogeometric approximation of $H^1_0([0,1])$ is given by 
\begin{equation}\label{eq:Schoenberg-Dirichlet-space}
\mathbb{S}^{p}_0 = \left\{ s \in \mathcal{C}^{p-1}([0,1]),\,\,s|_{\left[ i/n, \, (i+1)/n \right)} \in \mathbb{P}_{p},\;\;\text{and}\;\;s(0)=s(1)=0,\;\; i=0,\dots,n-1 \right\}.
\end{equation} 
Then, our discrete solutions  $(u_{k,h}, \lambda_{k,h}) \in \mathbb{S}^{p}_0 \times \mathbb{R}_+$ satisfy the approximate weak formulation
\begin{equation}\label{eq:discrete-weak-form}
A(u_{k,h},v_h)=\lambda_{k,h} \;L(u_{k,h},v_h),\quad \forall v_h \in \mathbb{S}^{p}_0,
\end{equation}
where $h$ refers to the discretization parameter defined by $h=1/n$. We use the standard basis for $\mathbb{S}^{p}_0$ formed by the $B$-spline functions $\{N_{1}^p, \cdots, N_{p+n-2}^p\}$ that vanish at the boundary. 
Equation \eqref{eq:discrete-weak-form} can be expressed as a finite-dimensional eigenvalue problem
\begin{equation*}
\left[\left( M_n^p\right)^{-1} K_n^p \right] \mathbf{u}_{k,h} = \lambda_{k,h} \mathbf{u}_{k,h},
\end{equation*}
where $\mathbf{u}_{k,h}$ is the coefficients vector of $u_{k,h}$ with respect to the  basis $\{N_{1}^p, \cdots, N_{p+n-2}^p\}$ and $M_n^p$ and $K_n^p$ are the mass and stiffness matrices, respectively 
\begin{equation*}
(M_n^p)_{i,j}= \int_0^1 N_i^p(x) N_j^p(x) \, dx,\quad(K_n^p)_{i,j} = \int_0^1 (N_i^p)'(x) (N_j^p)'(x) \, dx,
\end{equation*} 
for  $1 \leq i,j \leq p+n-2.$

As previously stated, this work aims to derive insights into the approximate eigenfrequencies when applying strictly convex or concave reparametrizations of the domain $[0,1]$. For the case without reparametrization, we refer the reader to \cite{ekstrom2018eigenvalues}. We then introduce the following test space:
\begin{equation}\label{eq:reparametrization-space}
\mathbf{C}_{[0,1]}=\left\{ \phi \in \mathcal{C}^2([0,1]):\; \phi^{'} > 0,\;[\phi^{''} > 0 \text{ or }\phi^{''} < 0],\; \phi(0) = 0 \text{ and } \phi(1) = 1 \right\}.
\end{equation}
For $\phi \in \mathbf{C}_{[0,1]} $, we define the following basis functions by pullback under transformation $\phi$:
$$
B_j^p = N_j^p \circ \phi^{-1}, \quad \text{for } 1 \leq j \leq p+n-2,
$$
and aim to approximate the exact eigenpairs $(u_k, \lambda_k)$ using the standard Galerkin method, where the discrete solution space is given by $span \{B_1^p, \cdots, B_{p+n-2}^p \}$. Simple computations lead to the following expressions for the mass and stiffness matrices:
\begin{equation}\label{eq:mass-matrix-dif}
(M_{\phi, n}^p)_{i,j}= \int_0^1 \left| \phi'(x) \right| N_i^p(x) N_j^p(x) \, dx, \quad \text{for } 1 \leq i,j \leq p+n-2.
\end{equation}
\begin{equation}\label{eq:stiffness-matrix-dif}
(K_{\phi,n}^p)_{i,j} = \int_0^1 \frac{1}{\left| \phi'(x) \right|} (N_i^p)'(x) (N_j^p)'(x) \, dx, \quad \text{for } 1 \leq i,j \leq p+n-2.
\end{equation}
The numerical eigenvalue problem in this case is described by:
\begin{equation*}
L_{\phi,n}^p \mathbf{u}_{k,h}^{\phi} = \lambda_{k,h}^{\phi} \mathbf{u}_{k,h}^{\phi},
\end{equation*}
where $L_{\phi,n}^p=\left( M_{\phi,n}^p\right)^{-1} K_{\phi,n}^p.$

\subsection{Preliminaries on GLT Sequences}\label{sec:preliminariesGLT}

This subsection provides a brief overview of the essential background on the Generalized Locally Toeplitz (GLT) sequences theory. More details can be found in the pioneering works \cite{bianchi2021analysis,garoni2017generalized}, and the references therein.  
In what follows, $(L_{n})_{n\in\mathbb{N}^{*}}$ represents a sequence of matrices of  size  $N=N(n)$, with $N \rightarrow +\infty$  as $n \rightarrow +\infty$. Furthermore, the eigenvalues of each matrix $L_n$, denoted by  $(\lambda_k(L_n))$, are assumed to be real, positive, and sorted in increasing order, specifically:  
$$0<\lambda_1(L_n)<\lambda_2(L_n)<\cdots<\lambda_N(L_n).$$  

In the current subsection, all the definitions and results have been adjusted to better align with our particular context, including the following definition of the spectral symbol.  
\begin{definition}[Spectral symbol]\label{defspectralsymbol}  
Let $\mathcal{C}_c(\mathbb{R})$ be the set of continuous functions with compact support over $\mathbb{R}$, and let $\omega :[0,1]\times[0,\pi] \longrightarrow \mathbb{R}$ be a measurable function. We say that $(L_{n})_{n\in\mathbb{N}^{*}}$ has a spectral (or eigenvalue) distribution described by $\omega$, and we write  
$$  
(L_{n})_{n\in\mathbb{N}^{*}}\sim_{\lambda} \omega,  
$$  
if for all $F \in \mathcal{C}_c(\mathbb{R})$ we have  
\begin{equation}\label{discretsspecralrelation}  
    \lim_{N\longrightarrow + \infty}\dfrac{1}{N}\displaystyle\sum_{k=1}^N F(\lambda_k(L_n))=\dfrac{1}{\pi}\displaystyle\int_{[0,1]\times[0,\pi]} F( \omega(x,\theta)) \,dx\,d\theta,  
\end{equation}  
where $\lambda_k(L_n)$, $k=1, \cdots, N$ are the eigenvalues of $L_n$. In this case, $\omega$ is referred to as the spectral symbol of $(L_{n})_{n\in\mathbb{N}^*}$.  
\end{definition}  
For insights into the largest set of test functions $F$ for which \eqref{discretsspecralrelation} holds, we refer to \cite{bianchi2024asymptotic}.  

With the chosen discretization in Subsection \ref{sec:descritization}, whether or not reparametrization is employed, an issue arises concerning outliers. A few eigenvalues are poorly approximated by the uniform sampling of the symbol $\omega$, and their corresponding values are notably larger than the exact values. The following definition  provides  the mathematical definition of these outliers.  
\begin{definition}[Outliers]  
Let $(L_{n})_{n\in\mathbb{N}^*}\sim_{\lambda} \omega$, and let $R_\omega$ represent the essential range of $\omega$, defined as   
\begin{equation}\label{eq:essential-range}  
R_{\omega} = \left\{ y \in \mathbb{R}:\; \mu_{2}\left(\{(x,\theta)\in [0,1]\times[0,\pi],\;\; |\omega(x,\theta)-y|<\epsilon      \}\right)> 0, \; \forall \epsilon>0 \right\},  
\end{equation}  
where $\mu_{2}$ denotes the Lebesgue measure on $\mathbb{R}^{2}$. An eigenvalue $\lambda_k(L_n)$ is considered an outlier if $\lambda_k(L_n) \notin  R_{\omega}$.  
\end{definition}  

It can be proved that $ R_{\omega}$ is a closed set (\cite[Lemma 2.1]{garoni2017generalized}). Furthermore, if the function $\omega$ is continuous  and  since the domain $[0,1]\times[0,\pi]$ is compact, we can demonstrate that the essential range of $\omega$ coincides with the image of $\omega$.  

In general, and specifically in our case, the symbol is defined on a multidimensional domain, which complicates the mathematical and numerical study of the distribution of the eigenvalues. However, we can  derive  a new one-dimensional symbol from the original symbol, as explained  in  the following  
\begin{definition}[Monotone rearrangement of the symbol]\label{def:monotone-rearrangement}  
Let $\omega: [0,1] \times [0,\pi] \longrightarrow \mathbb{R}$ be a measurable function such that $(L_n)_{n\in\mathbb{N}^*} \sim_{\lambda} \omega$. We assume that the essential range of $\omega$ is bounded. The extension function $\xi: [0,1] \longrightarrow R_{\omega}$  is  defined by  
$$  
\xi(x)=\inf\left\{ y\in  R_{\omega}:\;\; \Psi(y)>\pi  x  \right\},  
\quad \forall x\in(0,1),$$  
where   
\begin{equation}\label{eq:function-psi}  
\Psi(y)=\mu_{2}(\{  (x,\theta)\in [0,1]\times[0,\pi],\;\; \omega(x,\theta)\leq y    \}),\quad \forall y\in \mathbb{R},  
\end{equation}  
is called the monotone rearrangement of $\omega$.  
\end{definition}  

The following result \cite{bianchi2021analysis} shows that the monotone rearrangement of a symbol remains a spectral symbol for the same sequence of matrices.  

\begin{proposition}  
Let $(L_n)_{n\in\mathbb{N}^*} \sim_{\lambda} \omega$ with $\omega: [0,1] \times [0,\pi] \longrightarrow \mathbb{R}$  having a bounded essential range. Let $\xi$ be the monotone rearrangement of $\omega$. Then, we have  
$$   
(L_{n})_{n\in\mathbb{N}^{*}}\sim_{\lambda}\xi.  
$$  
\end{proposition}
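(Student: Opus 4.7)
The plan is to reduce the claim to a distributional identity between $\omega$ and $\xi$. Interpreting $\xi$ as a function on $[0,1]\times[0,\pi]$ that is constant in $\theta$, Definition \ref{defspectralsymbol} asks for
$$\lim_{N\to\infty}\frac{1}{N}\sum_{k=1}^N F(\lambda_k(L_n))\;=\;\int_0^1 F(\xi(x))\,dx,\qquad \forall F\in\mathcal{C}_c(\mathbb{R}).$$
Since the hypothesis $(L_n)_{n\in\mathbb{N}^*}\sim_\lambda\omega$ already delivers the same limit with right-hand side $\pi^{-1}\int_{[0,1]\times[0,\pi]} F(\omega)\,dx\,d\theta$, the whole statement reduces to proving the integral identity
$$\int_0^1 F(\xi(x))\,dx \;=\; \frac{1}{\pi}\int_{[0,1]\times[0,\pi]} F(\omega(x,\theta))\,dx\,d\theta,\qquad \forall F\in\mathcal{C}_c(\mathbb{R}). \qquad (\star)$$

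To establish $(\star)$, I would view both sides as integrals against pushforward Borel probability measures on $\mathbb{R}$: let $\mu_\omega$ denote the pushforward of $\pi^{-1}\,dx\,d\theta$ by $\omega$, and $\mu_\xi$ the pushforward of Lebesgue measure on $[0,1]$ by $\xi$. Both are finite and supported in the compact set $R_\omega$ (bounded by hypothesis), and $(\star)$ is equivalent to $\mu_\omega=\mu_\xi$ by Riesz--Markov. Matching two finite Borel measures on $\mathbb{R}$ then reduces to matching their cumulative distribution functions at every $y\in\mathbb{R}$. From \eqref{eq:function-psi} one immediately reads $\mu_\omega((-\infty,y])=\pi^{-1}\Psi(y)$, so the entire remaining task is to show that
$$\mu_\xi((-\infty,y])\;=\;\big|\{x\in(0,1):\xi(x)\leq y\}\big|\;=\;\pi^{-1}\Psi(y),\qquad\forall y\in\mathbb{R}.$$

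For this last step I would exploit two structural properties of $\Psi$: it is non-decreasing and right-continuous (the latter by monotone convergence applied to $\mathcal{X}_{\{\omega\leq y+\epsilon\}}$ as $\epsilon\downarrow 0$), and it is constant on every connected component of $\mathbb{R}\setminus R_\omega$ (since $\omega$ essentially takes no value there). These facts ensure, first, that replacing the infimum in Definition \ref{def:monotone-rearrangement} by an infimum over all of $\mathbb{R}$ does not alter $\xi$, so that $\xi$ is the generalized right-continuous inverse of $\pi^{-1}\Psi$; and second, via $\inf_{\epsilon>0}\Psi(y+\epsilon)=\Psi(y)$, that the standard quantile equivalence $\xi(x)\leq y\Leftrightarrow x\leq \pi^{-1}\Psi(y)$ holds. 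Taking Lebesgue measure on the left-hand set yields the desired CDF identity for $\mu_\xi$, hence $\mu_\omega=\mu_\xi$ and $(\star)$. The one point requiring genuine care is the bookkeeping at $y$ lying on a plateau of $\Psi$ or in a gap of $R_\omega$; this is where the right-continuity of $\Psi$ and its constancy across gaps of $R_\omega$ are jointly indispensable, and it is the only delicate step of the argument.
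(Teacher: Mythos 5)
The paper does not prove this proposition itself; it cites \cite{bianchi2021analysis} and uses the statement as a black box. You have therefore supplied a proof where the paper merely gives a reference, and your pushforward/CDF argument is the standard way to establish that the monotone rearrangement preserves the spectral distribution (it is essentially the approach taken in the cited source). The reduction to the integral identity $(\star)$, the identification of both sides as integrals against pushforward probability measures, and the reduction to matching cumulative distribution functions are all correct, as is the observation that this hinges on the right-continuity of $\Psi$ and its constancy across gaps of $R_\omega$.

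One point is stated slightly too strongly: the pointwise equivalence $\xi(x)\leq y\Leftrightarrow x\leq \pi^{-1}\Psi(y)$ does not hold for every $x$. The forward implication is correct (right-continuity of $\Psi$ gives $\Psi(y)\geq\pi x$), but the reverse can fail precisely at $x=\pi^{-1}\Psi(y)$ when $\Psi$ has a flat stretch to the right of $y$: there one has $\pi x=\Psi(y)$ yet $\xi(x)>y$. What is actually true is the two-sided inclusion $\{x: x<\pi^{-1}\Psi(y)\}\subseteq\{x:\xi(x)\leq y\}\subseteq\{x: x\leq \pi^{-1}\Psi(y)\}$, and since these two bounding sets differ only by a single point, the Lebesgue measure of $\{x:\xi(x)\leq y\}$ is $\pi^{-1}\Psi(y)$ in all cases. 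This is exactly what you need for $\mu_\xi$ to have the stated CDF, so the conclusion stands; I only flag it because you present the biconditional as holding pointwise, which a careful reader would challenge. If you restate that step as a measure identity rather than a pointwise equivalence, the argument is airtight.
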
  

We conclude this subsection by introducing the discrete Weyl's law and some of its consequences. These results   describe the asymptotic behavior of eigenvalues  and are essential for our IGA spectral analysis  presented in the next section.  

\begin{theorem} [Discrete Weyl’s law, \cite{bianchi2021analysis}]\label{th1}  
Let $(L_n)_{n\in\mathbb{N}^*} \sim_{\lambda} \omega$ with $\omega: [0,1] \times [0,\pi] \longrightarrow \mathbb{R}$  having a bounded essential range. Define $\Psi:\, \mathbb{R} \longrightarrow \mathbb{R}_+$ as the function given by \eqref{eq:function-psi}. Then, at every point of continuity $y$ of $\Psi$, the eigenvalues of the matrices $L_n$ satisfy  
\begin{equation}\label{conversdiscretwels}  
    \lim_{n\rightarrow  +\infty} \dfrac{\left| \left\{k=1,\dots, N:\; \lambda_{k}(L_n)\leq y\right\} \right|}{N}=\frac{1}{\pi}\Psi(y),  
\end{equation}  

where, for a generic set $A$, $|A|$ denotes the number of elements in set $A$.  

Furthermore,  if we assume that $\Psi$ and $\xi$ are continuous, then for every sequence $k(n)\in\{1,\dots,N\}$ such that $\lim_{n\rightarrow  +\infty} \frac{k(n)}{N}=x\in[0,1]$ and $(\lambda_{k(n)}(L_n))_n\subset  R_\omega$, we have    
\begin{equation}\label{discretweylslaw}  
    \left(\frac{k(n)}{N}, \, \lambda_{k(n)}(L_n)  \right)\longrightarrow (x,\, \xi(x)), \; \text{ as } n \longrightarrow +\infty.  
\end{equation}  
\end{theorem}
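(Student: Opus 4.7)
The theorem has two parts: the distributional statement \eqref{conversdiscretwels} and the pointwise eigenvalue convergence \eqref{discretweylslaw}. The common thread is to transfer the abstract spectral distribution relation \eqref{discretsspecralrelation} from continuous compactly supported test functions to indicator functions of half-lines $(-\infty, y]$ via a sandwich argument, and then to exploit the fact that $\xi$ is a generalized inverse of $\Psi/\pi$.

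For Part 1, I would fix a continuity point $y$ of $\Psi$ and, given $\epsilon > 0$, use continuity to pick $\delta > 0$ with $\Psi(y+\delta) - \Psi(y-\delta) < \pi \epsilon$. Since $\omega$ has bounded essential range and the eigenvalues $\lambda_k(L_n)$ are positive, I would construct $F^-, F^+ \in \mathcal{C}_c(\mathbb{R})$ satisfying
$$\mathbf{1}_{(-\infty,\, y-\delta]} \leq F^- \leq \mathbf{1}_{(-\infty,\, y]} \leq F^+ \leq \mathbf{1}_{(-\infty,\, y+\delta]},$$
each supported on a bounded interval large enough to contain every eigenvalue that should be counted on the relevant side of $y$. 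Applying \eqref{discretsspecralrelation} to $F^\pm$ would sandwich $\frac{1}{N}|\{k : \lambda_k(L_n) \leq y\}|$ between two convergent quantities whose limits, being bounded by $\Psi(y+\delta)/\pi$ and $\Psi(y-\delta)/\pi$, differ by at most $\epsilon$. Sending $\epsilon \to 0$ then yields \eqref{conversdiscretwels}.

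For Part 2, I first observe that under joint continuity of $\Psi$ and $\xi$, the generalized inverse $\xi$ becomes a bona fide continuous inverse of $\Psi/\pi$: continuity of $\Psi$ combined with the infimum definition gives $\Psi(\xi(x)) = \pi x$ for $x \in (0,1)$, and continuity of $\xi$ forces $\Psi$ to be strictly increasing on $R_\omega$. Assuming now that $\lambda_{k(n)}(L_n) \not\to \xi(x)$, I would extract a subsequence $(n_j)$ with $\lambda_{k(n_j)}(L_{n_j}) \to L \neq \xi(x)$ and treat, without loss of generality, the case $L > \xi(x)$. Choosing $y$ with $\xi(x) < y < L$ gives $\Psi(y)/\pi > x$, and Part 1 applied at this continuity point provides
$$\frac{|\{k : \lambda_k(L_{n_j}) \leq y\}|}{N(n_j)} \longrightarrow \frac{\Psi(y)}{\pi} > x.$$
On the other hand, for $j$ large one has $\lambda_{k(n_j)}(L_{n_j}) > y$, so the strict increasing ordering forces $|\{k : \lambda_k(L_{n_j}) \leq y\}| \leq k(n_j) - 1$, whose ratio to $N(n_j)$ tends to $x$. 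This contradiction closes the argument; the case $L < \xi(x)$ is handled symmetrically with the reverse inequalities.

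The main obstacles are twofold. First, the construction of the test functions $F^\pm$ must carefully account for any eigenvalue sitting outside $R_\omega$ (outliers): the bounded support of $F^\pm$ has to capture every eigenvalue below $y+\delta$, and this ultimately relies on the spectral distribution hypothesis forcing the outliers to form a set of vanishing proportion. Second, justifying the identity $\Psi(\xi(x)) = \pi x$ from the infimum definition of $\xi$ is delicate: one really needs both continuities at once, because continuity of $\Psi$ alone yields only an inequality, while continuity of $\xi$ alone rules out the flat pieces of $\Psi$ that would otherwise obstruct inversion.
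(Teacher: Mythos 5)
The paper does not prove Theorem~\ref{th1}; it is imported verbatim from \cite{bianchi2021analysis}, so there is no in-paper argument to compare against. Your blind proof is essentially the standard argument for such a discrete Weyl law, and the overall outline (sandwich by $\mathcal{C}_c$ test functions for Part~1, contradiction via a subsequential limit $L\neq\xi(x)$ and the counting bound $|\{k:\lambda_k\leq y\}|\lessgtr k(n_j)$ for Part~2) is sound.

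Two points need tightening. First, the sandwich $\mathbf{1}_{(-\infty,\,y-\delta]}\leq F^- \leq \mathbf{1}_{(-\infty,\,y]}\leq F^+ \leq \mathbf{1}_{(-\infty,\,y+\delta]}$ cannot hold literally for $F^\pm\in\mathcal{C}_c(\mathbb{R})$. What is actually needed is that the inequalities hold on $[0,\infty)$ (equivalently on the union of $R_\omega$ and the set where the eigenvalues live), which suffices because the paper's eigenvalues are positive and $R_\omega$ is bounded; no control on outliers is required beyond positivity. Second, the two cases $L>\xi(x)$ and $L<\xi(x)$ in Part~2 are not quite symmetric. For $L>\xi(x)$, the strict inequality $\Psi(y)>\pi x$ for $y>\xi(x)$ falls out of the infimum definition directly. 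For $L<\xi(x)$ you need $\Psi(y)<\pi x$ for some $y\in(L,\xi(x))$, and this requires ruling out a flat level set of $\Psi$ at height $\pi x$ to the left of $\xi(x)$. That is precisely where continuity of $\xi$ is invoked: since $L\in R_\omega$ (closedness) and $L<\xi(x)$, one must have $\Psi(L)<\pi x$, for if $\Psi(L)=\pi x$ then $\xi(x-\epsilon)\leq L$ for every small $\epsilon>0$, contradicting $\xi(x^-)=\xi(x)$. You flag this as an obstacle, but your diagnosis is slightly off: the identity $\Psi(\xi(x))=\pi x$ for $x\in(0,1)$ already follows from continuity of $\Psi$ together with closedness of $R_\omega$ and the absence of atoms of $\omega$ (itself a consequence of continuity of $\Psi$); continuity of $\xi$ is needed elsewhere, namely to exclude the flat piece of $\Psi$ at level $\pi x$ just described.
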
  

From the discrete Weyl's law,  we can deduce the following result, which demonstrates that the number of outliers is very small compared to $N$, specifically of order $o(N)$.  
\begin{corollary} \label{numberofoutlierscor}  
Under the hypotheses of Theorem \ref{th1}, if $\Psi$ is continuous, then:  
$$  
\lim_{n\rightarrow  +\infty}\dfrac{\left| \left\{k=1,\dots, N:\; \lambda_{k}(L_n)\notin R_{\omega} \right\}\right|}{N}=0.  
$$  
Moreover, for all $t\in R_{\omega}$ we have   
$$  
\lim_{n\rightarrow  +\infty}\dfrac{\left| \left\{k=1,\dots, N:\; \lambda_{k}(L_n)\leq t,\;\lambda_{k}(L_n)\in R_{\omega} \right\}\right|}{N}=\frac{1}{\pi}\Psi(t).  
$$  
\end{corollary}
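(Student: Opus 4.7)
The plan is to deduce both assertions directly from Theorem \ref{th1} by using the extra regularity that continuity of $\Psi$ provides. Writing $F_n(y) := |\{k : \lambda_k(L_n) \leq y\}|/N$ and $F(y) := \Psi(y)/\pi$, the continuity of $\Psi$ makes every $y\in\mathbb{R}$ a continuity point, so Theorem \ref{th1} gives the pointwise convergence $F_n(y) \to F(y)$ for every $y \in \mathbb{R}$.

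For the first assertion, I would set $a := \inf R_{\omega}$ and $b := \sup R_{\omega}$ (finite by the boundedness assumption) and decompose the open set $R_{\omega}^{c}$ into its connected components: the two half-lines $(-\infty,a)$, $(b,+\infty)$, together with the bounded gaps $(c_j,d_j)\subset(a,b)\setminus R_{\omega}$. Since $\Psi\equiv 0$ below $a$ and $\Psi\equiv\pi$ above $b$, continuity forces $\Psi(a)=0$ and $\Psi(b)=\pi$, so the half-line contributions
\[
\frac{|\{k:\lambda_k<a\}|}{N} \leq F_n(a) \to 0, \qquad \frac{|\{k:\lambda_k>b\}|}{N} = 1 - F_n(b) \to 0
\]
vanish in the limit. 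For each bounded gap, closedness of $R_{\omega}$ combined with the definition of the essential range gives $\mu_{2}(\omega^{-1}((c_j,d_j)))=0$, so $\Psi$ is constant on $[c_j,d_j]$ by continuity, and therefore $|\{k:\lambda_k\in(c_j,d_j)\}|/N \leq F_n(d_j)-F_n(c_j) \to 0$.

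The second assertion then follows from the decomposition
\[
\frac{|\{k:\lambda_k\leq t,\;\lambda_k\in R_{\omega}\}|}{N} = F_n(t) - \frac{|\{k:\lambda_k\leq t,\;\lambda_k\notin R_{\omega}\}|}{N},
\]
where the first term tends to $\Psi(t)/\pi$ by Theorem \ref{th1} and the second is bounded by the total outlier proportion, which vanishes by the first assertion.

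The delicate point is that when $R_{\omega}$ is not an interval, the decomposition in the first assertion may involve countably many bounded gaps, and a term-by-term argument does not automatically make the total sum vanish. In the setting of this paper the symbol $\omega_{\phi}^{p}$ is continuous on the connected domain $[0,1]\times[0,\pi]$, so $R_{\omega}$ is a compact interval and no bounded gaps arise; in full generality one would complement the pointwise convergence $F_n \to F$ with a uniform-convergence argument (available since $F_n$ is monotone and $F$ is continuous) or with a finite-cover reduction to control the countable family of gaps simultaneously.
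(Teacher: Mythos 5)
Your deduction is the natural one and it is complete in the paper's setting: continuity of $\Psi$ promotes the conclusion of Theorem \ref{th1} to pointwise convergence $F_n(y)\to\Psi(y)/\pi$ for every $y$; setting $a=\inf R_\omega$ and $b=\sup R_\omega$ you correctly get $\Psi(a)=0$ and $\Psi(b)=\pi$ from $\omega\in R_\omega$ a.e., so the outlier proportions below $a$ and above $b$ vanish; and the second assertion follows from the first via the bookkeeping identity you wrote. Since in this paper $\omega_\phi^p$ is continuous on the compact connected set $[0,1]\times[0,\pi]$, the essential range is a compact interval, there are no interior gaps, and your argument closes with no loose ends. The paper itself simply cites \cite{bianchi2021analysis} and records the corollary as an immediate consequence of the discrete Weyl's law, so your route is the intended one.

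Where I would push back is on the final remark about the fully general case, because neither of the patches you propose actually removes the countably-many-gaps obstruction. Pólya's theorem does give $\|F_n-\Psi/\pi\|_\infty\to 0$, but this only bounds each gap increment $F_n(d_j)-F_n(c_j)$ by $2\|F_n-\Psi/\pi\|_\infty$, and summing that over infinitely many gaps gives nothing; a finite subcover cannot control the tail either, since the atomic measures $\tfrac1N\sum_k\delta_{\lambda_k}$ can charge sets of arbitrarily small Lebesgue measure. In fact the first assertion is genuinely false without a structural restriction on $R_\omega$: take $\omega(x,\theta)=\xi_C(x)$, the quantile function of the standard Cantor measure, so that $R_\omega$ is the Cantor set and $\Psi$ is $\pi$ times the (continuous) Cantor CDF, and let $L_n$ be diagonal of size $N=2^n$ with eigenvalues at the midpoints of the $2^n$ level-$n$ Cantor intervals. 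Then $(L_n)\sim_\lambda\omega$ (one even has $\|F_n-\Psi/\pi\|_\infty\le 2^{-n}$), yet every eigenvalue lies outside $R_\omega$, so the outlier proportion is identically $1$. The correct reading is that the corollary tacitly assumes $R_\omega$ has finitely many connected components (here it is a single interval); your proof is right, but the closing sentence should record that hypothesis rather than suggest it can be dispensed with.
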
  

In the outlier-free context, we can approximate all the eigenvalues of matrices $L_n$ through the uniform sampling of the monotone rearrangement, as shown in the following result \cite{bianchi2021analysis}.  

\begin{corollary}\label{cl1}  
Under the hypotheses of Theorem \ref{th1}, and assuming additionally that $\Psi$ and $\xi$ are continuous, and that $R_{\omega}$ is bounded, then in the  absence of outliers, the error between the uniform sampling of $\xi$ and the eigenvalues of $L_{n}$ tends to $0$ as $n$  approaches infinity, namely  
$$  
\lim_{ n \rightarrow + \infty} \sup_{1 \leq k \leq N}\left\{ \left| \lambda_{k}(L_n) - \xi\left(\frac{k}{N+1}\right)\right|  \right\} = 0.  
$$  
\end{corollary}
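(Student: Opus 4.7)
The plan is to derive the uniform bound from the pointwise Discrete Weyl's Law in Theorem \ref{th1} by a standard compactness/contradiction argument. Since $\xi$ is assumed continuous on the compact interval $[0,1]$, it is automatically uniformly continuous and bounded; and under the no-outlier hypothesis every eigenvalue $\lambda_k(L_n)$ lies in $R_\omega$, so the hypothesis $(\lambda_{k(n)}(L_n))_n\subset R_\omega$ of the second part of Theorem \ref{th1} is satisfied for \emph{any} choice of indices $k(n)$.

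I would argue by contradiction. Suppose the claimed uniform limit fails. Then there exist $\varepsilon_0>0$, a strictly increasing sequence of indices $n_j\to+\infty$, and integers $k_j\in\{1,\dots,N(n_j)\}$ such that
\[
\left| \lambda_{k_j}(L_{n_j}) - \xi\!\left(\frac{k_j}{N(n_j)+1}\right) \right|\;\geq\;\varepsilon_0
\quad\text{for all } j.
\]
Because $k_j/N(n_j)\in[0,1]$, Bolzano--Weierstrass yields a subsequence (still denoted $j$) along which $k_j/N(n_j)\to x_0\in[0,1]$, and since $N(n_j)\to+\infty$ we also have $k_j/(N(n_j)+1)\to x_0$.

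Applying the second part of Theorem \ref{th1} to this extracted sequence of indices (permissible thanks to the absence of outliers), we obtain
\[
\lambda_{k_j}(L_{n_j})\;\longrightarrow\;\xi(x_0),\qquad j\to+\infty,
\]
while the continuity of $\xi$ at $x_0$ gives $\xi(k_j/(N(n_j)+1))\to\xi(x_0)$. Subtracting, the quantity bounded below by $\varepsilon_0$ converges to $0$, a contradiction. This establishes the uniform convergence and completes the proof.

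I do not anticipate a serious obstacle here; the only point requiring mild care is checking that the Weyl's law statement applies at the endpoints $x_0=0$ and $x_0=1$, but Theorem \ref{th1} explicitly allows $x\in[0,1]$, so the argument above covers these cases without modification. The essential ingredients are thus just (i) the no-outlier assumption, which activates the convergence $\lambda_{k(n)}(L_n)\to\xi(x)$ unconditionally, and (ii) the continuity of $\xi$ on the compact set $[0,1]$, which converts the pointwise statement into a uniform one.
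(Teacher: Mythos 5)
Your argument is correct and uses the same contradiction/Bolzano--Weierstrass/Weyl's-law strategy that the paper adopts when proving the closely analogous Lemma \ref{approximationwithoutout}, following Corollary 3.3 of \cite{bianchi2021analysis}; the paper only cites Corollary \ref{cl1} itself without supplying a proof. One small point worth tightening: the second part of Theorem \ref{th1} is phrased for a sequence $k(n)$ defined for every $n$, whereas you extract only a subsequence $n_j$ with indices $k_j$. This is harmless but deserves a word: either observe that the subsequence of matrices $(L_{n_j})$ again satisfies $(L_{n_j})\sim_{\lambda}\omega$ with the same $\Psi$, $\xi$, so Theorem \ref{th1} applies to it directly, or extend $(k_j)$ to a full sequence $k(n)$ with $k(n)/N(n)\to x_0$ (for instance the nearest integer to $x_0 N(n)$ when $n$ is off the subsequence); the no-outlier hypothesis guarantees $(\lambda_{k(n)}(L_n))_n\subset R_{\omega}$ for that extension as well, so the convergence $\lambda_{k(n)}(L_n)\to\xi(x_0)$ holds and in particular along $n=n_j$ gives the contradiction you need. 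With that remark added, the proof is complete.
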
  

\begin{remark}  
In  Corollary \ref{cl1} above, the absence of outliers is an essential assumption. However, as illustrated in \cite[Chapter 5.1.2, p. 153]{cottrell2009isogeometric}, there is substantial numerical evidence pointing to the existence of outliers in Isogeometric Analysis (IGA) when B-splines of degree $p$ are employed. Specifically, it has been observed that the number of outliers depends only on $p$ and does not vary with the discretization step $h=1/n$. On the other hand, in \cite{manni2022application}, the authors proved that the optimal subspaces and the first reduced space are outlier-free. This illustrates that, in the absence of domain reparametrization, these spaces are the best choices for approximating the eigenvalues using the symbol (as shown in Corollary \ref{cl1}).  
\end{remark}  

In the subsequent sections of the paper, we will employ the notation $OUT(p, n)$ to represent the number of IGA outliers, where $OUT(p,n)=o(N)$, and  
\begin{equation}\label{setofnooutliers}  
    \mathcal{I}(p, n) = \left \{1, \cdots, N - OUT(p,n) \right \}  
\end{equation}  
 denotes the set of indices of eigenvalues after removing the outliers.

\subsection{The IGA GLT-Symbol}\label{symbol}
Before presenting the theorem that provides the spectral symbol of the matrix \(L^{p}_{\phi,n}\), it is essential to introduce what we call the  cardinal B-spline function \cite{de1978practical}. Let \(\mathcal{N}_p: \mathbb{R}\longrightarrow\mathbb{R} \) be the cardinal B-spline of degree \(p\), recursively defined as follows:  

\begin{equation}\label{eq:cardinal-B-spline-1}  
\mathcal{N}_0(x)=\left\{  
\begin{array}{ll}  
1,\;\;x\in[0,1],\\  
0,\;\;\text{otherwise},  
\end{array}  
\right.  
\vspace{-0.25cm}    
\end{equation}  

and  

\begin{equation}\label{eq:cardinal-B-spline-2}  
\mathcal{N}_p(x)=\frac{x}{p}\mathcal{N}_{p-1}(x)+\frac{p+1-x}{p}\mathcal{N}_{p-1}(x-1),\;\;x\in\mathbb{R},\;\;p\geq 1.  
\end{equation}  

It  has been shown in \cite{de1978practical} that \(\mathcal{N}_p\in C^{p-1}(\mathbb{R})\) and  
\[
supp(\mathcal{N}_p)=[0,p+1].
\]  
For \(p\in\mathbb{N}\),  we define the functions:  
\[
f_p:[0,\pi]\longrightarrow\mathbb{R}, \quad f_p(\theta)=-\mathcal{N}^{''}_{2p+1}(p+1)-2\displaystyle\sum_{k=1}^p\mathcal{N}^{''}_{2p+1}(p+1-k)\cos(k\theta),\;\;p\geq 1,
\]  
\[
g_p:[0,\pi]\longrightarrow\mathbb{R}, \quad g_p(\theta)=\mathcal{N}_{2p+1}(p+1)+2\displaystyle\sum_{k=1}^p\mathcal{N}_{2p+1}(p+1-k)\cos(k\theta),\;\;p\geq 0,
\]  
and  
\[
e_p:[0,\pi]\longrightarrow\mathbb{R},\quad e_p(\theta)=\dfrac{f_p(\theta)}{g_p(\theta)},\;\;p\geq 1.
\]  
It is known from \cite{garoni2014spectrum} that  
\[
f_p(\theta)=(2-2\cos(\theta))g_{p-2}(\theta),\;\;\theta\in[0,\pi],\;\; p\geq 2.
\]  

\[
\left( \frac{4}{\pi^2}\right)^{p+1}\leq g_p(\theta)\leq g_p(0)=1,\;\; \theta\in[0,\pi],\;\; p\geq 0.
\]  

We can  also see from \cite{donatelli2016spectral} (see also \cite{bianchi2018spectral}) that for every \(p\geq 2\), it holds that  

\begin{equation}\label{eq:preliminaries-1}  
e_p(\theta)=(2-2\cos(\theta))\frac{g_{p-2}(\theta)}{g_{p}(\theta)},\;\; \left( \frac{2}{\pi} \right)^{p-1}\leq\frac{g_{p-2}(\theta)}{g_{p}(\theta)}\leq \left( \frac{\pi}{2} \right)^{p+1},\;\; \theta\in[0,\pi].
\end{equation}  

$\hspace{0.52cm}$Using the inner-product property of cardinal B-splines, it has been demonstrated that both \(M_{\phi,n}^p\) and \(K_{\phi,n}^p\) are  small rank perturbations of Toeplitz matrices. A complete proof of this result can be found in \cite{garoni2017generalized}.  By applying the GLT theory, we obtain the following theorem:  

\begin{theorem}[\cite{garoni2017generalized}, IGA GLT symbol]  
Let \(p\geq 1\) and \(\phi\in \mathbf{C}_{[0,1]}\). Then  
\[
n^{-2}L^{p}_{\phi,n}\sim_{\lambda}\omega^p_{\phi},
\]  
where    
\begin{equation}\label{eq:spectralsymbo-full-matrix}  
\omega^p_{\phi}(x,\theta)=\dfrac{e_p(\theta)}{\left(\phi^{'}(x)\right)^2},\quad\forall (x,\theta)\in [0,1]\times[0,\pi].
\end{equation}  
\end{theorem}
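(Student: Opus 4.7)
The plan is to identify the GLT symbols of the scaled mass and stiffness matrices separately and then invoke the $\ast$-algebra structure of GLT sequences on $L^{p}_{\phi,n} = (M^{p}_{\phi,n})^{-1} K^{p}_{\phi,n}$. Since each entry of $M^{p}_{\phi,n}$ is an integral of $N_i^p N_j^p$ (weighted by $\phi'$) over a support of length $O(1/n)$, the natural scaling is $n M^{p}_{\phi,n}$; the derivatives in \eqref{eq:stiffness-matrix-dif} contribute an extra factor $n^{2}$, so the corresponding scaling is $n^{-1} K^{p}_{\phi,n}$. These match the identity
\[
n^{-2} L^{p}_{\phi,n} = \bigl(n M^{p}_{\phi,n}\bigr)^{-1} \bigl(n^{-1} K^{p}_{\phi,n}\bigr),
\]
so producing the symbols of the two bracketed factors is enough.

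Next I would compute those two symbols. In the unweighted case $\phi=\mathrm{id}$, after the change of variables $x = t/n$, the cardinal-B-spline inner-product identities collected around \eqref{eq:preliminaries-1} identify the Toeplitz generating functions of $n M^{p}_{n}$ and $n^{-1}K^{p}_{n}$ as $g_p(\theta)$ and $f_p(\theta)$, respectively; excluding the two boundary B-splines, together with the finitely many rows and columns at each end where the Toeplitz pattern breaks, introduces only a rank-$O(1)$ perturbation, which does not alter the GLT symbol. For general $\phi \in \mathbf{C}_{[0,1]}$, the key point is locality: since $\supp(N_i^p N_j^p)$ sits in an interval of length $O(1/n)$ around $i/n$ and $\phi' \in C^{1}([0,1])$ is Lipschitz, one may replace the weight $|\phi'(x)|$ in \eqref{eq:mass-matrix-dif} by its nodal value $\phi'(i/n)$ with an entrywise error $O(1/n^{2})$, and likewise for $1/|\phi'|$ in \eqref{eq:stiffness-matrix-dif}. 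This places the scaled matrices in locally Toeplitz form with diagonal sampling of $\phi'$ and $1/\phi'$, yielding
\[
n M^{p}_{\phi,n} \sim_{\mathrm{GLT}} \phi'(x)\, g_p(\theta), \qquad n^{-1} K^{p}_{\phi,n} \sim_{\mathrm{GLT}} \frac{1}{\phi'(x)}\, f_p(\theta).
\]

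To conclude, I would exploit that $\phi' > 0$ on $[0,1]$ together with the lower bound $g_p(\theta) \geq (4/\pi^{2})^{p+1}$ recalled just before \eqref{eq:preliminaries-1}, which makes $\phi'(x)\,g_p(\theta)$ bounded away from zero. The GLT algebra then gives $(n M^{p}_{\phi,n})^{-1} \sim_{\mathrm{GLT}} 1/(\phi'(x) g_p(\theta))$, and multiplying by the symbol of $n^{-1}K^{p}_{\phi,n}$ produces
\[
n^{-2} L^{p}_{\phi,n} \sim_{\mathrm{GLT}} \frac{f_p(\theta)}{(\phi'(x))^{2}\,g_p(\theta)} = \frac{e_p(\theta)}{(\phi'(x))^{2}} = \omega^p_\phi(x,\theta),
\]
and since GLT distribution implies spectral distribution, the theorem follows. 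I expect the main technical obstacle to be the locality step: one has to verify rigorously that the nodal-replacement error is a zero-distributed matrix sequence, uniformly in $n$, and that the boundary modifications are genuinely a vanishing-rank perturbation. This is precisely where both the regularity $\phi \in C^{2}([0,1])$ and the strict positivity $\phi'>0$ enter the argument.
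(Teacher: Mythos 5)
The paper does not actually prove this theorem: it is quoted directly from \cite{garoni2017generalized}, and the text preceding it only recalls that the mass and stiffness matrices are low-rank perturbations of Toeplitz matrices and then invokes ``the GLT theory.'' Your plan is, in essence, the standard derivation from that reference: write $n^{-2}L^p_{\phi,n} = (nM^p_{\phi,n})^{-1}(n^{-1}K^p_{\phi,n})$, identify the two factors as GLT sequences with symbols $\phi'(x)\,g_p(\theta)$ and $\tfrac{1}{\phi'(x)}\,f_p(\theta)$ via the cardinal-B-spline inner-product identities plus a locality/zero-distribution argument, use $\phi'>0$ together with $g_p(\theta)\geq (4/\pi^2)^{p+1}$ to invert in the GLT $\ast$-algebra, and multiply. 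The scalings, the rank-$O(1)$ boundary correction, and the nodal-replacement step (which does hinge on $\phi\in C^2$) are all correctly placed.

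The one genuine gap is your closing sentence, ``since GLT distribution implies spectral distribution, the theorem follows.'' The GLT axioms give $\sim_{\mathrm{GLT}}\Rightarrow\sim_\sigma$ (singular value distribution) unconditionally, but they give $\sim_\lambda$ — which is exactly what the theorem asserts, via Definition \ref{defspectralsymbol} — only for Hermitian sequences, and $L^p_{\phi,n}=(M^p_{\phi,n})^{-1}K^p_{\phi,n}$ is not Hermitian. To close the argument you must first observe that $M^p_{\phi,n}$ is symmetric positive definite and that $L^p_{\phi,n}$ is similar to the symmetric matrix $(M^p_{\phi,n})^{-1/2}K^p_{\phi,n}(M^p_{\phi,n})^{-1/2}$; since the GLT class is closed under products, inverses, and functions of Hermitian GLT sequences, this symmetrized matrix is itself GLT with the same symbol $e_p(\theta)/(\phi'(x))^2$, and being Hermitian it satisfies $\sim_\lambda$. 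Similarity preserves the spectrum, so $L^p_{\phi,n}$ inherits the same eigenvalue distribution. Without this Hermitianization step, the conclusion $\sim_\lambda$ does not follow from the GLT symbol computation alone.
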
  

The following result illustrates the regularity and some properties of the symbol   $e_p$ (see \cite{ekstrom2018eigenvalues}).
\begin{corollary}\label{eppropreties}
Let $p\geq 1$. The function $e_p$ is differentiable, nonnegative, and monotonically increasing on the interval $[0,\pi]$. Additionally, it satisfies the following properties:
$$
 e_p(\theta) \sim_{\theta \to 0^{+}} \theta^2, \quad \text{as}\quad \lim_{p\rightarrow +\infty }\sup_{\theta\in[0,\pi]}|e_p(\theta)-\theta^2|=0.
$$
\end{corollary}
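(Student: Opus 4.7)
My plan is to establish the easy structural properties (differentiability, nonnegativity, and the asymptotic at $\theta=0$) directly from the quotient representation $e_p=f_p/g_p$, and then address the two more delicate assertions -- monotonicity and the uniform $p\to\infty$ limit -- separately. For differentiability, $f_p$ and $g_p$ are finite real cosine polynomials, hence $C^\infty$ on $\mathbb{R}$, and the lower bound $g_p(\theta)\geq (4/\pi^2)^{p+1}>0$ recalled in the preliminaries ensures that $g_p$ never vanishes on $[0,\pi]$; therefore $e_p\in C^\infty([0,\pi])$. For nonnegativity I would split on $p$: if $p\geq 2$, the identity $f_p(\theta)=(2-2\cos\theta)\,g_{p-2}(\theta)$ gives $f_p\geq 0$ since both factors are nonnegative on $[0,\pi]$, whence $e_p\geq 0$; if $p=1$, a direct computation via the recursion \eqref{eq:cardinal-B-spline-1}--\eqref{eq:cardinal-B-spline-2} yields $f_1(\theta)=2-2\cos\theta$ and $g_1(\theta)=(2+\cos\theta)/3$, both nonnegative on $[0,\pi]$. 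The asymptotic $e_p(\theta)\sim\theta^2$ as $\theta\to 0^+$ then follows from $2-2\cos\theta=\theta^2+O(\theta^4)$ combined with the normalization $g_{p-2}(0)=g_p(0)=1$, which gives $e_p(\theta)/\theta^2\to 1$.

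Monotonicity on $[0,\pi]$ is the first point at which the argument becomes technical, and I expect it to be the principal obstacle. Writing $r_p:=g_{p-2}/g_p>0$ and differentiating $e_p=(2-2\cos\theta)\,r_p$ one gets
\[
e_p'(\theta)=2\sin\theta\cdot r_p(\theta)+(2-2\cos\theta)\,r_p'(\theta),
\]
so the first summand is manifestly nonnegative on $[0,\pi]$ and the sign of $e_p'$ reduces to controlling $r_p'$. Rather than a direct inequality manipulation on the cosine coefficients of $g_{p-2}$ and $g_p$, I would argue indirectly: $e_p$ is the spectral symbol of the symmetric positive-definite sequence $n^{-2}(M_n^p)^{-1}K_n^p$ attached to the unreparametrized IGA Laplace problem, whose ordered eigenvalues converge (by the discrete Weyl law, Theorem \ref{th1}) to the monotone sampling of $e_p$ on $[0,\pi]$; since these eigenvalues are classically known to be strictly ordered, $e_p$ must itself be monotone. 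This is exactly the monotonicity statement recorded in \cite{ekstrom2018eigenvalues}, which I would invoke.

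For the uniform limit $\sup_{\theta\in[0,\pi]}|e_p(\theta)-\theta^2|\to 0$ I would use the splitting
\[
e_p(\theta)-\theta^2=\bigl(2-2\cos\theta-\theta^2\bigr)\,r_p(\theta)+\theta^2\bigl(r_p(\theta)-1\bigr),
\]
which isolates the $p$-independent approximation $2-2\cos\theta\approx\theta^2$ (a smooth remainder of order $\theta^4$ near $0$ and uniformly bounded on $[0,\pi]$) from the ratio behavior. The substantive issue, and the second real obstacle, is then to prove that $r_p(\theta)\to 1$ uniformly on $[0,\pi]$ as $p\to\infty$. I would derive this from the Gaussian-type concentration of the central cardinal B-spline values $\mathcal{N}_{2p+1}(p+1-k)$ around $k=0$ as $p$ grows, which forces both $g_{p-2}$ and $g_p$ to concentrate towards their common value $1$ at the origin and hence their ratio towards $1$; should a ground-up derivation prove too cumbersome, the same uniform estimate is already contained in the asymptotic analysis of \cite{ekstrom2018eigenvalues} and could be cited in its place.
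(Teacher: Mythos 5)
The paper offers no proof of this corollary; it is stated with a bare pointer to \cite{ekstrom2018eigenvalues}, so your ultimate fallback to citation for the two hard claims is the same route the paper takes. Your treatment of the routine parts is correct: the positive lower bound on $g_p$ gives smoothness of $e_p$, the factorization $f_p=(2-2\cos\theta)g_{p-2}$ (and a direct $p=1$ computation) gives nonnegativity, and $g_{p-2}(0)=g_p(0)=1$ with $2-2\cos\theta\sim\theta^2$ gives the asymptotic at the origin.

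The two heuristics you offer for the harder claims, however, contain genuine logical gaps. For monotonicity, the argument ``the ordered eigenvalues of $n^{-2}(M_n^p)^{-1}K_n^p$ converge to the samples of $e_p$, and since eigenvalues are strictly ordered, $e_p$ must be monotone'' is invalid: the relation $(L_n)_n\sim_\lambda\omega$ in Definition~\ref{defspectralsymbol} is invariant under measure-preserving rearrangements of the symbol. In particular the monotone rearrangement $\xi$ of Definition~\ref{def:monotone-rearrangement} always produces a monotone symbol for the very same sequence, while the ordered eigenvalues stay ordered; the eigenvalue ordering therefore carries no information about whether the chosen representative $e_p$ is itself monotone. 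A direct argument on $r_p'=(g_{p-2}/g_p)'$ (or the citation) is unavoidable here.

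For the uniform limit, the identity
\[
e_p(\theta)-\theta^2=\bigl(2-2\cos\theta-\theta^2\bigr)\,r_p(\theta)+\theta^2\bigl(r_p(\theta)-1\bigr)
\]
is algebraically correct, but the plan built on it is self-defeating. If, as you conjecture from ``Gaussian concentration,'' $r_p\to 1$ uniformly on $[0,\pi]$, then the second term vanishes but the first term tends to $2-2\cos\theta-\theta^2$, which is far from zero for $\theta$ away from the origin (at $\theta=\pi$ it equals $4-\pi^2\approx-5.87$). So proving $r_p\to 1$ would establish $e_p\to 2-2\cos\theta$, not $e_p\to\theta^2$. In fact the limit $e_p\to\theta^2$ forces $r_p(\theta)\to\theta^2/(2-2\cos\theta)=\bigl(\tfrac{\theta/2}{\sin(\theta/2)}\bigr)^2$, which equals $1$ only at $\theta=0$ and grows to $\pi^2/4$ at $\theta=\pi$; the concentration heuristic ``$r_p\to 1$ uniformly'' is therefore false and the splitting as written cannot be repaired without abandoning that claim. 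Both of these points would need to be fixed, or the argument replaced entirely by the citation to \cite{ekstrom2018eigenvalues}.
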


In what follows, our focus will be on the frequency analysis of the matrix $L^{p}_{\phi,n}$. To simplify the notation, we refer to the size of $L^{p}_{\phi,n}$ by $N$, such that $N=n+p-2$. Let us introduce the following functions:
For all $\phi\in \mathbf{C}_{[0,1]}$, 
\begin{equation}\label{largpsidif}
    \Psi_{\phi}^{p}(y)=\mu_{2} \left( \left\{  (x,\theta)\in [0,1]\times[0,\pi]:\;\; \sqrt{\omega_{\phi}^p(x,\theta)}\leq y    \right\} \right),\;\;\forall y\in  R_{\sqrt{\omega_{\phi}^p}},
\end{equation}
and 
\begin{equation}\label{symboldif}
  \sqrt{\xi_{\phi}^p}(x)=\inf\left\{ y\in   R_{\sqrt{\omega_{\phi}^p}}:\;\; \Psi_{\phi}^{p}(y)>\pi  x  \right\},\;\;\forall x\in(0,1).  
\end{equation}
Based on  \cite[Theorem 10.16]{garoni2017generalized}, we derive using Generalized Locally Toeplitz (GLT) theory that the frequency distribution of the matrices $(n^{-2}L_{\phi,n}^p)$ follows $\sqrt{\xi^p_{\phi}}$, a property that has already been used in \cite{bianchi2018spectral}. Specifically, we have 
\begin{equation}\label{spectarlfrequincy}
    \sqrt{n^{-2}L_{\phi,n}^p}\sim_{\lambda}\sqrt{\xi^p_{\phi}},
\end{equation}
where $\sqrt{\xi_{\phi}^p}$ is given by \eqref{symboldif}, the monotone rearrangement of $\sqrt{\omega_\phi^p}$. Note that in our case, $ \sqrt{\omega_{\phi}^p} $ is a continuous function, which implies that the essential range of $ \sqrt{\omega_{\phi}^p} $ coincides with its regular range. Furthermore, it is apparent in this scenario that $\Psi_{\phi}^{p}$ is strictly increasing over $Rg\left(\sqrt{\omega_{\phi}^p}\right)$.

We now conclude the preliminaries section with the following result, which plays a crucial role in our analysis. For the proof, see \cite{bianchi2018spectral}.

\begin{proposition}\label{stefa}
For all  $\phi\in \mathbf{C}_{[0,1]}$ and for all $p\geq 1$, we have 
$$
\Psi_{\phi}^{p}\in \mathcal{C}^1((0,+\infty))\;\;\text{and} \;\; \sup_{y \geq 0} \left(\Psi^{p}_{\phi}\right)'(y)<\infty.
$$ 
Moreover,
\begin{equation}\label{relatioxiandPsi}
   \sqrt{\xi_{\phi}^p}(x)=\left(\Psi_{\phi}^{p}\right)^{-1}(\pi x),\;\;\forall x \in [0,1],
\end{equation}
where $\Psi_{\phi}^{p}$ and $\sqrt{\xi_{\phi}^p}$ are given by \eqref{largpsidif} and \eqref{symboldif}, respectively.
\end{proposition}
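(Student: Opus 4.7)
The plan is to exploit the product structure $\sqrt{\omega^p_\phi(x,\theta)} = F(\theta)/\phi'(x)$, where $F(\theta) := \sqrt{e_p(\theta)}$ is continuous and strictly increasing from $[0,\pi]$ onto $[0,F(\pi)]$ by Corollary \ref{eppropreties}. Fubini and slicing in $\theta$ then give
\begin{equation*}
\Psi^{p}_{\phi}(y) = \int_0^1 H\bigl(y\,\phi'(x)\bigr)\, dx, \qquad H(s) := \min\bigl(F^{-1}(s),\, \pi\bigr),
\end{equation*}
extended by $H(s)=\pi$ for $s \geq F(\pi)$. The function $H$ is continuous and non-decreasing on $[0,\infty)$, differentiable except possibly at $s = F(\pi)$, with continuous matching $H(F(\pi))=\pi$.

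Next, I would differentiate under the integral. For each $y>0$ the level set $\{x : y\phi'(x) = F(\pi)\}$ has measure zero, because $\phi''$ has strict constant sign on $[0,1]$ (definition of $\mathbf{C}_{[0,1]}$), hence $\phi'$ is strictly monotone. Therefore $\partial_y H(y\phi'(x)) = \phi'(x)\,H'(y\phi'(x))$ a.e., and the plan for producing a dominating function that simultaneously yields $C^1$-regularity and the uniform bound is to change variables $u = y\phi'(x)$, converting the contribution of $\{x : y\phi'(x) \leq F(\pi)\}$ into an integral of $(F^{-1})'(u)$ weighted by $1/\bigl(y\,|\phi''((\phi')^{-1}(u/y))|\bigr)$. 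Since $\phi'$ and $\phi''$ are bounded and bounded away from zero on $[0,1]$, and since $(F^{-1})'(u)$ has at worst an inverse-square-root singularity at $u=F(\pi)$ (because $e'_p(\pi)=0$ forces $F(\pi)-F(\theta)\asymp(\pi-\theta)^2$), the integral converges absolutely and uniformly in $y$, delivering $\sup_y (\Psi^p_\phi)'(y)<\infty$. Continuity of $(\Psi^p_\phi)'$ across the two critical thresholds $y=F(\pi)/\max\phi'$ and $y=F(\pi)/\min\phi'$ then follows from a Leibniz-rule computation in which the endpoint boundary terms cancel exactly, thanks to $H(F(\pi))=\pi$.

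Finally, for \eqref{relatioxiandPsi}: the regularity above together with the strict monotonicity of $F$ and $\phi'>0$ show that $\Psi^p_\phi$ is continuous and strictly increasing on $R_{\sqrt{\omega^p_\phi}} = [0, F(\pi)/\min\phi']$, bijecting this interval onto $[0,\pi]$; the infimum in \eqref{symboldif} is therefore attained and equal to $(\Psi^p_\phi)^{-1}(\pi x)$. The main obstacle throughout is the uniform boundedness of $(\Psi^p_\phi)'$ rather than mere local differentiability, because $(F^{-1})'$ genuinely blows up along $\theta=\pi$. What ultimately tames the singularity is the structural assumption $|\phi''|>0$ built into $\mathbf{C}_{[0,1]}$, which produces the Jacobian factor $1/|\phi''|$ in the change of variables; equivalently, via the co-area identity $(\Psi^p_\phi)'(y)=\int_{\{\sqrt{\omega^p_\phi}=y\}}|\nabla\sqrt{\omega^p_\phi}|^{-1}\,d\mathcal{H}^1$, the same hypothesis guarantees $|\nabla\sqrt{\omega^p_\phi}|$ stays bounded below on the problematic $\theta=\pi$ portion of every level set, which is the geometric reason the bound holds.
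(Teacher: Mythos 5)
The paper itself offers no proof of Proposition \ref{stefa}; it cites \cite{bianchi2018spectral}, so there is no paper-internal argument to compare against. Your slice-and-Fubini representation $\Psi^p_\phi(y)=\int_0^1 H\bigl(y\,\phi'(x)\bigr)\,dx$ with $H=\min(F^{-1},\pi)$ is correct, and the decisive structural insight --- that the hypothesis $|\phi''|>0$ built into $\mathbf{C}_{[0,1]}$ supplies, after the change of variables $u=y\phi'(x)$, the Jacobian factor that tames the singularity of $(F^{-1})'$ at $F(\pi)$ --- is the right one. The derivation of \eqref{relatioxiandPsi} from strict monotonicity of $\Psi^p_\phi$ on the essential range is also fine.

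Two quantitative claims you lean on are, however, left unjustified. First, you assert $e_p'(\pi)=0$ without proof; this is in fact true because each $g_q$ is a cosine polynomial, so $g_q'(\pi)=0$, and $\frac{d}{d\theta}(2-2\cos\theta)$ also vanishes at $\pi$, whence \eqref{eq:preliminaries-1} gives $e_p'(\pi)=0$ for $p\geq 2$ (for $p=1$ a direct computation yields $e_1'(\theta)=18\sin\theta/(2+\cos\theta)^2$), but the line should appear. Second, and more seriously, the statement ``$F(\pi)-F(\theta)\asymp(\pi-\theta)^2$'' presupposes that the zero of $e_p(\pi)-e_p(\cdot)$ at $\theta=\pi$ is exactly of order two, i.e.\ $e_p''(\pi)\neq 0$, which you never check. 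You can sidestep this: $e_p$ is a ratio of trigonometric polynomials with nonvanishing denominator, hence real-analytic, and it is even in $(\theta-\pi)$ since it is built from $\cos(k\theta)$; so $e_p(\pi)-e_p(\theta)$ has a zero of finite even order $2m$, giving $(F^{-1})'(u)\asymp(F(\pi)-u)^{(1-2m)/(2m)}$ with exponent strictly greater than $-1$. After substituting $u=y\phi'(x)$, the relation $F(\pi)-y\phi'(x)\asymp y\,|x-x^*|$ (a consequence of $|\phi''|>0$) makes the $x$-integral near the critical $x^*$ converge uniformly over $y$ bounded away from $0$, for any $m$, so the precise second-order rate is not actually needed. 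Two further slips are worth flagging: the weight after the substitution is $u/(y^2|\phi''|)$ rather than $1/(y|\phi''|)$, because $\phi'(x)=u/y$ (harmless, as $\phi'$ is bounded above and below); and your closing co-area remark treats only the $\theta$-near-$\pi$ part of the level set --- on the complementary part the needed lower bound on $|\nabla\sqrt{\omega^p_\phi}|$ comes from $\partial_\theta\sqrt{\omega^p_\phi}=F'(\theta)/\phi'(x)$, which is bounded below because $e_p'>0$ on $(0,\pi)$ and $F'(0)=1$, a fact independent of $\phi''$.
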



\section{Symbol analysis}\label{sec3:symbolanalysis}
In this section, we examine the behavior of the symbol of the eigenfrequency near zero. We first establish that $\sqrt{\xi_{\phi}^1}$, corresponding to $p=1$, is $C^2$-regular and strictly convex in a neighborhood of zero. We then leverage this analysis to derive the asymptotic behavior of the symbol $\sqrt{\xi_{\phi}^p}$ near zero for all $p$. The symbol analysis carried out in this section is crucial for the subsequent sections. For the moment, the results regarding the symbol $\sqrt{\xi_{\phi}^1}$ serve as an example application of Theorem \ref{cl2.5} presented in the next Section \ref{sec4:impactofmapping}. Furthermore, the linearity of the symbol near zero is a key result for the eigenfrequency estimations established in Section \ref{improvedestimation}.

Throughout this section, we choose $\phi\in\mathbf{C}_{[0,1]}$ to be strictly convex. However, with slight modifications to the proofs, similar results can be established when the reparametrization is strictly concave.


\begin{proposition}\label{propc}
Let $\phi$ be a strictly convex reparametrization of the interval $[0,1]$. Then for every $\epsilon\in(0,1)$, the function $\Psi_{\phi}^{1}$ is $C^{2}\left(\left(0,\epsilon\frac{\sqrt{12}}{\phi^{'}(1)}  \right) \right)$ and \\ $\left(\Psi_{\phi}^{1}\right)^{'}(0)=1$. In addition, $\Psi_{\phi}^{1}$ is strictly concave over $\left(0,\frac{\sqrt{6}}{\phi^{'}(1)}\right)$.
\end{proposition}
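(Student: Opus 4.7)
The plan is to reduce the area integral defining $\Psi_\phi^1$ to a one-dimensional integral over $x$ via the $\theta$-monotonicity of $e_1$, and then to exploit a closed form for the inverse. By Corollary \ref{eppropreties}, $e_1$ is smooth and strictly increasing on $[0,\pi]$ with $e_1(0)=0$ and $e_1(\pi)=12$; consequently $\tilde e_1:=\sqrt{e_1}$ is a strictly increasing bijection from $[0,\pi]$ onto $[0,\sqrt{12}]$, and the inequality $\sqrt{\omega_\phi^1(x,\theta)}\leq y$ is equivalent to $\theta\leq \tilde e_1^{-1}(y\phi'(x))$ whenever $y\phi'(x)\leq \sqrt{12}$. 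Since $\phi$ is strictly convex, $\phi'$ attains its maximum at $1$, so for $y<\sqrt{12}/\phi'(1)$ the condition holds for every $x\in[0,1]$, giving
$$
\Psi_\phi^1(y)=\int_0^1 \tilde e_1^{-1}\bigl(y\phi'(x)\bigr)\,\dd x,\qquad y\in\bigl[0,\sqrt{12}/\phi'(1)\bigr).
$$

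Next, starting from $e_1(\theta)=6(1-\cos\theta)/(2+\cos\theta)$, I would solve $\tilde e_1(\theta)=u$ directly to obtain the closed form
$$
\tilde e_1^{-1}(u)=\arccos\!\left(\frac{6-2u^2}{6+u^2}\right),\qquad u\in[0,\sqrt{12}),
$$
which is $C^\infty$ on $[0,\sqrt{12})$ with derivative
$$
\bigl(\tilde e_1^{-1}\bigr)'(u)=\frac{12\sqrt{3}}{(6+u^2)\sqrt{12-u^2}},\qquad \bigl(\tilde e_1^{-1}\bigr)'(0)=1.
$$
For $\epsilon\in(0,1)$ and $y\in(0,\epsilon\sqrt{12}/\phi'(1))$, the argument $y\phi'(x)$ lies uniformly in $[0,\epsilon\sqrt{12}]\subset[0,\sqrt{12})$, a compact subset on which all derivatives of $\tilde e_1^{-1}$ are bounded. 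This legitimizes two applications of differentiation under the integral sign and yields $\Psi_\phi^1\in C^2((0,\epsilon\sqrt{12}/\phi'(1)))$. Passing to $y\to 0^+$ in
$$
(\Psi_\phi^1)'(y)=\int_0^1 \bigl(\tilde e_1^{-1}\bigr)'\!\bigl(y\phi'(x)\bigr)\,\phi'(x)\,\dd x
$$
and using $\int_0^1\phi'(x)\,\dd x=\phi(1)-\phi(0)=1$ together with $(\tilde e_1^{-1})'(0)=1$ produces $(\Psi_\phi^1)'(0)=1$.

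For strict concavity on $(0,\sqrt{6}/\phi'(1))$, a further differentiation gives $(\Psi_\phi^1)''(y)=\int_0^1 (\tilde e_1^{-1})''(y\phi'(x))(\phi'(x))^2\,\dd x$, so it suffices to prove $(\tilde e_1^{-1})''(u)<0$ on $(0,\sqrt{6})$; indeed the bound $y<\sqrt{6}/\phi'(1)$ forces $y\phi'(x)<\sqrt{6}$ for every $x\in[0,1]$, making the integrand strictly negative. Writing $(\tilde e_1^{-1})'(u)=12\sqrt{3}/h(u)$ with $h(u)=(6+u^2)\sqrt{12-u^2}$, the sign of $(\tilde e_1^{-1})''$ is opposite to that of $h'$; a short computation gives $h'(u)=3u(6-u^2)/\sqrt{12-u^2}$, which is strictly positive on $(0,\sqrt{6})$, finishing the argument.

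The main obstacle is conceptual rather than computational: one must recognize that the threshold $\sqrt{6}/\phi'(1)$ in the statement is sharp precisely because $\sqrt{6}=\tilde e_1(2\pi/3)$ is the image under $\tilde e_1$ of the inflection point $\theta=2\pi/3$ (the root of $\cos\theta=-1/2$). Once the closed form of $\tilde e_1^{-1}$ is in hand, every remaining step is routine one-variable analysis with differentiation under the integral. The $\epsilon$-factor in the $C^2$ interval is imposed solely to stay strictly away from $u=\sqrt{12}$, where $(\tilde e_1^{-1})'$ blows up because $e_1'(\pi)=0$.
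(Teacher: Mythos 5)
Your proof is correct and takes essentially the same route as the paper: both derive the explicit formula $\Psi_\phi^1(y)=\int_0^1\arccos\bigl(\tfrac{6-2(y\phi'(x))^2}{6+(y\phi'(x))^2}\bigr)\,\dd x$ on $(0,\sqrt{12}/\phi'(1))$, differentiate under the integral sign twice, and read off the sign of the second derivative on $(0,\sqrt{6}/\phi'(1))$. Your reformulation through the single one-variable function $\tilde e_1^{-1}$ makes the structure slightly more transparent (the sign question cleanly reduces to $(\tilde e_1^{-1})''<0$ on $(0,\sqrt 6)$, and the threshold $\sqrt 6=\tilde e_1(2\pi/3)$ is identified with the inflection point), whereas the paper carries the two-variable integrand $\varphi(x,y)$ through the same computation; the substance, formulas, and dominated-convergence justification are identical.
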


\begin{proof}
Let $\phi\in\mathbf{C}_{[0,1]}$ to be strictly convex and $y\in \mathbb{R}_+$. As a first step, we obtain an explicit expression for $\Psi_{\phi}^{1}(y)$. We proceed as follows
\begin{align}\label{eq:propc-eq1}
\Psi_{\phi}^{1}(y)&= \mu_2\left\{ \sqrt{\omega_{\phi}^1}\leq y\right\} \nonumber\\
& =\mu_2\left\{ (x,\theta)\in [0,1]\times[0,\pi],\;\;\; \omega_{\phi}^1(x,\theta)\leq y^2 \right\} \nonumber \\
 & =\mu_2\left\{ (x,\theta)\in [0,1]\times[0,\pi],\;\;\; \dfrac{6(1-\cos{\theta})}{2+\cos{\theta}}\leq \left(y \phi^{'}(x)\right)^2 \right\} \nonumber\\
 &= \mu_2\left\{ (x,\theta)\in [0,1]\times[0,\pi],\;\;\; 6(1-\cos{\theta})\leq 2 \left(y \phi^{'}(x)\right)^2 + \left(y \phi^{'}(x)\right)^2 \cos{\theta} \right\}\nonumber\\
 &= \mu_2\left\{ (x,\theta)\in [0,1]\times[0,\pi],\;\;\; \cos{\theta}\geq \beta_x \right\}.
\end{align}
Here, $\beta_x$ is defined as
$$
\beta_x = \dfrac{6-2 \left(y\phi^{'}(x)\right)^2}{6+\left(y \phi^{'}(x)\right)^2}, \quad 0 \leq x \leq 1. 
$$
To evaluate the last measure \eqref{eq:propc-eq1}, we employ the property that $\cos:\, [0, \pi] \longrightarrow [-1,1]$ is invertible, $\arccos$ being its inverse function. This requires characterizing the conditions under which $\beta_x$ is in $[-1, 1]$. In fact, it is easy to see that $\beta_x \leq 1$ for all $x \in [0,1]$, and $\beta_x \geq -1$ if, and only if
\begin{equation*}
    \left\{
\begin{array}{ll}
0 \leq x \leq 1 \\
0 \leq y \leq \frac{\sqrt{12}}{\phi'(1)}, 
 \end{array}
\right. \text{or} \quad \left\{
\begin{array}{ll}
0 \leq x \leq \left(\phi' \right)^{-1} \left( \frac{\sqrt{12}}{y} \right)\\
\frac{\sqrt{12}}{\phi'(1)} \leq y \leq \frac{\sqrt{12}}{\phi'(0)}.
\end{array}
\right.
\end{equation*}
Therefore, from \eqref{eq:propc-eq1}, we deduce that
\begin{equation}\label{psiequa}
  \Psi_{\phi}^{1}(y)=\left\{
\begin{array}{ll}
  \displaystyle\int_{0}^{1}\arccos\left(  \dfrac{6-2\left( y\phi'(x)\right)^2}{6+\left( y\phi'(x)\right)^2}\right)\;dx, &y\in J_1,\\\\

\displaystyle\int_{0}^{\left( \phi'\right)^{-1}\left(\dfrac{\sqrt{12}}{y}\right)}\arccos\left(  \dfrac{6-2\left( y\phi'(x)\right)^2}{6+\left( y\phi'(x)\right)^2}\right)\;dx \\ \hspace{4cm}+\pi\left(1-\left( \phi'\right)^{-1}\left(\dfrac{\sqrt{12}}{y}\right)\right),\\ 
&y\in J_2,
 \end{array}
\right.
\end{equation}
where $J_1:= \left(0,\dfrac{\sqrt{12}}{\phi'(1)}\right)$ and $J_2:=  \left(\dfrac{\sqrt{12}}{\phi'(1)},\dfrac{\sqrt{12}}{\phi'(0)}\right)$. Since $\left(\Psi_{\phi}^{1}\right)'$ is $C^1((0,+\infty[)$ by Proposition \ref{stefa}, we then compute $\left(\Psi_{\phi}^{1}\right)'$ on $J_1$. 

Let $y \in J_1$, we define
$$  
\varphi(x,y)= \arccos\left(  \dfrac{6-2\left( y\phi'(x)\right)^2}{6+\left( y\phi'(x)\right)^2}\right),\;\; \forall x \in [0,1],\quad y \in J_1.
$$
For almost every $x \in [0,1]$ and all $y \in J_1$, we have
\begin{align*}
\dfrac{\partial \varphi}{\partial y}(x,y)&=\dfrac{-1}{\sqrt{1-\left[ \dfrac{6-2\left( y\phi'(x)\right)^2}{6+\left( y\phi'(x)\right)^2}\right]^2}}\;\dfrac{-36 y \left(\phi'(x)\right)^2}{\left[ 6+\left( y\phi'(x)\right)^2\right]^2}\vspace*{0.25cm}\\
&=\dfrac{36 y \left(\phi'(x)\right)^2}{\sqrt{ \left(12-\left( y\phi'(x)\right)^2\right) \left[ 3\left( y\phi'(x)\right)^2\right]    }}\;\dfrac{1}{ 6+\left( y\phi'(x)\right)^2}\vspace*{0.25cm}\\
&=\dfrac{\phi'(x)}{\sqrt{1-\left(\dfrac{y\phi'(x)}{\sqrt{12}}\right)^2}}\; \dfrac{6}{6+\left( y\phi'(x)\right)^2} \vspace*{0.25cm}\\
& \leq \dfrac{\phi'(x)}{\sqrt{1-\left(\dfrac{\phi'(x)}{\phi'(1)}\right)^2}}  = \dfrac{\phi'(x) \sqrt{\phi'(1)}}{\sqrt{1+\dfrac{\phi'(x)}{\phi'(1)}}} \;\dfrac{1}{\sqrt{\phi'(1)-\phi'(x)}} \vspace*{0.25cm}\\
& \leq \frac{\left(\phi'(1)\right)^{3/2}}{\sqrt{1+\frac{\phi'(0)}{\phi'(1)}}} \;\dfrac{1}{\sqrt{\phi'(1)-\phi'(x)}},
\end{align*}
and
$$
\int_0^1 \dfrac{dx}{\sqrt{\phi'(1)-\phi'(x)}}    \leq \frac{1}{\displaystyle\inf_{[0,1]} \left\{\phi''\right\}} \int_{\phi'(0)}^{\phi'(1)} \dfrac{dx}{\sqrt{\phi'(1)-x}}  < \infty.
$$
Thus, by the Lebesgue Dominated Convergence Theorem, $\Psi_{\phi}^{1} \in C^1(J_1)$, and
\begin{equation}\label{PsiJ1}
\left(\Psi_{\phi}^{1}\right)'(y)=\displaystyle\int_0^1 \dfrac{\partial \varphi}{\partial y}(x,y)\;dx,\quad \forall y\in J_1,\quad \left(\Psi_{\phi}^{1}\right)'(0)=1.
\end{equation}

For the third step, we fix $\epsilon$  in $(0,1)$ and we aim to demonstrate  that $\left(\Psi_{\phi}^{1}\right)'$ is of class $C^{1}\left(J_1^{\epsilon}\right)$, where $$J_1^{\epsilon}=\left(0,\epsilon\frac{\sqrt{12}}{\phi^{'}(1)}  \right).$$

Let $y\in J_1^{\epsilon}$. From (\ref{PsiJ1}), we observe that for every $x\in [0,1]$ the function $\frac{\partial\varphi}{\partial y}$ is differentiable with respect to $y$ and we have 
\begin{align*}
    \frac{\partial^2\varphi}{\partial^2 y}(x,y)&=-6\phi^{'}(x)\dfrac{\frac{-\left(\frac{\phi^{'}(x)}{\sqrt{12}}\right)^2 2y}{2\sqrt{1-\left(\frac{y\phi^{'}(x)}{\sqrt{12}}\right)^2}}\left(6+\left(y\phi^{'}(x)\right)^2\right)+\sqrt{1-\left(\frac{y\phi^{'}(x)}{\sqrt{12}}\right)^2}\;2y\left(\phi^{'}(x)\right)^2}{\left(1-\left(\frac{y\phi^{'}(x)}{\sqrt{12}}\right)^2\right)\left(6+\left(y\phi^{'}(x)\right)^2\right)^2}\\
    &=-6y\left(\phi^{'}(x)\right)^3\dfrac{-\frac{1}{12}\left(6+\left(y\phi^{'}(x)\right)^2\right)+2\left(1-\left(\frac{y\phi^{'}(x)}{\sqrt{12}}\right)^2\right)}{\left(1-\left(\frac{y\phi^{'}(x)}{\sqrt{12}}\right)^2\right)^{\frac{3}{2}}\left(6+\left(y\phi^{'}(x)\right)^2\right)^2}\\
    &=\dfrac{-9y\left(\phi^{'}(x)\right)^3 \left(1-2\left(\frac{y\phi^{'}(x)}{\sqrt{12}}\right)^2\right)}{\left(1-\left(\frac{y\phi^{'}(x)}{\sqrt{12}}\right)^2\right)^{\frac{3}{2}}\left(6+\left(y\phi^{'}(x)\right)^2\right)^2}.
\end{align*}
Observe that $\frac{\partial^2\varphi}{\partial^2 y}$ is continuous over $[0,1]\times J_1^{\epsilon} $, then  $\Psi_{\phi}^{1}\in C^2(J_1^{\epsilon})$, with
\begin{equation}\label{PsipJ1}
\left(\Psi_{\phi}^{1}\right)^{''}(y)=\displaystyle\int_0^1 \dfrac{\partial^2 \varphi}{\partial^2 y}(x,y)\;dx,\quad\forall y\in J_1^{\epsilon}.
\end{equation}

To conclude the proof, let $y\in \left(0,\frac{\sqrt{6}}{\phi^{'}(1)}\right)$. Then, there exists an $\epsilon$ in $(0,1)$ such that 
\begin{align*}
    y&\leq \frac{\sqrt{6}}{\phi^{'}(1)}< \epsilon\frac{\sqrt{12}}{\phi^{'}(1)}\\
    &\Rightarrow y\in J_1^{\epsilon}\;\;\text{ and}\;\; 1-2\left(\frac{y\phi^{'}(x)}{\sqrt{12}}\right)^2\geq 1-\left(\frac{\phi^{'}(x)}{\phi^{'}(1)}\right)^2\geq 0.
\end{align*}
Hence, using the (\ref{PsipJ1}), we obtain 
$$\left(\Psi_{\phi}^{1}\right)^{''}(y)<0,\;\;\forall y\in \left(0,\frac{\sqrt{6}}{\phi^{'}(1)}\right). $$

Finally, the function $\Psi_{\phi}^{1}$ is strictly concave over $\left(0,\frac{\sqrt{6}}{\phi^{'}(1)}\right)$, which ends the proof.
\end{proof}


Using relation \eqref{relatioxiandPsi}, which links $\sqrt{\xi^1_{\phi}}$ with $\Psi^1_{\phi}$, we obtain the following results that characterize the eigenfrequency symbol in the neighborhood of zero.

\begin{corollary}
    Let $\phi$ be a strictly convex reparametrization of the interval $[0,1]$. Then for every $\epsilon\in(0,1)$, the symbol $\sqrt{\xi_{\phi}^1}$ is $C^2$ on $\left(0,\frac{1}{\pi}\Psi_{\phi}^1\left(\epsilon\frac{\sqrt{12}}{\phi^{'}(1)}  \right) \right)$ and strictly convex over $\left(0,\frac{1}{\pi}\Psi_{\phi}^1\left(\frac{\sqrt{6}}{\phi^{'}(1)}\right)\right)$.
\end{corollary}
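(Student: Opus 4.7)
The plan is to deduce the corollary directly from Proposition \ref{propc} by combining it with the representation $\sqrt{\xi_{\phi}^1}(x)=\left(\Psi_{\phi}^{1}\right)^{-1}(\pi x)$ supplied by Proposition \ref{stefa}, together with the inverse function theorem. Since $\Psi_{\phi}^1$ is strictly increasing on $Rg(\sqrt{\omega_\phi^1})$ (as noted just after \eqref{spectarlfrequincy}) and, by the computation in the proof of Proposition \ref{propc}, $\Psi_\phi^1(0)=0$, the function $\Psi_\phi^1$ maps $\bigl(0,\epsilon\tfrac{\sqrt{12}}{\phi'(1)}\bigr)$ bijectively onto $\bigl(0,\Psi_\phi^1(\epsilon\tfrac{\sqrt{12}}{\phi'(1)})\bigr)$, so the interval on which we want $C^2$-regularity of $\sqrt{\xi_\phi^1}$ is precisely the image (divided by $\pi$) of the interval on which Proposition \ref{propc} gives $C^2$-regularity of $\Psi_\phi^1$.

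The first step is to verify that $\bigl(\Psi_\phi^1\bigr)'(y)>0$ on $J_1^\epsilon=\bigl(0,\epsilon\tfrac{\sqrt{12}}{\phi'(1)}\bigr)$. This is immediate from the explicit integrand $\tfrac{\partial\varphi}{\partial y}(x,y)$ computed in the proof of Proposition \ref{propc}, which is a strictly positive function of $x$ for every $y>0$. Applying the $C^2$ inverse function theorem to $\Psi_\phi^1$ on $J_1^\epsilon$ then yields that $\bigl(\Psi_\phi^1\bigr)^{-1}$ is $C^2$ on the image interval. Composing with the affine map $x\mapsto \pi x$ (which is $C^\infty$) gives the desired $C^2$-regularity of $\sqrt{\xi_\phi^1}$ on $\bigl(0,\tfrac{1}{\pi}\Psi_\phi^1(\epsilon\tfrac{\sqrt{12}}{\phi'(1)})\bigr)$.

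For the strict convexity, the plan is to use the classical second-derivative identity
\[
\bigl(\Psi_\phi^1\bigr)^{-1\,\prime\prime}(z) \;=\; -\,\frac{\bigl(\Psi_\phi^1\bigr)^{\prime\prime}\!\bigl(\bigl(\Psi_\phi^1\bigr)^{-1}(z)\bigr)}{\Bigl[\bigl(\Psi_\phi^1\bigr)^{\prime}\!\bigl(\bigl(\Psi_\phi^1\bigr)^{-1}(z)\bigr)\Bigr]^{3}}.
\]
Proposition \ref{propc} gives $\bigl(\Psi_\phi^1\bigr)'>0$ and $\bigl(\Psi_\phi^1\bigr)''<0$ on $\bigl(0,\tfrac{\sqrt{6}}{\phi'(1)}\bigr)$, so the right-hand side is strictly positive on the image interval $\bigl(0,\Psi_\phi^1(\tfrac{\sqrt{6}}{\phi'(1)})\bigr)$. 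Therefore $\bigl(\Psi_\phi^1\bigr)^{-1}$ is strictly convex there, and since $\sqrt{\xi_\phi^1}(x)=\bigl(\Psi_\phi^1\bigr)^{-1}(\pi x)$ we have $(\sqrt{\xi_\phi^1})''(x)=\pi^2 \bigl(\Psi_\phi^1\bigr)^{-1\,\prime\prime}(\pi x)>0$ on $\bigl(0,\tfrac{1}{\pi}\Psi_\phi^1(\tfrac{\sqrt{6}}{\phi'(1)})\bigr)$, as required.

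There is essentially no hard step here; the entire corollary is a transfer of the regularity and concavity information for $\Psi_\phi^1$ through the inverse-function identity \eqref{relatioxiandPsi}. The only thing to be careful about is the positivity of $\bigl(\Psi_\phi^1\bigr)'$ throughout the relevant interval, which is the hypothesis that legitimizes both the use of the inverse function theorem and the sign computation in the convexity step; once this is noted, both assertions drop out at once.
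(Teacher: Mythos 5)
Your proposal is correct and takes essentially the same approach the paper sketches: the paper's proof is a two-line remark that the result "follows directly from Proposition \ref{propc}" by combining \eqref{relatioxiandPsi} with the positivity of $\left(\Psi_{\phi}^{1}\right)'$ on $J_1^\epsilon$ (from \eqref{PsiJ1}), and you have simply supplied the omitted details — the $C^2$ inverse function theorem for the regularity claim and the standard second-derivative identity for the inverse for the convexity claim — with the correct sign bookkeeping.
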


\begin{proof}
    The result follows directly from the above Proposition \ref{propc}. We combine (\ref{relatioxiandPsi}) with the fact that $\left(\Psi_{\phi}^{1}\right)'(y)>0$ over $\left(0,\epsilon\frac{\sqrt{12}}{\phi^{'}(1)}  \right)$ (see (\ref{PsiJ1})). For brevity, the details are omitted.
\end{proof}

We are now in a position to state the main result of this section. Using Proposition \ref{stefa} and relation \eqref{relatioxiandPsi}, the main idea of the theorem below centers on establishing $\left(\Psi_{\phi}^{p}\right)'(0)\neq 0$ for all $p\geq 1$. In this regard, it is important to note the significance of the above analysis for the case $p=1$. First, from Proposition \ref{propc}, we have $\left(\Psi_{\phi}^{1}\right)'(0)=1$; second, the techniques used to derive the general result for $p\geq1$ are feasible only when $p\geq2$. 

\begin{theorem}\label{symbolnear0}
    Consider $p\in \mathbb{N}^{*}$ and $\phi\in \mathbf{C}_{[0,1]}$ to be a convex reparametrization  of $[0,1]$. Then, the symbol $\sqrt{\xi_{\phi}^p}$ is linear in the neighborhood of zero. Precisely, we have 
    \begin{equation}\label{assymptoticatzero}
        \sqrt{\xi_{\phi}^p}(x)\sim_{0^+} \gamma x,
    \end{equation}
where, 
\begin{equation}\label{gammadif}
    \gamma:= \frac{\pi}{\left(\Psi_{\phi}^{p}\right)'(0)}>0.
\end{equation}
\end{theorem}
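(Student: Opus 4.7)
The plan is to reduce the statement to an asymptotic for $\Psi_{\phi}^{p}$ at the origin via the inverse relation \eqref{relatioxiandPsi} from Proposition \ref{stefa}. Specifically, if I can show that the one-sided derivative
\[
(\Psi_{\phi}^{p})'(0^+)=\lim_{y\to 0^+}\frac{\Psi_{\phi}^{p}(y)}{y}
\]
exists, is finite, and is strictly positive, then $\Psi_{\phi}^{p}(y)\sim (\Psi_{\phi}^{p})'(0^+)\,y$ inverts to $(\Psi_{\phi}^{p})^{-1}(z)\sim z/(\Psi_{\phi}^{p})'(0^+)$ as $z\to 0^+$, and substituting $z=\pi x$ into \eqref{relatioxiandPsi} gives exactly \eqref{assymptoticatzero} with the constant $\gamma$ defined in \eqref{gammadif}. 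The finiteness of $(\Psi_{\phi}^{p})'(0^+)$ is free from the uniform bound $\sup_{y\geq 0}(\Psi_{\phi}^{p})'(y)<\infty$ already stated in Proposition \ref{stefa}, so the real content is positivity (and, as a byproduct, an explicit evaluation).

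To compute $(\Psi_{\phi}^{p})'(0^+)$ I would inner-integrate in $\theta$, exploiting that $e_p$ is nonnegative and monotonically increasing on $[0,\pi]$ (Corollary \ref{eppropreties}) and that $\phi'$ is bounded and bounded away from zero on $[0,1]$ by continuity and positivity. For $y>0$ sufficiently small so that $y\,\phi'(x)\leq y\,\phi'(1)$ lies well inside the range of $\sqrt{e_p}$ on $[0,\pi]$, the condition $\sqrt{\omega_{\phi}^p(x,\theta)}\leq y$ rewrites as $\theta\leq e_p^{-1}\!\bigl((y\phi'(x))^2\bigr)$, so Fubini yields
\[
\Psi_{\phi}^{p}(y)=\int_0^1 e_p^{-1}\!\bigl((y\phi'(x))^2\bigr)\,dx.
\]
The asymptotic $e_p(\theta)\sim_{\theta\to 0^+}\theta^2$ (Corollary \ref{eppropreties}) inverts to $e_p^{-1}(u)\sim\sqrt{u}$ as $u\to 0^+$, so that for every fixed $x$,
\[
\frac{e_p^{-1}\!\bigl((y\phi'(x))^2\bigr)}{y}\;\xrightarrow[y\to 0^+]{}\;\phi'(x).
\]

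To pass the limit inside the integral I would invoke dominated convergence, with a uniform bound of the form $e_p^{-1}(u)\leq C\sqrt{u}$ for $u$ in a right neighborhood of $0$ (immediate from the above asymptotic), which gives $e_p^{-1}\!\bigl((y\phi'(x))^2\bigr)/y\leq C\,\|\phi'\|_{L^\infty([0,1])}$ uniformly in $x$ for all small $y$. Hence
\[
(\Psi_{\phi}^{p})'(0^+)=\int_0^1\phi'(x)\,dx=\phi(1)-\phi(0)=1>0,
\]
where the last step uses the normalization built into $\mathbf{C}_{[0,1]}$. With positivity established, inverting the asymptotic as above produces \eqref{assymptoticatzero}.

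The main technical step is really the justification of the dominated-convergence argument together with the validity of the Fubini identity for $\Psi_{\phi}^{p}$ on a whole right neighborhood of $y=0$; both are routine once one observes that for sufficiently small $y$ the defining level set of $\sqrt{\omega_{\phi}^p}$ does not meet the boundary $\theta=\pi$, so the integrand $e_p^{-1}\!\bigl((y\phi'(x))^2\bigr)$ is well defined and continuous in $(x,y)$. Everything else (the explicit evaluation $(\Psi_{\phi}^{p})'(0^+)=1$, hence $\gamma=\pi$, and the inversion of asymptotics for the monotone continuous function $\Psi_{\phi}^{p}$) is a short deduction from Proposition \ref{stefa} and Corollary \ref{eppropreties}.
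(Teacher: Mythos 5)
Your proof is correct, but it takes a genuinely different and in fact sharper route than the paper. The paper's proof splits into two cases: for $p=1$ it invokes the explicit computation of Proposition \ref{propc} (itself a separate DCT argument on a formula for $\Psi_{\phi}^{1}$) to get $(\Psi_{\phi}^{1})'(0)=1$, while for $p\geq 2$ it uses the inequalities \eqref{eq:preliminaries-1} to squeeze $\sqrt{e_p}$ between multiples of $2\sin(\theta/2)$, obtains $\Psi_1\leq\Psi_{\phi}^{p}\leq\Psi_2$ for two auxiliary functions whose right derivatives at $0$ are quoted from Proposition~1 of \cite{bianchi2018spectral}, and concludes only that $(\Psi_{\phi}^{p})'(0)$ lies in the interval $\bigl[(2/\pi)^{(p+1)/2},\,(\pi/2)^{(p-1)/2}\bigr]$, hence is positive. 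Your argument instead treats all $p\geq 1$ uniformly: you rewrite $\Psi_{\phi}^{p}(y)=\int_0^1 e_p^{-1}\bigl((y\phi'(x))^2\bigr)\,dx$ by Fubini, use $e_p(\theta)\sim\theta^2$ from Corollary \ref{eppropreties}, and pass to the limit by dominated convergence. This buys two things. First, it produces the exact value $(\Psi_{\phi}^{p})'(0)=\int_0^1\phi'=1$, hence $\gamma=\pi$ independently of $p$ — a strictly stronger and cleaner conclusion than the paper's, which leaves $\gamma$ only implicitly determined for $p\geq 2$. Second, it is more rigorous on the existence of the one-sided derivative: the paper's squeeze on $\Psi_{\phi}^{p}(y)/y$ by itself only constrains the $\liminf$ and $\limsup$, whereas your DCT computation directly delivers the limit. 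The only small point to flag (which you in effect handle) is that $e_p^{-1}$ must be well defined near $0$, i.e.\ $e_p$ strictly increasing there; this follows from $e_p(\theta)\sim\theta^2$ and the monotonicity in Corollary \ref{eppropreties}, and is implicitly assumed throughout the paper as well.
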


\begin{proof} 

 We will divide this proof into two cases. The first case is when $p=1$. From \eqref{relatioxiandPsi}, we have 
  \begin{equation*}
\sqrt{\xi_{\phi}^1}(x)=\left(\Psi_{\phi}^{1}\right)^{-1}(\pi x),\quad \forall x\in[0,1].
\end{equation*}
Then, using Proposition \ref{stefa} and  the fact that $\left(\Psi_{\phi}^{1}\right)'(0)=1\neq 0$ ( see Proposition \ref{propc}) and $\Psi_{\phi}^{1}(0)=0$, the symbol  $\sqrt{\xi_{\phi}^1}$ is differentiable at $0$, and we have
$$
\left(\sqrt{\xi_{\phi}^1}\right)^{'}(0)=\pi \dfrac{1}{\left(\Psi_{\phi}^{1}\right)^{'}\left(\left(\Psi_{\phi}^{1}\right)^{-1}(0)\right)}.
$$
This yields (\ref{assymptoticatzero}) and ends the proof for $p=1.$
   
Now we focus on the case $p\geq 2$. Using relations \eqref{eq:preliminaries-1} from Subsection \ref{symbol}, we derive the following inequalities for all $\theta\in[0,\pi]$
$$
2\sin\left(\frac{\theta}{2}\right)\left(\frac{2}{\pi}\right)^{(p-1)/2}\leq\sqrt{e_p(\theta)}\leq 2\sin\left(\frac{\theta}{2}\right)\left(\frac{\pi}{2}\right)^{(p+1)/2},\quad \forall \theta\in[0,\pi].
$$
Given that for all $y\in Rg\left(\sqrt{\omega^p_{\phi}}\right) $, we have
$$
\Psi_{\phi}^{p}(y)=\mu_{2} \left( \left\{  (x,\theta)\in [0,1]\times[0,\pi]:\quad \frac{\sqrt{e_p(\theta})}{\phi^{'}(x)}\leq y    \right\} \right),
$$
we deduce 
\begin{equation}\label{eq:linb-1}
\Psi_{1}(y)\leq \Psi_{\phi}^{p}(y)\leq \Psi_{2}(y),
\end{equation}
where
$$
\Psi_{1}(y)=\mu_{2} \left( \left\{  (x,\theta)\in [0,1]\times[0,\pi]:\quad \frac{2\sin\left(\frac{\theta}{2}\right)\left(\frac{\pi}{2}\right)^{(p+1)/2}}{\phi^{'}(x)}\leq y    \right\} \right),
$$
and 
$$
\Psi_{2}(y)=\mu_{2} \left( \left\{  (x,\theta)\in [0,1]\times[0,\pi]:\quad \frac{2\sin\left(\frac{\theta}{2}\right)\left(\frac{2}{\pi}\right)^{(p-1)/2}}{\phi^{'}(x)}\leq y    \right\} \right).
$$
According to \cite[Proposition 1]{bianchi2018spectral}, $\Psi_1$ and $\Psi_2$ are right-differentiables at $0$, and using relation (37) in the proof \cite[Proposition 1]{bianchi2018spectral}, we obtain 
$$
\Psi_1'(0) = \left( \frac{2}{\pi} \right)^{(p+1)/2}, \quad \text{and} \quad \Psi_2'(0) = \left( \frac{\pi}{2} \right)^{(p-1)/2}.
$$
Hence, by dividing both sides of \eqref{eq:linb-1} by $y$ and taking the limit as $y \rightarrow 0^+$, we get
$$
0<\left(\frac{2}{\pi}\right)^{(p+1)/2}\leq \left(\Psi_{\phi}^{p}\right)'(0)\leq\left(\frac{\pi}{2}\right)^{(p-1)/2}.
$$
Using \eqref{relatioxiandPsi} and $\Psi_{\phi}^{p}(0)=0$ ( see \eqref{largpsidif}), we obtain 
$$
\left(\sqrt{\xi_{\phi}^p}\right)'(0)=\pi \dfrac{1}{\left(\Psi_{\phi}^{p}\right)'\left(\left(\Psi_{\phi}^{p}\right)^{-1}(0)\right)},
$$ 
which concludes the proof of the theorem.
\end{proof}
\section{Impact of reparametrization on the behavior of eigenfrequencies}\label{sec4:impactofmapping}
When choosing a reparametrization $\phi_1$ in $\mathbf{C}_{[0,1]}$, we generate through IGA discretization (see Subsection \ref{sec:descritization}) a family of eigenfrequencies, denoted as $\left(\sqrt{\lambda_{k,h}^{\phi_1}}\right)_{k \in \mathcal{I}(p,n)}$, which depends, by construction, on the mapping $\phi_1$. Similarly, when using another reparametrization $\phi_2\in\mathbf{C}_{[0,1]}$, we generate a new family of eigenfrequencies, denoted as $\left(\sqrt{\lambda_{k,h}^{\phi_2}}\right)_{k \in \mathcal{I}(p,n)}$. In this section, we aim to analyze the impact of changing the mapping on the distribution of eigenfrequencies. Specifically, we aim to locate each family with respect to the other. Additionally, we want to understand if choosing a reparametrization that results in a particular property of the symbol influences the behavior of the eigenfrequencies. We begin our analysis with a lemma that allows us to approximate the non-outlier eigenfrequencies by the symbol sampling. Following this, we establish an ordering relation between the eigenfrequencies and the reparametrizations set $\mathbf{C}_{[0,1]}$. Finally, we analyze how the symbol's convexity impacts the eigenfrequency behavior.

In this part, we chose to present our findings using conditions on $\Psi^p_{\phi}$ due to its simpler definition compared to $\sqrt{\xi_{\phi}^p}$. However, it is important to note that the two are linked through \eqref{relatioxiandPsi}.

\begin{lemma}\label{approximationwithoutout}
Let $\phi\in\mathbf{C}_{[0,1]}$, such that  $\sqrt{n^{-2}L_{\phi,n}^p}\sim_{\lambda} \sqrt{ \xi^p_{\phi}}$. Then
\begin{equation}\label{eq:lemma-ditribution-eig-out}
\lim_{n \rightarrow +\infty}\;\max_{k \in \mathcal{I}(p,n)}\left|    \sqrt{n^{-2}\lambda_{k,h}^{\phi}}   -  \left(\Psi^{p}_{\phi}\right)^{-1}\left(\frac{k\pi}{N+1}\right)\right|= 0.
\end{equation}
\end{lemma}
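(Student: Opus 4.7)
The plan is to argue by contradiction. If the conclusion fails, one can extract $\epsilon > 0$, a subsequence $n_j \to +\infty$, and indices $k_j \in \mathcal{I}(p, n_j)$ such that
\[
\Bigl|\sqrt{n_j^{-2}\lambda_{k_j,h}^{\phi}} - (\Psi^p_\phi)^{-1}\bigl(k_j\pi/(N_j+1)\bigr)\Bigr| \geq \epsilon.
\]
By compactness, pass to a further subsequence with $k_j/(N_j+1) \to x \in [0,1]$. The goal is then to show that both terms tend to the common value $\sqrt{\xi^p_\phi}(x)$, producing the contradiction.

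First I would gather the continuity ingredients. By Proposition \ref{stefa}, $\Psi^p_\phi$ is $C^1$ on $(0,+\infty)$, and by the remark following \eqref{spectarlfrequincy}, it is strictly increasing on $Rg(\sqrt{\omega^p_\phi})$. Because $\sqrt{\omega^p_\phi}$ is continuous on the compact rectangle $[0,1]\times[0,\pi]$, its range is a compact interval whose left endpoint is $0$ (using $e_p(0)=0$ from Corollary \ref{eppropreties}). Hence $\Psi^p_\phi$ is a continuous bijection from $Rg(\sqrt{\omega^p_\phi})$ onto $[0,\pi]$, so $(\Psi^p_\phi)^{-1}$ is continuous on $[0,\pi]$, and via \eqref{relatioxiandPsi} the monotone rearrangement $\sqrt{\xi^p_\phi}$ is continuous on $[0,1]$. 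Continuity of $(\Psi^p_\phi)^{-1}$ immediately yields $(\Psi^p_\phi)^{-1}\bigl(k_j\pi/(N_j+1)\bigr) \longrightarrow \sqrt{\xi^p_\phi}(x)$ for the sampling term.

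For the eigenvalue term, the crucial fact is that $k_j \in \mathcal{I}(p, n_j)$ forces $\sqrt{n_j^{-2}\lambda_{k_j,h}^\phi} \in R_{\sqrt{\omega^p_\phi}}$ by the very definition \eqref{setofnooutliers} of the non-outlier indices. Applying the second part of Theorem \ref{th1} (equation \eqref{discretweylslaw}) to the GLT relation \eqref{spectarlfrequincy}, with the continuity of $\Psi^p_\phi$ and $\sqrt{\xi^p_\phi}$ already established and with the non-outlier membership verified along the subsequence, yields $\sqrt{n_j^{-2}\lambda_{k_j,h}^\phi} \longrightarrow \sqrt{\xi^p_\phi}(x)$. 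Comparing with the sampling limit contradicts the $\epsilon$-separation.

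The main delicate point is the bookkeeping check that \eqref{discretweylslaw} applies at the boundary values $x \in \{0,1\}$ (the theorem is stated on the closed interval $[0,1]$, but one must verify the non-outlier hypothesis is preserved along every extracted subsequence, in particular when $k_j/(N_j+1)\to 0$ or approaches the upper edge $M_{n_j}/(N_j+1)\to 1$ permitted by $OUT(p,n)=o(N)$). Once this is confirmed, the compactness-contradiction scheme transfers the pointwise convergence of discrete Weyl's law into the required uniform convergence over $\mathcal{I}(p,n)$.
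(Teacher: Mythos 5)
Your proof is correct and takes essentially the same approach as the paper: argue by contradiction, extract a subsequence of non-outlier indices whose normalized positions converge, and invoke the second part of discrete Weyl's law \eqref{discretweylslaw} (together with the continuity of $(\Psi^p_\phi)^{-1}$ and the identity \eqref{relatioxiandPsi}) to force both terms to the common limit $\sqrt{\xi^p_\phi}(x)$. The paper's version is terser, deferring to Corollary 3.3 of \cite{bianchi2021analysis}, but the underlying argument is identical to yours.
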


\begin{proof}
    The proof follows a similar approach to that used in the proof of Corollary 3.3 in \cite{bianchi2021analysis}. We assume that (\ref{eq:lemma-ditribution-eig-out}) is false. Under this assumption, we observe that all subsequences $(k(n))_{n\geq1}$ are contained within $\mathcal{I}(p,n)$, which implies, by construction, that $\left( \sqrt{n^{-2}\lambda_{k(n),h}^{\phi}}\right)_{n\geq1}$ is contained within $Rg\left(\sqrt{\omega_{\phi}^p}\right)$. Applying (\ref{discretweylslaw}) directly leads to a contradiction, which completes the proof.
\end{proof}

We now present the first main result of this section.
\begin{theorem}[Distribution of eigenvalues]\label{tdis1}
Let $\phi_1, \phi_2 \in \mathbf{C}_{[0,1]}$, and let $I$ be a closed interval in  $Rg(\omega_{\phi_1}^p) \cap Rg(\omega_{\phi_2}^p)$. If, for all $y \in I$, it holds
\begin{equation}\label{hypoonordeing}
  \Psi_{\phi_1}^{p}(\sqrt{y})>\Psi_{\phi_2}^{p}(\sqrt{y}). 
\end{equation}

Then, there exists $  n_0\in\mathbb{N}^{*}$,  such that for all $n\geq n_0$  and $k\in \mathbb{N}^{*}$, if \\$n^{-2}\lambda_{k,h}^{\phi_1},n^{-2}\lambda_{k,h}^{\phi_2}\in I$, we have 
$$ 
\sqrt{\lambda_{k,h}^{\phi_1}}<\sqrt{\lambda_{k,h}^{\phi_2}},
$$ 
where, for $i=1,2$, $\left(\sqrt{\lambda_{k,h}^{\phi_i}}\right)_k$ are the eigenfrequencies generated when using $\phi_i$ as a reparametrization.
\end{theorem}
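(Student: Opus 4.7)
The plan is to use Lemma \ref{approximationwithoutout} to replace the two families of eigenfrequencies by uniform samples of the inverses $(\Psi_{\phi_i}^p)^{-1}$, and then deduce the strict inequality from the assumed pointwise gap between $\Psi_{\phi_1}^p$ and $\Psi_{\phi_2}^p$ on $\sqrt{I}$, using the strict monotonicity of $\Psi_{\phi_2}^p$ on the range of $\sqrt{\omega_{\phi_2}^p}$.

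First, I would exploit compactness to upgrade the pointwise hypothesis to a uniform one. Since $\sqrt{I}$ is a closed subinterval of $Rg(\sqrt{\omega_{\phi_1}^p}) \cap Rg(\sqrt{\omega_{\phi_2}^p})$, and the map $z \mapsto \Psi_{\phi_1}^p(z) - \Psi_{\phi_2}^p(z)$ is continuous (Proposition \ref{stefa}) and strictly positive on it by hypothesis, it attains a strictly positive minimum
\[
\delta := \min_{z \in \sqrt{I}} \bigl( \Psi_{\phi_1}^p(z) - \Psi_{\phi_2}^p(z) \bigr) > 0.
\]
Proposition \ref{stefa} also supplies a uniform Lipschitz constant
\[
M := \max_{i=1,2} \sup_{y \geq 0} \bigl( \Psi_{\phi_i}^p \bigr)'(y) < \infty,
\]
which lets us transfer errors from the argument of $\Psi_{\phi_i}^p$ to its value through the mean value theorem.

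Next, I apply Lemma \ref{approximationwithoutout} to both $\phi_1$ and $\phi_2$: there exists $n_0$ such that for all $n \geq n_0$ and all $k \in \mathcal{I}(p,n)$,
\[
\Bigl| \sqrt{n^{-2}\lambda_{k,h}^{\phi_i}} - (\Psi_{\phi_i}^p)^{-1}\bigl( k\pi/(N+1) \bigr) \Bigr| < \frac{\delta}{3M}, \quad i = 1,2.
\]
For any index $k$ with $n^{-2}\lambda_{k,h}^{\phi_i} \in I \subset Rg(\omega_{\phi_i}^p)$, the corresponding eigenvalue lies in the essential range of the symbol, hence is non-outlier and $k \in \mathcal{I}(p,n)$. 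Setting $b_i := \sqrt{n^{-2}\lambda_{k,h}^{\phi_i}} \in \sqrt{I}$, the Lipschitz bound yields $\bigl| \Psi_{\phi_i}^p(b_i) - k\pi/(N+1) \bigr| < \delta/3$, and then evaluating the hypothesis at the point $b_1 \in \sqrt{I}$ gives
\[
\Psi_{\phi_2}^p(b_1) \leq \Psi_{\phi_1}^p(b_1) - \delta < \frac{k\pi}{N+1} + \frac{\delta}{3} - \delta = \frac{k\pi}{N+1} - \frac{2\delta}{3} < \Psi_{\phi_2}^p(b_2).
\]
Strict monotonicity of $\Psi_{\phi_2}^p$ on $Rg(\sqrt{\omega_{\phi_2}^p})$ (noted just after \eqref{spectarlfrequincy}) then forces $b_1 < b_2$, which is exactly $\sqrt{\lambda_{k,h}^{\phi_1}} < \sqrt{\lambda_{k,h}^{\phi_2}}$.

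The main obstacle I anticipate is ensuring that the threshold $n_0$ can be chosen independently of the index $k$; this is precisely what the $\max_{k \in \mathcal{I}(p,n)}$ in Lemma \ref{approximationwithoutout} delivers, so the substantive step is calibrating the single tolerance $\delta/(3M)$ that balances against the gap $\delta$. A secondary but worth-checking subtlety is confirming that every index $k$ obeying the hypothesis $n^{-2}\lambda_{k,h}^{\phi_i} \in I$ is automatically non-outlier, which follows from the definition of outliers together with $I \subset Rg(\omega_{\phi_i}^p)$ for $i=1,2$.
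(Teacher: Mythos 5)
Your proof is correct and follows essentially the same route as the paper: both rest on Lemma \ref{approximationwithoutout}, the Lipschitz bound $\sup_y (\Psi_{\phi_i}^p)'(y) < \infty$ from Proposition \ref{stefa}, and the positive minimum gap on the compact interval $\sqrt{I}$. The only organizational difference is that the paper packages the intermediate uniform estimate as a standalone lemma comparing the quantities $z_{k,h}^{\phi_i} = \Psi_{\phi_i}^p\bigl(\sqrt{n^{-2}\lambda_{k,h}^{\phi_i}}\bigr)$ and then argues by contradiction, whereas you chain the inequalities around the common sampling point $k\pi/(N+1)$ directly and conclude via strict monotonicity of $\Psi_{\phi_2}^p$.
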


\begin{proof}
We will begin by proving the following lemma:
\begin{lemma}\label{theo:lemma-ditribution-eig-1}
For all $n \in \mathbb{N}^*$ and $k \in \mathcal{I}^{1,2}(p,n)=\mathcal{I}^{\phi_1}(n,p)\cap\mathcal{I}^{\phi_2}(n,p)$, there exist unique $z_{k,h}^{\phi^1}$ and $z_{k,h}^{\phi^2}$ in the interval $[0, \pi]$ such that:
$$\sqrt{n^{-2}\lambda_{k,h}^{\phi_1}}= \left(\Psi^{p}_{\phi_1}\right)^{-1}(z_{k,h}^{\phi_1}),\; \sqrt{n^{-2}\lambda_{k,h}^{\phi_2}}= \left(\Psi^{p}_{\phi_2}\right)^{-1}(z_{k,h}^{\phi_2})$$
and 
$$
\lim_{n\rightarrow +\infty}\; \max_{k \in \mathcal{I}^{1,2}(n,p)} \left| z_{k,h}^{\phi_1}-   z_{k,h}^{\phi_2}  \right|=0.
$$
\end{lemma}

\begin{proof} (of Lemma \ref{theo:lemma-ditribution-eig-1}).
Let $n \in \mathbb{N}^*$ and $k \in \mathcal{I}^{1,2}(p,n)$. Then, $n^{-2}\lambda_{k,h}^{\phi_1} \in Rg (\omega_{\phi_1}^p )$, $n^{-2}\lambda_{k,h}^{\phi_2} \in Rg (\omega_{\phi_2}^p )$. Since, for $i=1,2$, the function $\left(\Psi^{p}_{\phi_i}\right)^{-1}$ is continuous and strictly increasing , there exist unique $z_{k,h}^{\phi^1}$ and $z_{k,h}^{\phi^2}$ in the interval $[0, \pi]$ such that:
$$
\sqrt{n^{-2}\lambda_{k,h}^{\phi_1}}= \left(\Psi^{p}_{\phi_1}\right)^{-1}(z_{k,h}^{\phi_1}),
$$
and 
$$
\sqrt{n^{-2}\lambda_{k,h}^{\phi_2}}= \left(\Psi^{p}_{\phi_2}\right)^{-1}(z_{k,h}^{\phi_2}).
$$
With Lemma \ref{approximationwithoutout}, we obtain
\begin{equation}\label{eq:lemma-ditribution-eig-1}
\lim_{n \rightarrow +\infty}\;\max_{k \in \mathcal{I}^{\phi_i}(p,n)}\left|    \sqrt{n^{-2}\lambda_{k,h}^{\phi_i}}   -  \left(\Psi^{p}_{\phi_i}\right)^{-1}\left(\frac{k\pi}{N+1}\right)\right|= 0,\quad i \in \{1,2 \}.
\end{equation}
Using the mean value theorem and Proposition \ref{stefa}, we can further deduce that for $ i \in \{1,2 \}$
\begin{align*}
&\max_{k \in\mathcal{I}^{1,2}(p,n)}\left|  z_{k,h}^{\phi_i} -\frac{k\pi}{N+1} \right |\\
&\leq \left| \sup_{y \geq 0} \left(\Psi^{p}_{\phi_i}\right)'(y)\right| \max_{k \in \mathcal{I}^{1,2}(p,n)}\left|  \sqrt{n^{-2}\lambda_{k,h}^{\phi_i}}   -  \left(\Psi^{p}_{\phi_i}\right)^{-1}\left(\frac{k\pi}{N+1}\right) \right |,
\end{align*}
which concludes the proof of Lemma \ref{theo:lemma-ditribution-eig-1} using \eqref{eq:lemma-ditribution-eig-1}.
\end{proof}
Now, we can proceed to the proof of the theorem. Let $(z_{k,h}^{\phi^1})$ and $(z_{k,h}^{\phi^2})$ be the two sequences constructed in Lemma \ref{theo:lemma-ditribution-eig-1}, and let $\varepsilon = \min_{y\in I} \left( \Psi_{\phi_1}^{p}(\sqrt{y})- \Psi^{p}_{\phi_2}(\sqrt{y})\right)$. Note that the continuity of $y \mapsto \Psi^{p}_{\phi_i}(y)$, $i=1,2$, ensures that $\varepsilon > 0$, hence, using Lemma \ref{theo:lemma-ditribution-eig-1}, there exists $n^0 \in \mathbb{N}^*$, such that:
\begin{equation}\label{eq:lemma-ditribution-eig-2}
\max_{k \in \mathcal{I}^{1,2}(p,n)} \left| z_{k,h}^{\phi_1}-   z_{k,h}^{\phi_2}  \right|< \frac{\varepsilon}{2}, \quad \forall n \geq n^0.
\end{equation}
Now, let $n \geq n^0$ and $k \in \mathbb{N}^*$ such that $n^{-2}\lambda_{k,h}^{\phi_1},n^{-2}\lambda_{k,h}^{\phi_2}\in I$, which implies by definition of $I$ that $k\in \mathcal{I}^{1,2}(p,n)$. To conclude the proof, it is sufficient to prove that
$$
\sqrt{n^{-2}\lambda_{k,h}^{\phi_1}}<\sqrt{n^{-2}\lambda_{k,h}^{\phi_2}}.
$$
In fact, if we suppose the contrary $\left(\sqrt{n^{-2}\lambda_{k,h}^{\phi_1}}\geq \sqrt{n^{-2}\lambda_{k,h}^{\phi_2}}\right)$, we obtain
$$ 
\left| z_{k,h}^{\phi_1}-   z_{k,h}^{\phi_2}  \right|= \left| \Psi_{\phi_1}^{p}\left(\sqrt{n^{-2}\lambda_{k,h}^{\phi_1} }\right)-\Psi^{p}_{\phi_2}\left(\sqrt{n^{-2}\lambda_{k,h}^{\phi_2}} \right)\right|<\frac{\varepsilon}{2},
$$
and using increasing property of $\Psi_{\phi_i}^p$, $i=1,2$ and the condition \eqref{hypoonordeing}, we get
$$
\Psi_{\phi_1}^{p}\left(\sqrt{n^{-2}\lambda_{k,h}^{\phi_1}} \right)\geq \Psi_{\phi_1}^{p}\left(\sqrt{n^{-2}\lambda_{k,h}^{\phi_2}} \right)\geq \Psi^{p}_{\phi_2}\left(\sqrt{n^{-2}\lambda_{k,h}^{\phi_2}} \right), $$ $$\quad \Psi^{p}_{\phi_2}\left(\sqrt{n^{-2}\lambda_{k,h}^{\phi_2}} \right)\leq \Psi^{p}_{\phi_2}\left(\sqrt{n^{-2}\lambda_{k,h}^{\phi_1}} \right).
$$
This yields
$$
\left| z_{k,h}^{\phi_1}-   z_{k,h}^{\phi_2}  \right|\geq \Psi_{\phi_1}^{p}\left(\sqrt{n^{-2}\lambda_{k,h}^{\phi_1}} \right)-\Psi^{p}_{\phi_2}\left(\sqrt{n^{-2}\lambda_{k,h}^{\phi_1}} \right)\geq  \varepsilon, 
$$
which contradicts \eqref{eq:lemma-ditribution-eig-2}. This concludes the proof.
\end{proof}

The previous theorem established a crucial relationship: it demonstrated that the ordering of eigenfrequencies is precisely the inverse of the ordering of the functions $\Psi_{\phi_1}^{p}$ and $\Psi_{\phi_2}^{p}$. However, it is important to note that accessing and manipulating the functions $\Psi_{\phi_1}^{p}$ and $\Psi_{\phi_2}^{p}$ can be complex and difficult in practice. Therefore, there is a need for a more general relationship that connects the ordering of reparametrizations to the order of the associated eigenfrequencies.

The following theorem demonstrates the possibility of exerting control over the distribution of eigenfrequencies through the reparametrization set $\mathbf{C}_{[0,1]}$, offering a valuable tool for managing their behavior more effectively.

.

\begin{theorem} \label{dis1}
Let $\phi_1, \phi_2 \in \mathbf{C}_{[0,1]}$ that are strictly convex, and  $\phi_1'(0) = \phi_2'(0)$. Let $x_0 \in (0, 1)$ to be the first zero of the function $\phi_1' - \phi_2'$. Then, if $$\phi_1'(x) \geq \phi_2'(x),\quad \forall x \in [0, x_0], $$ we have, 
$$ 
\Psi_{\phi_1}^{p}(\sqrt{y}) > \Psi_{\phi_2}^{p}(\sqrt{y}), \quad \forall y \in\left(\frac{e_p(\pi)}{(\phi_1'(x_0))^2}, \frac{e_p(\pi)}{(\phi_1'(0))^2}\right).
$$
\end{theorem}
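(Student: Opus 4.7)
The plan is to reduce $\Psi_{\phi}^{p}(\sqrt{y})$ to a one-dimensional integral in $x$ using the monotonicity of $e_p$ (Corollary \ref{eppropreties}). Indeed, for each fixed $x$, the set $\{\theta\in[0,\pi]:\, e_p(\theta)\leq y(\phi'(x))^2\}$ is the interval $[0,\pi]$ whenever $y(\phi'(x))^2\geq e_p(\pi)$, and otherwise equals $[0,e_p^{-1}(y(\phi'(x))^2)]$. Splitting $[0,1]$ according to which regime the point $x$ falls in, I obtain
$$
\Psi_{\phi}^{p}(\sqrt{y})=\int_{\{x:\, (\phi'(x))^2\leq e_p(\pi)/y\}} e_p^{-1}\!\bigl(y(\phi'(x))^2\bigr)\,dx+\pi\,\mu_1\!\bigl(\{x:\, (\phi'(x))^2>e_p(\pi)/y\}\bigr),
$$
which will be the working formula for both $\phi_1$ and $\phi_2$.

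Next I would locate the transition points. Setting $T=\sqrt{e_p(\pi)/y}$, the hypothesis $y\in\bigl(e_p(\pi)/(\phi_1'(x_0))^2,\,e_p(\pi)/(\phi_1'(0))^2\bigr)$ translates exactly into $\phi_1'(0)<T<\phi_1'(x_0)=\phi_2'(x_0)$, so strict convexity of each $\phi_i$ makes $\phi_i'$ a strictly increasing homeomorphism and delivers unique $x_i=(\phi_i')^{-1}(T)$ lying in $(0,x_0)$. Because $x_0$ is the \emph{first} zero of $\phi_1'-\phi_2'$ in $(0,1)$ and this difference is $\geq 0$ on $[0,x_0]$, it is strictly positive on $(0,x_0)$; in particular $\phi_2'(x_1)<\phi_1'(x_1)=T=\phi_2'(x_2)$, which forces $x_1<x_2$.

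The difference then decomposes cleanly as
$$
\Psi_{\phi_1}^{p}(\sqrt{y})-\Psi_{\phi_2}^{p}(\sqrt{y})=\int_0^{x_1}\!\Bigl[e_p^{-1}\!\bigl(y(\phi_1'(x))^2\bigr)-e_p^{-1}\!\bigl(y(\phi_2'(x))^2\bigr)\Bigr]dx+\int_{x_1}^{x_2}\!\Bigl[\pi-e_p^{-1}\!\bigl(y(\phi_2'(x))^2\bigr)\Bigr]dx.
$$
On $[0,x_1]\subset[0,x_0]$, monotonicity of $e_p^{-1}$ together with $\phi_1'\geq\phi_2'$ makes the first integrand nonnegative. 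On $(x_1,x_2)$, strict monotonicity of $\phi_2'$ and the definition $\phi_2'(x_2)=T$ give $y(\phi_2'(x))^2<e_p(\pi)$, so the second integrand is pointwise strictly positive, and since $x_1<x_2$ this second integral is strictly positive, which yields the desired strict inequality.

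The only delicate issue is bookkeeping: making sure that the defining sets for $\Psi^{p}_{\phi_i}(\sqrt{y})$ coincide with $[0,x_i]$ exactly, which relies on the strict monotonicity of $\phi_i'$ (and hence uses strict convexity in an essential way), and checking that the hypotheses place $T$ genuinely inside the range of both $\phi_1'$ and $\phi_2'$ on $[0,x_0]$. The first-zero assumption on $x_0$ is what upgrades the inequality $x_1\leq x_2$ to $x_1<x_2$; without it, the two thresholds could collapse and the argument would only give $\Psi_{\phi_1}^{p}(\sqrt{y})\geq\Psi_{\phi_2}^{p}(\sqrt{y})$.
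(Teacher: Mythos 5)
Your proof is correct and takes a genuinely different route from the paper's. The paper quotes an integral representation of $\Psi^p_{\phi_i}(\sqrt y)$ from \cite{bianchi2018spectral} that slices the 2D measure along the $\theta$-direction, writing $\Psi^p_{\phi_i}(\sqrt y) = \pi - \int_{S_i(y)}(\phi_i')^{-1}\bigl(\sqrt{e_p(\theta)/y}\bigr)\,d\theta - \mu_1(A_i)$; it then works by contradiction, showing $S_1=S_2$, $A_1=A_2$, deducing from the assumed inequality that $(\phi_1')^{-1}$ and $(\phi_2')^{-1}$ agree on a whole range of values, and finally using the intermediate-value theorem to produce a zero of $\phi_1'-\phi_2'$ in $(0,x_0)$, contradicting minimality of $x_0$. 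You instead slice along the $x$-direction and invert $e_p$ rather than $\phi_i'$, obtaining the dual formula $\Psi^p_{\phi_i}(\sqrt y) = \int_0^{x_i} e_p^{-1}\bigl(y(\phi_i'(x))^2\bigr)\,dx + \pi(1-x_i)$ with $x_i=(\phi_i')^{-1}(T)$, $T=\sqrt{e_p(\pi)/y}$. This is self-contained (it does not invoke the cited formula) and yields a direct positivity argument: your decomposition makes the source of the \emph{strict} inequality completely explicit, namely the positive contribution of the gap integral over $(x_1,x_2)$, whereas in the paper strictness emerges somewhat indirectly from the IVT step. Both approaches use the same key facts (strict monotonicity of $\phi_i'$ from strict convexity, monotonicity of $e_p$, the first-zero hypothesis to get $x_1<x_2$), and both correctly note that the hypotheses on $y$ put $T$ strictly between $\phi_1'(0)=\phi_2'(0)$ and $\phi_1'(x_0)=\phi_2'(x_0)$, so the thresholds $x_1,x_2$ lie in $(0,x_0)$.
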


\begin{proof}
First, we observe that
$$
Rg(\omega^p_{\phi_1})=Rg(\omega^p_{\phi_2})=\left[0,\frac{e_p(\pi)}{(\phi_1^{'}(0))^2}\right],
$$
which implies that the restrictions of $\Psi_{\phi_1}^{p}$ and $\Psi_{\phi_2}^{p}$ on the interval $\left(\frac{\sqrt{e_p(\pi)}}{\phi_1'(x_0)}, \frac{\sqrt{e_p(\pi)}}{\phi_1'(0)}\right)$ are well-defined. On the other hand, Rolle’s theorem ensures the existence of the zero $x_0$ in the open interval $(0,1)$ since $\phi_1(0) - \phi_2(0) = \phi_1(1) - \phi_2(1)$.

Now, let us assume that there exists $y\in\left(\frac{e_p(\pi)}{(\phi_1'(x_0))^2}, \frac{e_p(\pi)}{(\phi_1'(0))^2}\right)$ such that\\ $\Psi_{\phi_1}^{p}(\sqrt{y})\leq \Psi_{\phi_2}^{p}(\sqrt{y})$. From \cite{bianchi2018spectral} (see pages $20$ and $21$), we have 
\begin{equation}\label{eq:therem-dist1-eq1}  
\begin{split}
\Psi_{\phi_1}^{p}(\sqrt{y})&=\pi-\displaystyle\int_{S_1(y)} \left(\phi_1^{'}\right)^{-1}\left(\sqrt{\dfrac{e_p(\theta)}{y}} \right)\;d\theta -\mu_{1}(A_1)
\\
\Psi_{\phi_2}^{p}(\sqrt{y})&=\pi-\displaystyle\int_{S_2(y)}\left(\phi_2^{'}\right)^{-1}\left(\sqrt{\dfrac{e_p(\theta)}{y}} \right)\;d\theta -\mu_{1}(A_2),
\end{split}
\end{equation}
where
$$
S_i(y)=\left\{ \theta\in[0,\pi]:\quad  \phi_i'(0)\leq \sqrt{\dfrac{e_p(\theta)}{y}} \leq\phi_i'(1)    \right\},
$$
and
$$
A_i(y)=\left\{ \theta\in[0,\pi]:\;    \sqrt{\dfrac{e_p(\theta)}{y}}> \phi_i'(1)   \right\}\cup\left\{ \theta\in[0,\pi]:\;    \sqrt{\dfrac{e_p(\theta)}{y}}< \phi_i'(0)   \right\},\;i \in \{1,2\}.
$$
We  claim that
\begin{equation}\label{eq:therem-dist1-eq2}  
S_1(y) = S_2(y) = \left[    e_p^{-1}\left( y\left( \phi_1^{'}(0)   \right)^2 \right),\pi   \right],
\end{equation}
and
\begin{equation}\label{eq:onA}
    A_1=A_2=\left\{ \theta\in[0,\pi]:\;    \sqrt{\dfrac{e_p(\theta)}{y}}< \phi_1'(0)   \right\}.
\end{equation}

Indeed, considering that $y\in \left(\frac{e_p(\pi)}{(\phi_1^{'}(x_0))^2},\frac{e_p(\pi)}{(\phi_1^{'}(0))^2}\right)$ together with $\phi_1'(0)=\phi_2'(0)$ and $\phi_1'(x_0)=\phi_2'(x_0)$, we obtain

$$\phi_i^{'}(0)\leq \sqrt{\frac{e_p(\pi)}{y}}\leq \phi_i^{'}(x_0),\quad i=1,2.$$
Then, using the fact that $e_p(\theta)\leq e_p(\pi)$ for all $\theta\in[0,\pi]$ (see Corollary \ref{eppropreties}), we obtain
$$\sqrt{\frac{e_p(\theta)}{y}}\leq \phi_i^{'}(x_0)\leq \phi_i^{'}(1),\quad \forall \theta\in[0,\pi],\quad i=1,2.$$
We conclude 
$$
S_1(y) = S_2(y) = \left\{ \theta\in[0,\pi]:\quad  \phi_1^{'}(0)\leq \sqrt{\dfrac{e_p(\theta)}{y}} \leq\phi_1^{'}(x_0) \right\},
$$
and 
$$
A_1=A_2=\left\{ \theta\in[0,\pi]:\;    \sqrt{\dfrac{e_p(\theta)}{y}}< \phi_1'(0)   \right\}.
$$
Moreover, since $y(\phi_1'(x_0))^2 > e_p(\pi)$, it is straightforward to see that
\begin{align*}
S_1(y) & = \left\{ \theta\in [0,\pi]: \quad  \phi_1'(0) \leq \sqrt{\dfrac{e_p(\theta)}{y}} \leq\phi_1'(x_0) \right\}\\
& = \left\{ \theta\in [0,\pi]: \quad y\left( \phi_1'(0)   \right)^2 \leq e_p(\theta) \leq y\left( \phi_1'(x_0)   \right)^2 \right\}\\
& = \left[    e_p^{-1}\left( y\left( \phi_1^{'}(0)   \right)^2 \right),\pi   \right],
\end{align*} 
which prove the claim \eqref{eq:therem-dist1-eq2}-\eqref{eq:onA}. On the other hand, the assumption $\Psi_{\phi_1}^{p}(\sqrt{y})\leq \Psi_{\phi_2}^{p}(\sqrt{y})$ implies 
\begin{equation}\label{eq:dif}  \displaystyle\int_{S_1(y)}\left(\phi_2^{'}\right)^{-1}\left(\sqrt{\dfrac{e_p(\theta)}{y}} \right)-\left(\phi_1^{'}\right)^{-1}\left(\sqrt{\dfrac{e_p(\theta)}{y}} \right)\;d\theta\leq 0. 
\end{equation}
Since $0 < y(\phi_1'(0))^2 < e_p(\pi)$, we have $\mu_1\left(S_1(y)\right)>0$. Then, using fact that $\phi_1'(x) \geq \phi_2'(x)$ for all $x \in [0, x_0]$, the above inequality \eqref{eq:dif} gives
\begin{equation}\label{eq:therem-dist1-eq3}
\left(\phi_1'\right)^{-1}\left(\sqrt{\dfrac{e_p(\theta)}{y}} \right) =\left(\phi_2'\right)^{-1}\left(\sqrt{\dfrac{e_p(\theta)}{y}} \right), \quad \forall \theta \in S_1(y).
\end{equation}

To end the proof, it is enough to demonstrate that \eqref{eq:therem-dist1-eq3} contradicts the fact that $x_0 \in (0,1)$ is the first zero of the function $\phi_1'-\phi_2'$. In fact, by the intermediate-value theorem and  \eqref{eq:therem-dist1-eq2}, one can find $x_1 \in (0,x_0)$ and $\theta_0 \in S_1(y)$ such that
$$
\phi_1'(x_1) = \sqrt{\frac{e_p(\theta_0)}{y}},
$$
and, using \eqref{eq:therem-dist1-eq3}, we obtain 
$$
x_1=\left(\phi_1'\right)^{-1}\left( \sqrt{\frac{e_p(\theta_0)}{y}}\right) = \left(\phi_2'\right)^{-1}\left( \sqrt{\frac{e_p(\theta_0)}{y}}\right) = \left(\phi_2'\right)^{-1}\left(\phi_1'(x_1)\right).
$$
Consequently, $\phi_2'(x_1) = \phi_1'(x_1)$, signifying that $x_1$ is a zero for the function $\phi_1'-\phi_2'$. This concludes the proof as $x_1<x_0$.
\end{proof}

 Similarly, the next theorem addresses strictly concave reparametrizations. The proof follows a similar approach to that of Theorem \ref{dis1} and is thus omitted.

\begin{theorem}\label{dis2}
Let $\phi_1, \phi_2 \in \mathbf{C}_{[0,1]}$ that are strictly concave, and  $\phi_1'(1) = \phi_2'(1)$. Let $x_0 \in (0, 1)$ to be the last zero of the function $\phi_1' - \phi_2'$. Then, if 
$$\phi_1'(x) \geq \phi_2'(x),\quad \forall x \in [x_0, 1],$$
we have, 
$$ 
\Psi_{\phi_1}^{p}(\sqrt{y}) > \Psi_{\phi_2}^{p}(\sqrt{y}), \quad \forall y \in \left(\frac{e_p(\pi)}{(\phi_1'(x_0))^2}, \frac{e_p(\pi)}{(\phi_1'(1))^2}\right).
$$
\end{theorem}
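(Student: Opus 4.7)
The plan is to adapt the argument of Theorem~\ref{dis1} to the strictly concave setting, with careful attention to the sign changes induced by $\phi_i'$ now being strictly decreasing. The overall strategy remains the same: derive an explicit integral representation of $\Psi_{\phi_i}^{p}(\sqrt{y})$, show that the cut-off sets appearing in this representation coincide for $i=1,2$ on the prescribed $y$-interval, and then leverage the monotonicity hypothesis on $[x_0,1]$ to force the desired strict ordering via a contradiction argument. The interior zero $x_0\in(0,1)$ of $\phi_1'-\phi_2'$ is guaranteed by Rolle's theorem applied to $\phi_1-\phi_2$ on $[0,1]$, and the symmetric choice $\phi_1'(1)=\phi_2'(1)$ now aligns the upper rather than the lower endpoints of the ranges $Rg(\omega_{\phi_i}^{p})=\left[0,\;e_p(\pi)/(\phi_i'(1))^2\right]$.

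The first step is to derive the concave analog of the representation used in the proof of Theorem~\ref{dis1}. By Fubini and the equivalence $\omega_{\phi_i}^{p}(x,\theta)\leq y \iff \phi_i'(x)\geq \sqrt{e_p(\theta)/y}$, together with the fact that $\phi_i'$ is strictly decreasing, the inner measure in $x$ equals $1$ when $\sqrt{e_p(\theta)/y}<\phi_i'(1)$, equals $(\phi_i')^{-1}\!\left(\sqrt{e_p(\theta)/y}\right)$ when $\sqrt{e_p(\theta)/y}\in[\phi_i'(1),\phi_i'(0)]$, and equals $0$ above. This yields
$$
\Psi_{\phi_i}^{p}(\sqrt{y}) \;=\; \mu_{1}(B_-) \;+\; \int_{S_i(y)} (\phi_i')^{-1}\!\left(\sqrt{e_p(\theta)/y}\right)\,d\theta,
$$
with $B_-:=\{\theta\in[0,\pi]:\sqrt{e_p(\theta)/y}<\phi_i'(1)\}$ and $S_i(y):=\{\theta\in[0,\pi]:\phi_i'(1)\leq\sqrt{e_p(\theta)/y}\leq\phi_i'(0)\}$. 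This is where the concave case departs from the convex one: the integral enters with a positive sign, and the ``flat'' contribution is attached to the endpoint $x=0$ rather than $x=1$.

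Next, paralleling Theorem~\ref{dis1}, I verify that for every $y$ in the target interval both $B_-$ and $S_i(y)$ agree for $i=1,2$. The set $B_-$ is identical for the two indices because it depends only on the common value $\phi_1'(1)=\phi_2'(1)$; and the choice $y<e_p(\pi)/(\phi_1'(x_0))^2$ forces $\sqrt{e_p(\theta)/y}\leq\sqrt{e_p(\pi)/y}<\phi_1'(x_0)=\phi_2'(x_0)\leq\phi_i'(0)$, so the upper constraint in $S_i(y)$ is inactive and $S_1(y)=S_2(y)=\left[e_p^{-1}\!\left(y(\phi_1'(1))^2\right),\pi\right]=:S$. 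Substituting into the representation produces the clean identity
$$
\Psi_{\phi_1}^{p}(\sqrt{y})-\Psi_{\phi_2}^{p}(\sqrt{y}) \;=\; \int_{S}\left[(\phi_1')^{-1}-(\phi_2')^{-1}\right]\!\!\left(\sqrt{e_p(\theta)/y}\right)d\theta.
$$

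Finally, the monotonicity hypothesis $\phi_1'\geq\phi_2'$ on $[x_0,1]$ is converted into a pointwise bound on the inverses. For $v=\sqrt{e_p(\theta)/y}$ with $\theta\in S$, both $(\phi_i')^{-1}(v)$ lie in $(x_0,1]$; setting $x=(\phi_1')^{-1}(v)$ gives $\phi_2'(x)\leq\phi_1'(x)=v$, and since $\phi_2'$ is decreasing it reaches the value $v$ at some $x'\leq x$, yielding $(\phi_2')^{-1}(v)\leq(\phi_1')^{-1}(v)$. Hence the integrand above is nonnegative, and the desired strict inequality follows by the same contradiction as in Theorem~\ref{dis1}: assuming the opposite, the integral forces equality almost everywhere on $S$, hence by continuity $\phi_1'\equiv\phi_2'$ on a nondegenerate subinterval of $(x_0,1)$, contradicting the definition of $x_0$ as the last zero of $\phi_1'-\phi_2'$ in $(0,1)$. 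The main obstacle is the first step: the representation formula changes both sign and ``flat'' endpoint in the concave setting relative to the convex one, and one must be careful that $(\phi_i')^{-1}$ is itself decreasing when applying the monotonicity assumption; once this is set up correctly, the remaining steps parallel Theorem~\ref{dis1} almost verbatim.
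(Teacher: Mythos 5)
Your proof is correct and follows essentially the same route the paper intends for Theorem~\ref{dis2} (a mirror image of the proof of Theorem~\ref{dis1}), with the sign of the integral term and the ``flat'' endpoint correctly flipped to account for $\phi_i'$ being strictly decreasing, the cut-off sets correctly shown to coincide on the stated $y$-interval, and the final contradiction extracted from the definition of $x_0$ as the last zero. One minor slip: you write ``the choice $y<e_p(\pi)/(\phi_1'(x_0))^2$'' whereas the hypothesis gives $y>e_p(\pi)/(\phi_1'(x_0))^2$; the chain of inequalities you then produce uses the correct direction, so this is merely a typo and does not affect the argument.
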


A family of examples of such convex reparametrization functions is 
\begin{equation}\label{eq:reparametrization-convex-example}
\phi_{a,b}=e^{ax+b}-e^b+(\gamma-ae^b)x,
\end{equation}
where $a >0$, $0<\gamma<1$ and $b$ is given by 
$$
b=-\ln\left(\frac{e^a-(a+1)}{1-\gamma}\right).
$$
Notably, these reparametrization functions satisfy the conditions specified in Theorem \ref{dis1}, including the property $\phi_{a,b}'(0) = \gamma$.  

In the case of strictly concave reparametrization, the family \eqref{eq:reparametrization-convex-example} is replaced by
\begin{equation}\label{eq:reparametrization-concave-example}
\phi_{a,b}=\ln(ax+b)-\ln(b)+\left(\gamma-\frac{a}{a+b}\right)x.
\end{equation}
Here, the parameters are similar: $a > 0$, $0 < \gamma < 1$. However, the value of $b$ is calculated as $b:=a/x^*$, where $x^* \in (0,1)$ represents the unique solution to the equation
$$
\gamma=1-\left(\ln(x^*+1)-\frac{x^*}{x^*+1}\right).
$$ 
Once again, these reparametrization functions satisfy the hypotheses of Theorem \ref{dis2}, and they specifically verify $\phi_{a,b}'(1)=\gamma$.

At this stage, we have demonstrated the feasibility of ordering the families of eigenfrequencies based on the corresponding reparametrizations' ordering. In the following result, our focus will be on the distribution of packed eigenfrequencies and how the convexity of the symbol influences the behavior of these eigenfrequencies. More precisely, we will show that the number of eigenfrequencies can indeed be ordered, and this order depends on the convexity of the symbol. In the case of a strictly convex symbol, the frequency spectrum shifts to the right, whereas a strictly concave symbol results in a leftward shift.

\begin{theorem}[Distribution of pack-eigenvalues]\label{cl2.5}
Let $p \geq 1$ and $\phi \in \mathbf{C}_{[0,1]}$. Suppose there exists a closed interval $I$ in which $\Psi^p_{\phi} $ is of class $\mathcal{C}^2(I)$. Let $\{y_0, y_1, \ldots, y_r\}$ be a uniform discretization of $I$ and define $E_I^{i}$ by 
$$E_I^{i}=\left|\left\{ k=1,\dots,N: y_{i-1}< \sqrt{n^{-2}  \lambda_{k,h}^{\phi}}\leq y_{i}     \right\}\right|,\quad i=1,\dots r-1.$$
Then, for sufficiently large $n$, the following hold
\begin{itemize}
\item[1.] If $\Psi^p_{\phi}$ is strictly convex on $I$, then for all $i \in \{1, 2, \ldots, r-1\}:$
\begin{equation*}
E_I^{i}<E_I^{i+1}.
\end{equation*}

\item[2.] If $\Psi^p_{\phi}$ is strictly concave on $I$, then for all $i \in \{1, 2, \ldots, r-1\}:$
\begin{equation*}
E_I^{i}> E_I^{i+1}.
\end{equation*}
\end{itemize}
\end{theorem}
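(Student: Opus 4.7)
The plan is to count the eigenfrequencies in each subinterval via the uniform discrete Weyl's law (Theorem \ref{gedis}) and then analyze the resulting second-order difference of $\Psi^{p}_{\phi}$ using its $C^{2}$ regularity on the compact set $I$. Introducing the empirical counting function
\[
G_{n}^{p}(y) \;=\; \frac{\bigl|\{k : \sqrt{n^{-2}\lambda_{k,h}^{\phi}} \leq y\}\bigr|}{N+1},
\]
one rewrites $E_{I}^{i} = (N+1)\bigl(G_{n}^{p}(y_{i}) - G_{n}^{p}(y_{i-1})\bigr)$, so that the key quantity is the discrete second-order difference
\[
E_{I}^{i+1} - E_{I}^{i} \;=\; (N+1)\bigl(G_{n}^{p}(y_{i+1}) - 2G_{n}^{p}(y_{i}) + G_{n}^{p}(y_{i-1})\bigr).
\]
By Theorem \ref{gedis}, there exists a sequence $\epsilon_{n} \to 0$ such that $\sup_{y}\bigl|G_{n}^{p}(y) - \tfrac{1}{\pi}\Psi^{p}_{\phi}(y)\bigr| \leq \epsilon_{n}$ over $Rg(\sqrt{\omega^{p}_{\phi}})$, which we may assume contains $I$. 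Applying this to each of the three sample points yields
\[
E_{I}^{i+1} - E_{I}^{i} \;\geq\; \frac{N+1}{\pi}\,\Delta^{2}\Psi^{p}_{\phi}(y_{i}) \;-\; 4(N+1)\epsilon_{n},
\]
where $\Delta^{2}\Psi^{p}_{\phi}(y_{i}) := \Psi^{p}_{\phi}(y_{i+1}) - 2\Psi^{p}_{\phi}(y_{i}) + \Psi^{p}_{\phi}(y_{i-1})$.

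Next I would exploit the strict convexity: since $\Psi^{p}_{\phi} \in C^{2}(I)$ and $I$ is compact, continuity of $(\Psi^{p}_{\phi})''$ provides a constant $m>0$ with $(\Psi^{p}_{\phi})''(y) \geq m$ on $I$. Taylor's theorem with Lagrange remainder, expanded about $y_{i}$ with the fixed spacing $h := (y_{r}-y_{0})/r$, gives
\[
\Delta^{2}\Psi^{p}_{\phi}(y_{i}) \;=\; \tfrac{h^{2}}{2}\bigl((\Psi^{p}_{\phi})''(\xi_{i}^{+}) + (\Psi^{p}_{\phi})''(\xi_{i}^{-})\bigr) \;\geq\; h^{2}m,
\]
for some $\xi_{i}^{\pm} \in (y_{i-1},y_{i+1})$. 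Combining with the preceding estimate,
\[
E_{I}^{i+1} - E_{I}^{i} \;\geq\; (N+1)\biggl(\frac{h^{2}m}{\pi} - 4\epsilon_{n}\biggr),
\]
which is strictly positive (indeed, diverges to $+\infty$) as soon as $n$ is large enough that $\epsilon_{n} < h^{2}m/(4\pi)$. Since there are only finitely many indices $i \in \{1,\dots,r-1\}$ and the bound is uniform in $i$, a single threshold $n_{0}$ works for all of them simultaneously. The strictly concave case is symmetric: one obtains $(\Psi^{p}_{\phi})''(y) \leq -m < 0$ on $I$, hence $\Delta^{2}\Psi^{p}_{\phi}(y_{i}) \leq -h^{2}m$, and the same dominated-error argument forces $E_{I}^{i+1} - E_{I}^{i} < 0$ for $n$ large.

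The only real obstacle is bookkeeping: one must verify that the uniform Weyl error $\epsilon_{n}$ is eventually dominated by the fixed positive quantity $h^{2}m/\pi$. This is immediate because $h$, $m$, and $r$ depend only on $I$ and $\phi$, not on $n$, while $\epsilon_{n} \to 0$ by Theorem \ref{gedis}. Aside from this, the proof requires nothing beyond combining the uniform Weyl's law with a single pointwise Taylor expansion of $\Psi^{p}_{\phi}$.
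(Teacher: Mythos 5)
Your proof is correct, but it takes a noticeably different (and arguably cleaner) route than the paper's. The paper does not invoke the uniform discrete Weyl's law (Theorem~\ref{gedis}), which only appears later in Section~\ref{improvedestimation}; instead it applies the pointwise discrete Weyl's law (Theorem~\ref{th1}) at each of the finitely many sample points $y_0,\dots,y_r$, which is sufficient precisely because there are only finitely many of them. Your invocation of the uniform version is not circular (its proof does not rely on Theorem~\ref{cl2.5}), but it is structurally awkward and more than is needed. The more interesting difference is in how the convexity is exploited: the paper applies the mean value theorem once to write $\Psi^p_{\phi}(y_i)-\Psi^p_{\phi}(y_{i-1}) = \tfrac{y_r-y_0}{r}(\Psi^p_{\phi})'(x_i)$ with $x_i\in(y_{i-1},y_i)$, and then applies MVT a second time to $(\Psi^p_{\phi})'$ to bound $(\Psi^p_{\phi})'(x_i)-(\Psi^p_{\phi})'(x_{i+1}) \leq -\inf_I(\Psi^p_{\phi})''\,(x_{i+1}-x_i)$, which forces the introduction of the auxiliary positive constant $\delta=\min_i(x_{i+1}-x_i)$ into the choice of $\epsilon$. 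Your single second-order Taylor expansion about $y_i$, yielding directly $\Delta^2\Psi^p_{\phi}(y_i)\geq h^2 m$ with $m=\inf_I(\Psi^p_{\phi})''$, avoids the intermediate points $x_i$ and the $\delta$-bookkeeping entirely. Both arguments implicitly read ``strictly convex on $I$'' as ``$(\Psi^p_{\phi})''>0$ on $I$'' (a $C^2$ strictly convex function can have vanishing second derivative at isolated points, as $x\mapsto x^4$ shows), but since this imprecision is shared with the paper it is not a defect of your approach. Overall your argument buys a shorter, more transparent estimate at the modest cost of invoking a stronger tool than necessary.
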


\begin{proof}
We will focus on proving assertion 1, as the proof of assertion 2 follows a similar argument. For all $i \in \{1, \cdots, r\}$, we apply Discrete Weyl's law \ref{th1} to obtain
\begin{equation} \label{eq:propo-distribution-pack-eigenvalues-1}
\lim_{n\rightarrow  +\infty}\dfrac{E_I^{i}}{n}=\frac{1}{\pi}  \Psi^p_{\phi}(y_{i})-\frac{1}{\pi}  \Psi^p_{\phi}(y_{i-1}),
\end{equation}
and by the mean value theorem, there exist $x_i \in (y_{i-1}, y_i)$ such that
\begin{equation} \label{eq:propo-distribution-pack-eigenvalues-2}
\begin{split}
\frac{1}{\pi}  \Psi^p_{\phi}(y_{i})-\frac{1}{\pi}  \Psi^p_{\phi}(y_{i-1}) &= \frac{1}{\pi}(y_{i}-y_{i-1}) \left(  \Psi^p_{\phi}\right)'(x_i)\\
&= \frac{y_r-y_0}{r\pi}\left(  \Psi^p_{\phi}\right)'(x_i).
\end{split}
\end{equation}
Now, for $i \in \{1, \cdots, r-1\}$, let us fix $\epsilon$ such that
$$
0 < \epsilon < \frac{(y_r-y_0) \delta}{2r\pi} \inf_{I} \left(  \Psi^p_{\phi}   \right)'', \quad \text{with } \delta:= \inf_{1 \leq i \leq r-1} (x_{i+1} -x_{i} ). 
$$
Using equations \eqref{eq:propo-distribution-pack-eigenvalues-1} and \eqref{eq:propo-distribution-pack-eigenvalues-2}, we deduce that for sufficiently large $n$:
\begin{align*}
& \dfrac{1}{n}\left(E^i_{I} -E^{i+1}_I       \right) \\
&\leq 2\epsilon +\frac{1}{\pi}\left[    \Psi^p_{\phi}(y_{i})-  \Psi^p_{\phi}(y_{i-1})  \right] - \frac{1}{\pi} \left[   \Psi^p_{\phi}(y_{i+1})-  \Psi^p_{\phi}(y_{i})\right]\\
&=2\epsilon+ \frac{y_r-y_0}{r\pi}\left(   \left(  \Psi^p_{\phi}\right)^{'}(x_i) -\left(  \Psi^p_{\phi}\right)^{'}(x_{i+1})         \right)\\
&\leq 2\epsilon -\frac{y_r-y_0}{r\pi}\inf_{I} \left(  \Psi^p_{\phi}\right)'' \;(x_{i+1}-x_i)\\
&\leq 2\epsilon -\frac{(y_r-y_0) \delta}{r\pi} \inf_{I} \left(  \Psi^p_{\phi}\right)'' <0.
\end{align*}
This concludes the proof.
\end{proof}
We end this section with the following remark
\begin{remark}
Proposition \ref{propc} provides all the necessary conditions on the function $\Psi^p_{\phi}$ in the case $p=1$ to apply the above theorem. Although the results of the theorem are general, a study of the symbol's regularity and convexity is needed to understand the shifting of the eigenfrequency spectrum in the general case $p \geq 1$ and within the entire range of $\sqrt{\omega^p_{\phi}}$.
\end{remark}

\section{Improved eigenfrequency estimations }\label{improvedestimation}
In this section, we leverage Proposition \ref{stefa} and the linear behavior of the symbol near zero to improve convergence in both the discrete Weyl's law \eqref{discretweylslaw} and the eigenfrequency estimation \eqref{approximationwithoutout}. Specifically, we use the properties of $\Psi^p_{\phi}$ from Proposition \ref{stefa} to establish that the simple convergence in \eqref{conversdiscretwels} is, in fact, uniform. Additionally, we build on Theorem \ref{symbolnear0} to derive new estimates for the eigenfrequencies.

The first result of this section reflects that the normalized count of eigenfrequencies less than a given value $y$ in $Rg\left(\sqrt{\omega_\phi^p}\right)$ converges uniformly to $\frac{1}{\pi}\Psi^p_{\phi}(y)$.
\begin{theorem}[Uniform Discrete Weyl’s law]\label{gedis}
Let $\phi\in\mathbf{C}_{[0,1]}$, such that \\$\sqrt{n^{-2}L_{\phi,n}^p}\sim_{\lambda} \sqrt{ \xi^p_{\phi}}$. Then the sequence of  functions 
$$
G_n^p(y)= \dfrac{\left| \left\{k=1,\dots, N:\quad \sqrt{n^{-2}\lambda_{k,h}^{\phi}}\leq y\right\}\right|}{N+1},\quad \forall y\in Rg\left(\sqrt{\omega_{\phi}^p}\right),
$$
converges uniformly, as $n \rightarrow +\infty$, to $\frac{1}{\pi}\Psi^p_{\phi}$.
\end{theorem}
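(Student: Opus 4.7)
The plan is to reduce the claimed uniform convergence to the pointwise version already supplied by Theorem \ref{th1}, via the classical Pólya-Dini monotonicity principle: a sequence of non-decreasing functions that converges pointwise to a continuous non-decreasing limit on a compact interval must converge uniformly. The two essential ingredients are (i) the continuity of $\Psi_\phi^p$, which comes from Proposition \ref{stefa}, and (ii) the intrinsic monotonicity of $G_n^p$ as a counting ratio.

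First, I would observe that $Rg\bigl(\sqrt{\omega_\phi^p}\bigr)$ is a compact interval $[0,M_\phi]$, since $\sqrt{\omega_\phi^p}$ is continuous on the compact connected set $[0,1]\times[0,\pi]$. By Proposition \ref{stefa}, $\Psi_\phi^p \in \mathcal{C}^1((0,+\infty))$, and $\Psi_\phi^p(0)=0$ by definition \eqref{largpsidif}, so $\Psi_\phi^p$ is continuous on all of $[0,M_\phi]$. Applying Theorem \ref{th1} to the sequence $\sqrt{n^{-2}L_{\phi,n}^p}\sim_\lambda \sqrt{\xi_\phi^p}$ at every $y\in[0,M_\phi]$ (each being a continuity point of $\Psi_\phi^p$) yields the pointwise convergence $G_n^p(y)\to \frac{1}{\pi}\Psi_\phi^p(y)$, since the factor $N/(N+1)\to 1$ is absorbed harmlessly.

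For the uniform upgrade, fix $\varepsilon>0$ and, exploiting uniform continuity of $\Psi_\phi^p$ on $[0,M_\phi]$, choose a finite partition $0=y_0<y_1<\cdots<y_M=M_\phi$ with $\frac{1}{\pi}\bigl(\Psi_\phi^p(y_{i+1})-\Psi_\phi^p(y_i)\bigr)<\varepsilon$ for each $i$. Pointwise convergence at these finitely many nodes supplies some $n_0$ such that $\bigl|G_n^p(y_i)-\frac{1}{\pi}\Psi_\phi^p(y_i)\bigr|<\varepsilon$ for all $i$ and all $n\geq n_0$. For arbitrary $y\in[y_i,y_{i+1}]$, the monotonicity of both $G_n^p$ and $\Psi_\phi^p$ in $y$ yields the sandwich
\[
G_n^p(y_i) - \tfrac{1}{\pi}\Psi_\phi^p(y_{i+1}) \;\leq\; G_n^p(y) - \tfrac{1}{\pi}\Psi_\phi^p(y) \;\leq\; G_n^p(y_{i+1}) - \tfrac{1}{\pi}\Psi_\phi^p(y_i),
\]
and each outer quantity is bounded in absolute value by $2\varepsilon$, establishing uniform convergence.

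The argument is essentially routine once Proposition \ref{stefa} and Theorem \ref{th1} are invoked, so I do not anticipate a serious obstacle. The only minor bookkeeping items are that the normalization by $N+1$ instead of $N$ is innocuous (immediate from $N\to\infty$), and that the right endpoint behaves well: since outliers number only $OUT(p,n)=o(N)$ by Corollary \ref{numberofoutlierscor}, one has $G_n^p(M_\phi)=\frac{N-OUT(p,n)}{N+1}\to 1 = \frac{1}{\pi}\Psi_\phi^p(M_\phi)$, confirming the limit at the endpoint.
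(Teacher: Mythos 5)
Your argument is correct and it is a genuinely different route from the paper's. The paper proceeds by a hands-on decomposition: it partitions $Rg\bigl(\sqrt{\omega_\phi^p}\bigr)$ into the subintervals determined by the sorted non-outlier eigenfrequencies, splits the sup into three terms $T_1,T_2,T_3$ (left tail, interior, right tail), and controls the interior by combining Lemma~\ref{approximationwithoutout} (the uniform sup-error bound $\max_k|\sqrt{n^{-2}\lambda_{k,h}^\phi}-(\Psi^p_\phi)^{-1}(k\pi/(N+1))|\to 0$) with the mean value theorem, the global Lipschitz bound $\sup_y(\Psi^p_\phi)'<\infty$ from Proposition~\ref{stefa}, and the uniform continuity of $\sqrt{\xi^p_\phi}$. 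You instead invoke only the pointwise discrete Weyl law (Theorem~\ref{th1}) together with the structural facts that $G_n^p$ is a monotone counting ratio and that $\Psi^p_\phi$ is continuous on the compact interval $[0,M_\phi]$, and then upgrade pointwise to uniform convergence by the classical Pólya argument (uniform continuity of the limit gives a fine finite grid; monotonicity gives the sandwich). Your proof is shorter and requires strictly less machinery — in particular, it does not need Lemma~\ref{approximationwithoutout}, nor the $C^1$ regularity and Lipschitz estimate of $\Psi^p_\phi$, only its continuity. The paper's approach is more quantitative in spirit (it tracks exactly which terms contribute to the error), which dovetails with the explicit eigenfrequency estimates developed later in Section~\ref{improvedestimation}, whereas yours is a cleaner black-box reduction. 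One small terminological note: the principle you use is usually attributed to Pólya alone (uniform convergence of monotone distribution functions to a continuous limit); Dini's theorem is the dual statement about monotone sequences of continuous functions. The attribution does not affect the validity of your proof. Also, your endpoint remark is correct but actually unnecessary: once you know $\Psi^p_\phi$ is continuous on all of $[0,M_\phi]$, the pointwise convergence from Theorem~\ref{th1} already applies at $y=M_\phi$, so no separate appeal to Corollary~\ref{numberofoutlierscor} is needed.
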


\begin{proof}
Let $n \in \mathbb{N}^*$. We observe that the range of the function $\sqrt{\omega_\phi^p}$ is decomposed as 
\begin{equation*}
\begin{split}
    Rg\left(\sqrt{\omega_\phi^p}\right)&=\left[0,\sqrt{n^{-2}\lambda_{1,h}^{\phi}}\right)\cup\left(\bigcup_{k=1}^{N-OUT(p,n)-1}\left[\sqrt{n^{-2}\lambda_{k,h}^{\phi}},\sqrt{n^{-2}\lambda_{k+1,h}^{\phi}}\right)\right)\\&\hspace{3cm}\cup\left[\sqrt{n^{-2}\lambda_{N-OUT(p,n),h}^{\phi}},\max Rg\left(\sqrt{\omega_\phi^p}\right)\right].      
\end{split}
\end{equation*}
Hence,
\begin{equation}\label{eq:gedis-1}
\max_{y\in Rg\left(\sqrt{\omega_\phi^p}\right)}\left| G_n^p(y) - \frac{1}{\pi}\Psi^p_{\phi}(y)    \right|=\max \left \{T_1,T_2,T_3 \right\},
\end{equation}
where 

\begin{align*}
& T_1=\max\left\{\frac{1}{\pi}\Psi^p_{\phi}(y): \quad 0\leq y<  \sqrt{n^{-2}\lambda_{1,h}^{\phi} } \right\},\\
&{}\\
& T_2=\max \left\{ \left|\dfrac{k}{N+1}-  \frac{1}{\pi}\Psi^p_{\phi}(y)  \right|:  1 \leq k \leq N-OUT(p,n)-1 \text{ and } \sqrt{n^{-2}\lambda_{k,h}^{\phi}}\leq y < \sqrt{n^{-2}\lambda_{k+1,h}^{\phi}} \right\},\\
&{and}\\
& T_3=\max\left\{\left|\dfrac{N-OUT(p,n)}{N+1} -\frac{1}{\pi}\Psi^p_{\phi}(y)  \right| :\; \sqrt{n^{-2}\lambda_{N-OUT(p,n),h}^{\phi}}\leq y\leq \max Rg\left(\sqrt{\omega_\phi^p}\right)\right\}.
\end{align*}

Now, we will estimate the three terms $T_1$, $T_2$, and $T_3$. For $T_1$, using the monotonicity of the function $y \mapsto \Psi^p_{\phi}(y)$, we have:
\begin{equation}\label{eq:gedis-2}
T_1\leq \frac{1}{\pi}\Psi^p_{\phi} \left( \sqrt{n^{-2}\lambda_{1,h}^{\phi}} \right).
\end{equation}
Applying the mean value theorem, Proposition \ref{stefa} and the relation \eqref{relatioxiandPsi}, we obtain:
\begin{equation}\label{eq:gedis-3}
T_2\leq \frac{C}{\pi} \max_{1\leq k\leq  N-1-OUT(p,n)}\left(T_{2,k}\right),
\end{equation}
and
\begin{equation}\label{eq:gedis-4}
T_3\leq  \frac{C}{\pi} T_{3,k},
\end{equation}
where $$C:=\sup_{y\geq 0}\left(\Psi^p_{\phi}\right)'(y),$$
$$T_{2,k}=\left(\left|\sqrt{\xi_{\phi}^p}\left(\dfrac{k}{N+1}\right)- \sqrt{n^{-2}\lambda_{k,h}^{\phi}}   \right|+ \left|  \sqrt{ n^{-2}\lambda_{k+1,h}^{\phi}}-\sqrt{n^{-2}\lambda_{k,h}^{\phi}}\right|\right),$$
and 
\begin{equation*}
    \begin{split}
      T_{3,k}&=\left|\sqrt{\xi_{\phi}^p}\left(\dfrac{N-OUT(p,n)}{N+1}\right)- \sqrt{n^{-2}\lambda_{N-OUT(p,n),h}^{\phi}}   \right|\\&\hspace{5cm}+ \left|  \sqrt{ n^{-2}\lambda_{N-OUT(p,n),h}^{\phi}}-\max Rg\left(\sqrt{\omega_\phi^p}\right)\right|.  
    \end{split}
\end{equation*}
We now establish that all the terms $T_1$, $T_2$, and $T_3$ tend to zero as $n$ tends to infinity.

From \eqref{eq:lemma-ditribution-eig-out}, we have 
$$\lim_{n \rightarrow +\infty}\left|    \sqrt{n^{-2}\lambda_{1,h}^{\phi}}   -  \left(\Psi^{p}_{\phi}\right)^{-1}\left(\frac{\pi}{N+1}\right)\right|= 0.$$
Since the function $ \left(\Psi^{p}_{\phi}\right)^{-1}$ is continuous, then  $$\lim_{n \rightarrow +\infty} \left(\Psi^{p}_{\phi}\right)^{-1}\left(\frac{\pi}{N+1}\right)=\left(\Psi^{p}_{\phi}\right)^{-1}\left(0\right)=0. $$
Hence,
\begin{equation}\label{limitfirsfreq}
    \lim_{n \rightarrow +\infty} \sqrt{n^{-2}\lambda_{1,h}^{\phi}} =0.
\end{equation}
Injecting  the above limit \eqref{limitfirsfreq} in \eqref{eq:gedis-2} and using the fact that $\Psi^p_{\phi}(0)=0,
$
we get 
\begin{equation}\label{limitT1}
   \lim_{n \rightarrow +\infty}T_1=0 .
\end{equation}
For the term $T_2,$ 
we have 
\begin{align*}
    T_{2,k}&=\left|\sqrt{\xi_{\phi}^p}\left(\dfrac{k}{N+1}\right)- \sqrt{n^{-2}\lambda_{k,h}^{\phi}}   \right|+ \left|  \sqrt{ n^{-2}\lambda_{k+1,h}^{\phi}}-\sqrt{n^{-2}\lambda_{k,h}^{\phi}}\right|\\
    &\leq 2 \left|\sqrt{\xi_{\phi}^p}\left(\dfrac{k}{N+1}\right)- \sqrt{n^{-2}\lambda_{k,h}^{\phi}}   \right| + \left|\sqrt{\xi_{\phi}^p}\left(\dfrac{k+1}{N+1}\right)- \sqrt{n^{-2}\lambda_{k+1,h}^{\phi}}   \right|\\
    &\quad\quad\quad\quad\quad\quad+\left| \sqrt{\xi_{\phi}^p}\left(\dfrac{k+1}{N+1}\right)-\sqrt{\xi_{\phi}^p}\left(\dfrac{k}{N+1}\right) \right|.
\end{align*}
Then,
\begin{align}\label{divlt2}
    T_2\leq \frac{C}{\pi}\Bigg\{3 &\max_{k \in \mathcal{I}(p,n)}\left|\sqrt{\xi_{\phi}^p}\left(\dfrac{k}{N+1}\right)- \sqrt{n^{-2}\lambda_{k,h}^{\phi}}   \right| \nonumber \\
    &+ \max_{k=1,\cdots N-1-OUT(p,n)} \left| \sqrt{\xi_{\phi}^p}\left(\dfrac{k+1}{N+1}\right)-\sqrt{\xi_{\phi}^p}\left(\dfrac{k}{N+1}\right) \right|\Bigg\}.
\end{align}
On the other hand, since the function $\sqrt{\xi_{\phi}^p}$ is continuous over  $[0,1]$, it is also uniformly continuous. This implies that
\begin{equation}\label{uniformconverxi}
    \lim_{n \rightarrow +\infty} \max_{k=1,\cdots N-1-OUT(p,n)} \left| \sqrt{\xi_{\phi}^p}\left(\dfrac{k+1}{N+1}\right)-\sqrt{\xi_{\phi}^p}\left(\dfrac{k}{N+1}\right) \right|=0.
\end{equation}
Inserting the limit \eqref{uniformconverxi} along with the estimate \eqref{eq:lemma-ditribution-eig-out} into inequality \eqref{divlt2} leads to 
\begin{equation}\label{limitT2}
    \lim_{n \rightarrow +\infty} T_2=0.
\end{equation}
We now, check the limit of the last term $T_3$. Using \eqref{eq:lemma-ditribution-eig-out}, we obtain
\begin{equation}\label{limitT21}
  \lim_{n \rightarrow +\infty}  \left|\sqrt{\xi_{\phi}^p}\left(\dfrac{N-OUT(p,n)}{N+1}\right)- \sqrt{n^{-2}\lambda_{N-OUT(p,n),h}^{\phi}}   \right|=0.
\end{equation}
From Corollary \ref{numberofoutlierscor}, we have $OUT(p,n)=o(N)$. Then 
\begin{equation}\label{limitoutlier}
   \lim_{n \rightarrow +\infty}\frac{N-OUT(p,n)}{N+1}=1. 
\end{equation}
Injecting \eqref{limitoutlier} in \eqref{limitT21}  and employing the continuity of $\sqrt{\xi^p_{\phi}}$, we get
$$\lim_{n \rightarrow +\infty} \sqrt{n^{-2}\lambda_{N-OUT(p,n),h}^{\phi}} = \sqrt{\xi^p_{\phi}}(1).$$
Observe that $\max Rg\left(\sqrt{\omega^p_{\phi}}\right)=\sqrt{\xi^p_{\phi}}(1)$. Then 
\begin{equation}\label{T3limit2}
  \lim_{n \rightarrow +\infty}  \left|  \sqrt{ n^{-2}\lambda_{N-OUT(p,n),h}^{\phi}}-\max Rg\left(\sqrt{\omega_\phi^p}\right)\right|=0.
\end{equation}
Taking the limit in \eqref{eq:gedis-4} and using the two limits \eqref{T3limit2} and  \eqref{limitT21}, we obtain 
\begin{equation}\label{limitT3fn}
    \lim_{n \rightarrow +\infty} T_3=0.
\end{equation}
Applying the limit in \eqref{eq:gedis-1} and incorporating the three derived limits \eqref{limitT1}, \eqref{limitT2}, and \eqref{limitT3fn} concludes the proof.

\end{proof}

In the following, we leverage the symbol's asymptotic behavior in the zero neighborhood to derive new estimates for the eigenfrequencies. The next result indicates an  equivalence between the eigenfrequency behavior near zero and the order of approximation in \eqref{eq:lemma-ditribution-eig-out}.
\begin{corollary}\label{equivalenceimportant}
    Let $\phi\in\mathbf{C}_{[0,1]}$, such that $\sqrt{n^{-2}L_{\phi,n}^p}\sim_{\lambda} \sqrt{\xi^p_{\phi}}$. Suppose there exists a constant $\beta\in \mathbb{R}$, such that for any subsequence $(k(n))\subseteq\mathcal{I}(p,n) $ satisfying  $\lim_{n \rightarrow +\infty}\frac{k(n)}{N+1}=0$, the following holds:  
    \begin{equation}\label{assumtiononeigenvaluesnear0}
        \lim_{n \rightarrow +\infty}\frac{\sqrt{n^{-2}\lambda_{k(n),h}^{\phi}}}{\gamma \frac{k(n)}{N+1}}=\beta.
    \end{equation}
    where $\gamma$ is given by \eqref{gammadif}. Then, the  following two statements are equivalent:
  \begin{enumerate}
    \item[(i)] $\beta=1.$
    \item[(ii)] $ \lim_{n \rightarrow +\infty}\;\max_{k \in \mathcal{I}(p,n)} \left\{\frac{N+1}{k}\left|\sqrt{n^{-2}\lambda_{k,h}^{\phi}}-\sqrt{\xi_{\phi}^p}\left(\frac{k}{N+1}\right)\right|\right\}=0.$
\end{enumerate}
\end{corollary}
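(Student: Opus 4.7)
The plan is to combine the asymptotic linearity of the symbol at zero from Theorem \ref{symbolnear0}, namely $\sqrt{\xi_{\phi}^p}(x)/x \longrightarrow \gamma$ as $x \to 0^+$, with the (unweighted) uniform approximation from Lemma \ref{approximationwithoutout}. The key algebraic observation is the identity
\[
\frac{N+1}{k}\left|\sqrt{n^{-2}\lambda_{k,h}^{\phi}}-\sqrt{\xi_{\phi}^p}\!\left(\tfrac{k}{N+1}\right)\right| = \left|\frac{\sqrt{n^{-2}\lambda_{k,h}^{\phi}}}{k/(N+1)} - \frac{\sqrt{\xi_{\phi}^p}(k/(N+1))}{k/(N+1)}\right|,
\]
which directly connects the weighted error in (ii) with the ratio appearing in the hypothesis \eqref{assumtiononeigenvaluesnear0}.

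For the implication $(i) \Rightarrow (ii)$, I would argue by contradiction: if the weighted maximum fails to tend to $0$, extract a subsequence $k(n) \in \mathcal{I}(p,n)$ along which this quantity is bounded below by some $\delta > 0$. Split according to the behaviour of $k(n)/(N+1)$. First, if some sub-subsequence satisfies $k(n)/(N+1) \geq \varepsilon > 0$, then $(N+1)/k(n) \leq 1/\varepsilon$ is bounded, and Lemma \ref{approximationwithoutout} forces the weighted expression to vanish, contradiction. Otherwise $k(n)/(N+1) \to 0$; applying the identity above, the second term tends to $\gamma$ by Theorem \ref{symbolnear0}, while the first term equals $\gamma \cdot \frac{\sqrt{n^{-2}\lambda_{k(n),h}^{\phi}}}{\gamma k(n)/(N+1)}$, which converges to $\gamma \beta = \gamma$ by hypothesis with $\beta=1$. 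The difference thus tends to $0$, again a contradiction.

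For the reverse implication $(ii) \Rightarrow (i)$, pick any admissible sequence $k(n) \in \mathcal{I}(p,n)$ with $k(n)/(N+1) \to 0$ (for instance $k(n)=1$, which lies in $\mathcal{I}(p,n)$ for large $n$ since there are only $o(N)$ outliers and $\sqrt{n^{-2}\lambda_{1,h}^{\phi}} \to 0$). Applying the same identity and the hypothesis (ii), the weighted error vanishes, so
\[
\frac{\sqrt{n^{-2}\lambda_{k(n),h}^{\phi}}}{k(n)/(N+1)} - \frac{\sqrt{\xi_{\phi}^p}(k(n)/(N+1))}{k(n)/(N+1)} \longrightarrow 0.
\]
Since the second term tends to $\gamma$ by Theorem \ref{symbolnear0}, the first term also tends to $\gamma$, and comparing with \eqref{assumtiononeigenvaluesnear0} yields $\beta = 1$.

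The main obstacle I expect is the first implication: one must handle uniformly in $k$ two very different regimes (indices comparable to $N$, where the weight is harmless but Lemma \ref{approximationwithoutout} is the only available tool; and indices with $k/(N+1) \to 0$, where the weight explodes but the linear asymptotic of the symbol precisely compensates). The dichotomy argument via contradiction and subsequence extraction is the cleanest way to glue both regimes; care is needed to verify that the extracted indices indeed remain in $\mathcal{I}(p,n)$ so that the hypothesis \eqref{assumtiononeigenvaluesnear0} applies.
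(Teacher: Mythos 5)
Your proposal is correct and follows essentially the same strategy as the paper's proof: contradiction plus subsequence extraction for $(i)\Rightarrow(ii)$, splitting into the regime where $k(n)/(N+1)$ stays bounded away from zero (where the weight is harmless and Lemma~\ref{approximationwithoutout} suffices) and the regime where $k(n)/(N+1)\to0$ (where the linearity of Theorem~\ref{symbolnear0} and hypothesis~\eqref{assumtiononeigenvaluesnear0} with $\beta=1$ combine), and then the direct identity-based computation for $(ii)\Rightarrow(i)$. The paper handles the non-vanishing regime by passing to a convergent sub-subsequence with limit $x_0\neq0$ and invoking the discrete Weyl law~\eqref{discretweylslaw}, whereas you bound the weight and apply Lemma~\ref{approximationwithoutout} directly, but these are equivalent routes and the structure of the argument is the same.
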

\begin{proof}
Assuming that (\ref{assumtiononeigenvaluesnear0}) holds, we proceed with the first implication, showing that $(i)$ implies $(ii)$ using proof by contradiction. Specifically, we assume $(i)$ holds and then suppose, for contradiction, that $(ii)$ is false. Then,  there exists a subsequence $((k(n))\subset \mathcal{I}(p,n)$, such that 
 \begin{equation}\label{contadict1}
     \frac{N+1}{k(n)}\left|\sqrt{n^{-2}\lambda_{k(n),h}^{\phi}}-\sqrt{\xi_{\phi}^p}\left(\frac{k(n)}{N+1}\right)\right|>\varepsilon,
 \end{equation}
 for some $\varepsilon>0$  independent of $n$.\\

 Since  $\left(\frac{k(n)}{N+1}\right)$ is bounded, there exists a subsequence, which we also denote by  $\left(\frac{k(n)}{N+1}\right)$, such that (\ref{contadict1}) holds and $$\lim_{n \rightarrow +\infty}\frac{k(n)}{N+1}=x_0\in[0,1].$$
By the continuity of $\sqrt{\xi^p_{\phi}}$ and (\ref{assymptoticatzero}), we obtain
 \begin{equation}\label{limitesofsymbol}
     \lim_{n \rightarrow +\infty}\frac{\sqrt{\xi_{\phi}^p}\left(\frac{k(n)}{N+1}\right)}{\frac{k(n)}{N+1}}=\left\{
\begin{array}{ll}
\frac{\sqrt{\xi_{\phi}^p}(x_0)}{x_0},&\quad x_0\neq 0,\\
\gamma,&\quad x_0=0.
\end{array}
\right.\end{equation}
Using the assumption (\ref{assumtiononeigenvaluesnear0}) along with the Discrete Weyl’s law (\ref{discretweylslaw}), we have
\begin{equation}\label{limitesoffreq}
     \lim_{n \rightarrow +\infty}\frac{\sqrt{n^{-2}\lambda_{k(n),h}^{\phi}}}{\frac{k(n)}{N+1}}=\left\{
\begin{array}{ll}
\frac{\sqrt{\xi_{\phi}^p}(x_0)}{x_0},&\quad x_0\neq 0,\\
\gamma,&\quad x_0=0.
\end{array}
\right.\end{equation}

By substituting  (\ref{limitesofsymbol}) and (\ref{limitesoffreq}) in (\ref{contadict1}),  we arrive at $0\geq \varepsilon$, which is a contradiction. This completes the proof of the first implication.

We now focus on establishing the second implication, namely, that $(ii)$ implies $(i)$. Let $(k(n))\subset \mathcal{I}(p,n)$, such that $\lim_{n \rightarrow +\infty}\frac{k(n)}{N+1}=0$. From $(ii)$, we have 
\begin{equation}\label{contadict2}
    \lim_{n \rightarrow +\infty} \frac{N+1}{k(n)}\left|\sqrt{n^{-2}\lambda_{k(n),h}^{\phi}}-\sqrt{\xi_{\phi}^p}\left(\frac{k(n)}{N+1}\right)\right|=0.
 \end{equation}
 On the other hand, 
 \begin{equation}\label{cond3}
     \frac{\sqrt{n^{-2}\lambda_{k(n),h}^{\phi}}}{\gamma \frac{k(n)}{N+1}}=\frac{1}{\gamma}\left[ \frac{\sqrt{n^{-2}\lambda_{k(n),h}^{\phi}}}{\frac{k(n)}{N+1}}-\frac{\sqrt{\xi_{\phi}^p}\left(\frac{k(n)}{N+1}\right)}{\frac{k(n)}{N+1}}\right]+\frac{1}{\gamma}\frac{\sqrt{\xi_{\phi}^p}\left(\frac{k(n)}{N+1}\right)}{\frac{k(n)}{N+1}}.
 \end{equation}
By taking the limit in the equality (\ref{cond3}) and using (\ref{contadict2}) along with the linear behavior of the symbol near zero (\ref{assymptoticatzero}), we obtain $\beta=1$, which concludes the proof.
\end{proof}
The next result gives an equivalence estimate to (\ref{eq:lemma-ditribution-eig-out}), illustrating the asymptotic equivalence between the sequence of eigenfrequencies and the sequence generated by the uniform sampling of the symbol, up to an additional sequence term.

\begin{corollary}\label{improvedfrequincyestimation}
    Let $\phi\in\mathbf{C}_{[0,1]}$, such that  $\sqrt{n^{-2}L_{\phi,n}^p}\sim_{\lambda} \sqrt{\xi^p_{\phi}}$. Then the estimation (\ref{eq:lemma-ditribution-eig-out}) is equivalent to 
    \begin{equation}\label{improved1}
        \lim_{n \rightarrow +\infty}\;\max_{k \in \mathcal{I}(p,n)} \left\{\frac{k}{N+1}\left|\frac{\sqrt{n^{-2}\lambda^{\phi}_{k,h}}}{\sqrt{\xi^p_{\phi}}\left(\frac{k}{N+1}\right)}-1 \right|\right\}=0.
    \end{equation}
\end{corollary}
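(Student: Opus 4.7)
The plan is to rewrite both estimates in a common form by introducing $a_k := \sqrt{n^{-2}\lambda_{k,h}^{\phi}}$ and $b_k := \sqrt{\xi_{\phi}^p}(k/(N+1))$. Using relation \eqref{relatioxiandPsi}, the quantity appearing in \eqref{eq:lemma-ditribution-eig-out} becomes $\max_{k \in \mathcal{I}(p,n)} |a_k - b_k|$, while the expression in \eqref{improved1} rewrites as
\[
\max_{k \in \mathcal{I}(p,n)} \rho_k \, |a_k - b_k|, \qquad \rho_k := \frac{k/(N+1)}{b_k}.
\]
Consequently, proving the equivalence of the two limits reduces to exhibiting positive constants $c, C$ (independent of $n$ and $k$) such that $c \leq \rho_k \leq C$ for every $n$ and every $k \in \mathcal{I}(p,n)$.

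The main obstacle is ruling out a blow-up of $\rho_k$ when $k/(N+1)$ is close to $0$, since $b_k \to 0$ in that regime. This is precisely where Theorem \ref{symbolnear0} enters: it gives $\sqrt{\xi_\phi^p}(x) \sim_{0^+} \gamma x$ with $\gamma > 0$, so the function $R(x) := x/\sqrt{\xi_\phi^p}(x)$, continuous and strictly positive on $(0,1]$ (by continuity and strict monotonicity of $\sqrt{\xi_\phi^p}$, vanishing only at $0$), admits the finite positive limit $1/\gamma$ at $0^+$. Extending $R$ continuously to $[0,1]$ with $R(0) := 1/\gamma$, compactness of $[0,1]$ yields constants $0 < c \leq C < \infty$ with $c \leq R(x) \leq C$ on $[0,1]$. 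Specialising to $x = k/(N+1) \in (0,1]$ gives the desired uniform two-sided bound on $\rho_k$.

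The conclusion then follows from the sandwich
\[
c \max_{k \in \mathcal{I}(p,n)} |a_k - b_k| \;\leq\; \max_{k \in \mathcal{I}(p,n)} \rho_k \, |a_k - b_k| \;\leq\; C \max_{k \in \mathcal{I}(p,n)} |a_k - b_k|,
\]
which forces the two maxima to tend to zero simultaneously, establishing the equivalence of \eqref{eq:lemma-ditribution-eig-out} and \eqref{improved1}.
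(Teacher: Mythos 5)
Your proposal is correct, and it packages the argument more cleanly than the paper does. The paper proves each direction separately by contradiction: it assumes one of the two limits fails, extracts a subsequence $(k(n))$ with $k(n)/(N+1)\to x_0\in[0,1]$, and uses the continuity of $\sqrt{\xi^p_\phi}$ on $(0,1]$ together with Theorem~\ref{symbolnear0} at $x_0=0$ to reach a contradiction with the other limit. Your route bypasses the subsequence extraction entirely by recognizing that the two maxima differ pointwise by the factor $\rho_k = R\!\left(\tfrac{k}{N+1}\right)$ with $R(x)=x/\sqrt{\xi^p_\phi}(x)$, and that $R$ extends to a continuous, strictly positive function on the compact interval $[0,1]$ (the extension at $0$ being exactly where Theorem~\ref{symbolnear0} is needed), hence is bounded away from $0$ and $\infty$. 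The resulting two-sided sandwich yields both implications at once. The underlying analytic ingredient is the same in both proofs --- the linear behavior of $\sqrt{\xi^p_\phi}$ at $0^+$ controls $\rho_k$ when $k/(N+1)$ is small --- but your version makes the uniform two-sided bound explicit and symmetric in the two estimates, which is both shorter and more transparent than the paper's case-by-case contradiction.
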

\begin{proof}
    We will establish only the first implication, namely, that (\ref{eq:lemma-ditribution-eig-out}) implies (\ref{improved1}). The second implication can be derived using similar steps. Let $n\in\mathbb{N}^{\ast}$, for all $k\in \mathcal{I}(p,n)$, we have 
    \begin{equation}\label{cond4}
        \frac{k}{N+1}\left|\frac{\sqrt{n^{-2}\lambda^{\phi}_{k,h}}}{\sqrt{\xi^p_{\phi}}\left(\frac{k}{N+1}\right)}-1 \right|=\frac{\frac{k}{N+1}}{\sqrt{\xi^p_{\phi}}\left(\frac{k}{N+1}\right)}\left| \sqrt{n^{-2}\lambda^{\phi}_{k,h}}-\sqrt{\xi^p_{\phi}}\left(\frac{k}{N+1}\right)\right|.
    \end{equation}

Now, assume for contradiction that (\ref{improved1}) is false. Then, following an analysis similar to that used in the proof of $(i)$ implies $(ii)$ in the above  corollary, we can construct a subsequence  $(k(n))\subset  \mathcal{I}(p,n)$, such that 
$$\lim_{n \rightarrow +\infty}\frac{k(n)}{N+1}=x_0\in[0,1]\;\;\text{and}\;\;\frac{k(n)}{N+1}\left|\frac{\sqrt{n^{-2}\lambda^{\phi}_{k(n),h}}}{\sqrt{\xi^p_{\phi}}\left(\frac{k(n)}{N+1}\right)}-1 \right|>\varepsilon>0,$$
for some $\varepsilon$  independent of $n$. Then, using  (\ref{cond4}), we obtain
$$\left| \sqrt{n^{-2}\lambda^{\phi}_{k(n),h}}-\sqrt{\xi^p_{\phi}}\left(\frac{k(n)}{N+1}\right)\right|>\varepsilon\frac{\sqrt{\xi^p_{\phi}}\left(\frac{k(n)}{N+1}\right)}{\frac{k(n)}{N+1}}.$$
By passing to the limit and using (\ref{eq:lemma-ditribution-eig-out}) and (\ref{limitesoffreq}), we get

$$
\left\{
\begin{array}{ll}
0\geq \varepsilon\frac{\sqrt{\xi^p_{\phi}(x_0)}}{x_0},& \text{if } x_0\neq 0\\
0\geq\varepsilon \gamma,& \text{otherwise, } x_0=0.
\end{array}
\right.
$$
 This results in a contradiction, concluding the proof of the first implication.

For the second implication, one can deduce that for all $k\in \mathcal{I}(p,n)$, we have
$$
\left| \sqrt{n^{-2}\lambda^{\phi}_{k,h}}-\sqrt{\xi^p_{\phi}}\left(\frac{k}{N+1}\right)\right|=\frac{\sqrt{\xi^p_{\phi}}\left(\frac{k}{N+1}\right)}{\frac{k}{N+1}}\left\{\frac{k}{N+1}\left|\frac{\sqrt{n^{-2}\lambda^{\phi}_{k,h}}}{\sqrt{\xi^p_{\phi}}\left(\frac{k}{N+1}\right)}-1 \right|\right\},
$$
and then apply the same techniques as above to establish that  (\ref{improved1}) implies (\ref{eq:lemma-ditribution-eig-out}), which ends the proof. 
\end{proof}

To conclude this section, we demonstrate that the term $(k/N+1)$ in the above estimate \eqref{improved1} can be eliminated if either condition $(i)$ or $(ii)$ in Corollary \ref{equivalenceimportant} is satisfied.
\begin{corollary}\label{improvedf}
Let $\phi\in\mathbf{C}_{[0,1]}$, such that  $\sqrt{n^{-2}L_{\phi,n}^p}\sim_{\lambda} \sqrt{\xi^p_{\phi}}$. Assume that $(i)$ in Corollary \ref{equivalenceimportant} holds. Then,
 \begin{equation}\label{improved2}
        \lim_{n \rightarrow +\infty}\;\max_{k \in \mathcal{I}(p,n)} \left|\frac{\sqrt{n^{-2}\lambda^{\phi}_{k,h}}}{\sqrt{\xi^p_{\phi}}\left(\frac{k}{N+1}\right)}-1 \right|=0.
    \end{equation}
\end{corollary}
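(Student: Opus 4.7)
The plan is to reduce the claim directly to assertion $(ii)$ of Corollary \ref{equivalenceimportant}, which, by the assumed $(i)$ and the equivalence proved there, is already at our disposal. To do so, I would start from the algebraic identity
\[
\left|\frac{\sqrt{n^{-2}\lambda^{\phi}_{k,h}}}{\sqrt{\xi^p_{\phi}}\left(\tfrac{k}{N+1}\right)}-1\right| \;=\; \frac{k/(N+1)}{\sqrt{\xi^p_\phi}\left(\tfrac{k}{N+1}\right)} \cdot \frac{N+1}{k}\left|\sqrt{n^{-2}\lambda^{\phi}_{k,h}} - \sqrt{\xi^p_{\phi}}\left(\tfrac{k}{N+1}\right)\right|,
\]
and then take the maximum over $k \in \mathcal{I}(p,n)$. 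The second factor, after taking the max, tends to $0$ by $(ii)$, so the whole task boils down to showing that the first factor is uniformly bounded in $n$ and $k$.

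For this uniform bound, I would introduce the auxiliary function $h(x):= x/\sqrt{\xi^p_\phi}(x)$ for $x \in (0,1]$ and argue it extends continuously to $[0,1]$. Continuity on $(0,1]$ follows from Proposition \ref{stefa} and the strict monotonicity of $\Psi^p_\phi$, which together ensure $\sqrt{\xi^p_\phi}$ is continuous and strictly positive away from $0$. The value at $0$ is delivered by Theorem \ref{symbolnear0}: the asymptotic linearity $\sqrt{\xi^p_\phi}(x) \sim \gamma x$ with $\gamma>0$ yields $h(x) \to 1/\gamma$ as $x \to 0^+$. Continuity on the compact interval $[0,1]$ then gives $M := \sup_{x \in [0,1]} h(x) < \infty$, and since $k/(N+1) \in (0,1)$ for every $k \in \mathcal{I}(p,n)$, the first factor in the identity above is majorized by $M$ uniformly in $n$ and $k$. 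Combining these two ingredients closes the argument.

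The main obstacle is precisely this uniform boundedness of $h$, which would fail without the asymptotic linearity of $\sqrt{\xi^p_\phi}$ near zero; if one only knew $\sqrt{\xi^p_\phi}(x) = o(1)$ at $0^+$, the ratio $x/\sqrt{\xi^p_\phi}(x)$ could diverge and the cancellation that removes the factor $k/(N+1)$ present in \eqref{improved1} would be lost. This explains why the present corollary genuinely relies on the hypothesis $(i)$ of Corollary \ref{equivalenceimportant} and, through it, on Theorem \ref{symbolnear0}; the remainder of the proof is routine bookkeeping with the equivalence established in Corollary \ref{equivalenceimportant}.
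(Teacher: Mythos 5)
Your proposal is correct, and it is a genuinely cleaner route than the one the paper takes. The paper (and its two preceding corollaries) proceeds by contradiction: one assumes \eqref{improved2} fails, extracts a subsequence $k(n)$ with $k(n)/(N+1)\to x_0$, and handles the two cases $x_0\neq 0$ and $x_0=0$ separately, the latter via assumption $(i)$ and Theorem~\ref{symbolnear0}. Your argument instead observes that, by Corollary~\ref{equivalenceimportant}, assumption $(i)$ already delivers $(ii)$, and then reduces the statement directly via the identity
\[
\left|\frac{\sqrt{n^{-2}\lambda^{\phi}_{k,h}}}{\sqrt{\xi^p_{\phi}}\left(\tfrac{k}{N+1}\right)}-1\right|
= h\!\left(\tfrac{k}{N+1}\right)\cdot\frac{N+1}{k}\left|\sqrt{n^{-2}\lambda^{\phi}_{k,h}}-\sqrt{\xi^p_{\phi}}\!\left(\tfrac{k}{N+1}\right)\right|,
\qquad h(x):=\frac{x}{\sqrt{\xi^p_\phi}(x)},
\]
together with the uniform bound $\sup_{[0,1]}h<\infty$, the latter being exactly where Theorem~\ref{symbolnear0} enters (continuous extension to $0$ with value $1/\gamma$, strict positivity of $\sqrt{\xi^p_\phi}$ on $(0,1]$ from Proposition~\ref{stefa} and \eqref{relatioxiandPsi}). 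Both approaches rest on the same two pillars --- the $(i)\Rightarrow(ii)$ direction of Corollary~\ref{equivalenceimportant} and the asymptotic linearity of the symbol near zero --- but your direct decomposition avoids the subsequence/compactness machinery entirely and isolates the role of Theorem~\ref{symbolnear0} into a single, transparent boundedness claim. This is a small but real gain in clarity, and it makes explicit why the factor $k/(N+1)$ present in \eqref{improved1} can be removed under hypothesis $(i)$.
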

\begin{proof}
The proof follows similar steps as in the proofs of Corollaries \ref{equivalenceimportant} and \ref{improvedfrequincyestimation}. We assume that \eqref{improved2} is false. Then, we use assumption $(i)$ in Corollary \ref{equivalenceimportant}, along with Theorem \ref{symbolnear0}, to handle the approximation when a subsequence $(k(n)/N+1)$ tends to zero. For brevity, the details are omitted.
\end{proof}

\section{Conclusions, further comments, and future work}\label{sec:conclusions}
We have shown that GLT theory can provide complex and novel insights into the distribution of eigenfrequencies and the effect of reparametrization on the eigenfrequency spectrum. Using the regularity of $\Psi^p_{\phi}$ established in \cite{bianchi2018spectral}, we developed an ordering relation among different mappings and their associated families of eigenfrequencies. Furthermore, we showed that this regularity leads to uniform convergence for the discrete Weyl’s law (Theorem \ref{th1}). Additionally, we illustrated how the convexity of $\Psi^p_{\phi}$ can be leveraged to shift the eigenfrequency. Moreover, we established a linear asymptotic behavior of the symbol $\sqrt{\xi^p_{\phi}}$ near zero, enabling us to derive new estimations for eigenfrequencies.

The analysis presented in this study is related to the IGA discretization of the Laplace operator, utilizing regular B-splines. Throughout, our investigation of eigenfrequency distribution reveals that our analysis is independent of the specific discretization method and the particular operator under consideration. What remains crucial is the matrix symbol resulting from a numerical discretization of any given operator, and in this direction, the GLT theory is very flexible,  as already emphasized in the seminal paper [\cite{capizzano2003generalized}, pp. 376-377, Remark 2.1, Remark 2.2, Section 6, Theorem 6.2, Corollary 6.3, Corollary 6.4, Remark 6.4]. Consequently, we can extend the same study to finite difference discretization, higher-order Lagrangian finite element approximation, and optimal spline subspaces \cite{manni2022application}.

To derive the estimations in both Corollary \ref{equivalenceimportant} and Corollary \ref{improvedf}, we assumed that the eigenfrequencies exhibit the same behavior as the symbol near zero, which can be seen as a hypothesis equivalent to assumption (i) in [Theorem 3.3, \cite{bianchi2021analysis}]. In future work, we aim to investigate and formally establish this assumption. Furthermore, in Theorem \ref{tdis1}, we have avoided the intersection points of the functions $\Psi^p_{\phi_1}$ and $\Psi^p_{\phi_2}$; studying the behavior of eigenfrequencies at these points would be interesting. Moreover, in the alternative theorems to Theorem \ref{tdis1}, namely Theorems \ref{dis1} and \ref{dis2}, we have provided only a subset of reparametrizations that allow ordering of eigenfrequencies within a portion of $Rg\left(\sqrt{\omega^p_{\phi}}\right)$.  Further investigation into the eigenfrequency behavior in the rest of the range, along with the construction of a reparametrizations subset that permits ordering across the entire range, remains necessary.

Lastly, it is worth noting that all the findings presented in this paper, which encompass the improvement of the results in \cite{bianchi2021analysis}, along with our analysis of eigenfrequencies distribution, can be extended to domains of higher dimensions (see \cite{serra2006glt}, \cite{barbarino2020block},  and \cite{garoni2017generalized2}).

\section*{Acknowledgments}
The authors are very grateful to Professor Stefano Serra Capizzano for dedicating his time and showing interest in this work. We deeply appreciate the insightful discussions and his invaluable remarks and corrections, which greatly improved the quality of the present paper.

\end{document}